\subjclass[2020]{ 37F34, 37C15, 37A05, 37A25, 37C30, 37D20, 37E05, 46F05, 26A16} 
\keywords{ distribution, Birkhoff sums, deformation, topological classes, piecewise expanding maps}
\title[Birkhoff sums as distributions II: Deformations ]{Birkhoff sums as distributions II:\\ {\small Applications to deformations of dynamical systems}}
\author[C. Grotta-Ragazzo]{Clodoaldo Grotta-Ragazzo}
\address{ Instituto de Matem\'atica e Estat\'istica,  Universidade de S\~ao Paulo, Rua do Mat\~ao, 1010, Cidade Universit\'aria, S\~ao Paulo-SP, CEP 05508-090, Brazil }
\email{ragazzo@usp.br} 
\author[D.  Smania]{Daniel Smania}
\address{Departamento de Matem\'atica, Instituto de Ci\^encias Matem\'aticas e de Computa\c{c}\~ao (ICMC), Universidade de S\~ao Paulo (USP), Avenida Trabalhador S\~ao-carlense, 400, S\~ao Carlos-SP, CEP 13566-590,  Brazil.}
\email{smania@icmc.usp.br} 
\urladdr{\url{https://sites.icmc.usp.br/smania/}}
\thanks{C.G.R. is partially supported by FAPESP, Brazil grant 2016/25053-8. D.S. was partially supported by CNPq 306622/2019-0, CNPq 307617/2016-5, CNPq Universal 430351/2018-6 and FAPESP Projeto Tem\'atico 2017/06463-3.}
\newtheorem{theorem}{Theorem}[section]
\newtheorem{corollary}[theorem]{Corollary}
\newtheorem{lemma}[theorem]{Lemma}
\newtheorem{proposition}[theorem]{Proposition}
\theoremstyle{definition}
\newtheorem{remark}[theorem]{Remark}
\renewcommand{\theequation}{\thesection.\arabic{equation}}
\newcommand\numberthis{\addtocounter{equation}{1}\tag{\theequation}} %numeral asteristico
\newcommand{\secdot}[1]{\arabic{#1}}
\newcommand{\Cll}[2][normal]{\Cl[#1]{#2}}
\newcommand{\Crr}[1]{\Cr{#1}}
\newcounter{change}
\providecommand\@dotsep{5}
\renewcommand{\listoftodos}[1][\@todonotes@todolistname]{%
  \@starttoc{tdo}{#1}}
\begin{document}

\begin{abstract}  Often topological classes  of one-dimensional dynamical systems are finite codimension smooth manifolds.   We describe a method to prove this sort of statement  that we believe can be applied in many settings.  In this work we   will  implement it for  piecewise expanding maps. The most important step  will be  the identification of  infinitesimal deformations with primitives of Birkhoff sums (up to addition of a Lipschitz function), that allows us to use the ergodic  properties  of piecewise expanding maps to study the regularity of infinitesimal deformations. \end{abstract}

\maketitle

\setcounter{tocdepth}{1}
\tableofcontents

%to show list of changes, uncomment next line
%\listoftodos[LIST OF CHANGES IN THIS VERSION]

%\addtocontents{toc}{~\hfill\textbf{Page}\par}

%\changei{We changed the abstract a little bit.}

\section{Introduction}

One of the most interesting features of one-dimensional dynamics, either real or complex,  is that \\

\noindent {\it Often topological classes  of one-dimensional dynamical systems are smooth manifolds with finite codimension. } \\

So we can consider  topological classes as an (infinite-dimensional) Teichm\"uller space of those dynamical systems (see Gardiner and  Lakic \cite{gl} for information on classical Teichm\"uller spaces).  Given a dynamical system $f$, there are many ways to represents this topological object, with distinct geometries.  We can ask how the geometrical properties of these representations change when we deform them; that is when we consider a smooth curve inside the topological class.  We can, for instance, study the impact of deformations on periodic orbits, the Hausdorff dimension of sets, and  invariant measures (the so-called linear response problem. See, for example, Baladi \cite{lr}).

This fact is a quite useful tool to study these dynamical systems. This is  a well-developed approach in the study of {\it complex} one-dimensional dynamical systems, as   rational (and especially polynomial) maps and (quasi-)Fuchsian groups (Teichm\"uller theory).  The introduction of  holomorphic motions by Ma\~ne, Sad and Sullivan \cite{mss}  had  a significant impact  in this field. The nonexistence of rational maps with wandering domains  established by Sullivan \cite{s2} relies on  deformation methods. See also the Teichm\"uller space of rational maps by McMullen and  Sullivan \cite{msu}. In both cases, the space of deformations is finite-dimensional. 

 One of the main results on (infinite-dimensional) deformations of dynamical systems  is the study of topological classes (more precisely, hybrid classes) of quadratic-like maps by M. Lyubich \cite{lyubich} and real-analytic unimodal maps by Avila, Lyubich and de Melo \cite{alm}, which were important to the study of renormalization  and the typical behavior of such maps.  There are also recent results for topological classes of real-analytic multimodal maps by Clark and van Strien \cite{cstt}. All those results rely heavily on complex methods. 

There are also  related results for real analytic circle diffeomorphisms by E. Risler \cite{risler} (see also Goncharuk and Yampolsky \cite{nm}), generalized interval exchange transformations by Marmi, Moussa and Yoccoz \cite{mmy}, dissipative gap mappings by Clark and Gouveia \cite{cg},  piecewise expanding unimodal maps by Baladi and S. \cite{smooth},  and  piecewise Mo\"ebius circle  diffeomorphisms with a break by Khanin and Teplinsky \cite{tt}, but one can expect that topological classes are finite codimension  smooth manifolds in many  settings in one-dimensional dynamics. Most of these works are deeply connected with renormalization theory, once  in many settings  the {\t local stable manifolds} of the omega-limit set  of renomalization operator {\it are}  topological classes.

 We also refer to the  universal Teichm\"uller space (see the survey by Gardiner and Harvey \cite{gh}), the study of the manifold structure of normalized potentials by Giulietti,  Kloeckner,  Lopes,  and Marcon \cite{giu} and rigidity conjectures by Martens,  Palmisano and Winckler \cite{mp} and Winckler \cite{w}.

One may ask  if there is an {\it unified  way} to study the smoothness and finite codimension of the topological classes in one-dimensional dynamics.  Inspired by  previous works by Lyubich \cite{lyubich} and  Baladi and S.  \cite{smooth}\cite{alternative}  we will describe a method that we believe is quite general. In this work,  we   will  implement it for the class  of piecewise expanding maps.

Maps with discontinuities, non analytic critical points and/or  Lorenz-like singularities have been  quite resistant   to  complex dynamics  methods. On the other hand, the ergodic theory of those maps had a massive development in the last decades. One of the main distinctions of this work is the use of purely real methods, where {\it ergodic theory}   will play a surprisingly new and crucial role. 

\part{Heuristic of the method}

\section{Abstract nonsense results} By {\it abstract nonsense results} one must understand that the arguments here can be carried out for every  class of one-dimensional dynamical systems where assumptions A. and B. below holds. We only use soft arguments that one can adapt with minimal modifications in many settings.  Suppose we have a smooth family of (piecewise) smooth  dynamical systems $f_t\colon X \rightarrow X$, with $t$ in an open subset $O$ of some vector space, with $0\in O$. The phase space $X$ is a one-dimensional manifold (perhaps with borders). 

 Suppose that $f_t$ belongs to the topological class of $f_0$ for every $t$. We call $f_t$ a {\it smooth deformation} of $f_0$. That is, there is a family of homeomorphisms $h_t$ such that 
\begin{equation}\label{conj} h_t\circ f_0 = f_t \circ h_t\end{equation}
 Assume that this smooth deformation $f_t$ satisfies \\

\noindent {\bf Assumption A (smooth motions)}  For each  $x\in X$ we have that 
$$t\mapsto h_t(x)$$ 
is differentiable. \\

So we can derive (\ref{conj}) with respect to $t$ to obtain
\begin{equation}\partial_t h_t\circ f_0 =\partial_t f_t \circ h_t + \partial_x f_t \circ h_t \cdot \partial_t h_t. \end{equation} 
that is, applying $h_t^{-1}$ on the right 
\begin{equation}\label{tce} v_t=\alpha_t \circ f_t  -Df_t \cdot  \alpha_t(x)\end{equation}
for every t. Here $v_t=\partial_t f_t$ and $\alpha_t=  \partial_t h_t\circ h_t^{-1}.$ 

This suggest that if a function $v$ belongs to the tangent space of the topological class of a map $f$ the there is a solution $\alpha$ for the {\bf  twisted cohomological equation} 
\begin{equation}\label{tce2} v=\alpha \circ f  -Df \cdot  \alpha\end{equation}

The function $\alpha$ will be called an {\it infinitesimal deformation}  of $f$.  Note also  that $h_t$ satisfies the initial value problem 
\begin{equation}\label{ode}   \begin{cases} \partial_t h_t (x)=  \alpha_t(h_t(x)),\\
                                          h_0(x)=x.  \end{cases}\end{equation}
It is remarkable that one can often {\it solve} (\ref{tce}) without knowing $h_t$. Indeed note that the series 
$$\alpha(x)=-\sum_{i=0}^\infty \frac{v(f^i(x))}{Df^{i+1}(x)}$$
is a {\it formal} solution of (\ref{tce2}). Baladi and S. \cite{bs0}\cite{smooth} proved that for piecewise expanding unimodal maps  this series indeed converges to a H\"older solution. We are going to improve that showing  that $\alpha$ is indeed Log-Lipschitz  in the piecewise expanding setting (see Theorem \ref{infc}).  When dealing with maps with critical points, as Collet-Eckmann maps, this formal series does not converge everywhere, but nevertheless one can implement an inducing scheme to obtain  a continuous solution $\alpha$ (See Baladi and S.  \cite{bs3}).

\begin{remark} Assumption A seems to be quite strong. However, it is well-known in complex dynamics that for complex-analytic families of complex analytic maps with no bifurcations, the map $t\mapsto h_t(x)$  is {\it holomorphic } for each $x$, that is, a {\it holomorphic motion}, that has been an essential tool to study such dynamical systems since its introduction in the study of rational maps by Ma\~ne, Sad and Sullivan \cite{mss}, and in particular in the study of deformations of quadratic-like maps in Lyubich \cite{lyubich}.  The use of {\it Beltrami paths} to construction deformations of complex dynamical systems is a quite popular way to built deformations satisfying Assumption A. It is remarkable that Assumption A. also holds for smooth deformations of real maps with finite smoothness. Indeed Baladi and S.\cite{smooth} proved that Assumption A. holds for smooth families of piecewise expanding unimodal maps, and we will prove that it holds for smooth families of piecewise expanding maps (See Theorem \ref{char}).
\end{remark}

We started with a smooth family in the topological class of $f_0$ and verified that the twisted cohomological equation  (\ref{tce2}) for its tangent vector $v=\partial_t f_t|_{t=0}$ has a solution $\alpha$. We would like to do the {\it reverse argument}.  \\

Suppose $f_t$ is a smooth family,  such that (\ref{tce}) has a solution $\alpha_t$ for every $t$. Then we can consider the initial value problem (\ref{ode}). If {\it $\alpha_t$ is regular enough}, this problem is uniquely integrable and it defines a flow $h_t$. One can see that due (\ref{tce}) that   $h_t\circ f_0$ and $f_t\circ h_t$ are both solution of the initial value problem
\begin{equation*}\label{ode2}   \begin{cases} \dot{y}=  \alpha_t(y),\\
                                          y(0)=f_0(x).  \end{cases}\end{equation*}
and consequently $h_t\circ f_0=f_t\circ h_t$, so $h_t$ is a conjugacy between $f_t$ and $f_0$ and $f_t$ belongs to the topological class of $f_0$ for every $t$. 

So the reverse argument needs the following assumption on the smooth family $f_t$    \\                                    
                                          
\noindent {\bf Assumption B (unique integrability)}  If $\alpha_t$ are solutions of  (\ref{tce2}) then the  ordinary differential equation $$\dot{y}=\alpha_t(y)$$ is uniquely integrable. \\

Note that we {\it do not } assume that the smooth family $f_t$ is a deformation, but we {\it conclude} this from Assumption B. \\ 

\begin{remark} For complex analytics dynamical systems  we often have that $\alpha_t$ are {\it quasiconformal vector fields}, that implies the unique integrability (See McMullen \cite{mcmullen}). Baladi and S. \cite{smooth} showed that for families of piecewise expanding unimodal maps $f_t$ the unique integrability of this  o.d.e. holds provided there  are {\it continuous solutions}  $\alpha_t$. \end{remark}

This heuristic suggests that \\

\noindent {\it   If the topological class of map $f$ is a smooth manifold, then the vectors $v$ in the tangent space of $f$ are those that admits solutions $\alpha$ of the twisted cohomological equation (\ref{tce2})  that are regular enough to warranty that the ordinary differential  equation (\ref{ode}) is uniquely integrable. Moreover, if the topological class has finite codimension $d$ then the  subspace of vectors $v$ that admits such regular solutions has codimension $d$.}

\section{Conditions for unique integrability} \label{rem1} In the setting of this work the unique integrability of  (\ref{ode})  does not follow from the usual Picard-Lindelöf-Cauchy-Lipschitz theorem since typically  the solutions $\alpha$ on (\ref{tce2}) are not Lipchitz functions even when $f$ and $v$ are very smooth. 
For instance, if  $f$ is a $C^\infty$ (or even analytic)  expanding map of the circle and $v \in C^\infty$ (or even analytic), we have that the solution $\alpha$ is often {\it nowhere differentiable} and indeed  it is not Lipchitz on any subset of the circle with positive one-dimensional Haar measure. See de Lima and S.  \cite{central}. \\

There are two ways to obtain unique integrability. The first is to use a property similar to the sensibility of the initial conditions of $f_t$. This was done in Baladi and S. \cite{smooth} for piecewise expanding unimodal maps. The downside of  this approach is that it does not give us  information about the regularity of the conjugacies $h_t$. The second approach, that we adopted here, is to prove that the solution $\alpha$ satisfies the Osgood condition \cite{osgood}, which implies unique integrability. \\

Indeed we are going to show that in the piecewise expanding setting $\alpha$ is Log-Lipschitz, which implies Osgood condition. See Theorem \ref{infc}.  Moreover, the Log-Lipschitz continuity of $\alpha$  implies that the conjugacy $h_t$ and its inverse are  $1-O(|t|)$-H\"older continuous by a result by Chemin \cite{chemin}, who was interested in the regularity of the flow generated by vectors that are weak solutions of the $2D$ Euler equations. Indeed there are even more recent results relating the modulus of continuity of $\alpha$ and the modulus of the continuity of the flow (See Kelliher \cite{kelliher}). If $\alpha_t$ is Zygmund, then the conjugacies are $1+O(|t|)$-quasisymmetric by Riemann \cite{reimann}. See Table 1.

{\small
\begin{table*}[h]
    \centering
        \begin{tabular}{c c c c}
            \toprule
            \midrule
               % & & \multicolumn{5}{c}{Predicted valence }\\ \cmidrule{3-7}
                 {\tiny Regularity of $\alpha_t$ } & \multirow{1}{*}{\parbox{1.8cm}{\tiny unique  \\ integrability?} } & \multirow{1}{*}{\parbox{1.8cm}{\tiny Regularty of  \\ $h_t$} }& \multirow{1}{*}{\parbox{1.8cm}{\tiny  Regularity   \\ for small $|t|$ }}  \\ \cmidrule{1-4}
                             \multicolumn{1}{l}{\tiny Continuity}  & 
                \multicolumn{1}{l}{\tiny Not Always}&   &  \\
                  \cmidrule{1-4} 
                                      \multicolumn{1}{l}{\tiny $\beta$-H\"older, $\beta\in (0,1)$}  &
                      \multicolumn{1}{l}{\tiny Not Always} & &    \\
                    \cmidrule{1-4}
                                        \multicolumn{1}{l}{\tiny Log-Lipschitz}    &
                \multicolumn{1}{l}{\tiny Yes}& {\tiny H\"older}  & {\tiny $\frac{|h_t(x+\delta)-h_t(x)|}{|\delta|^{1-O(|t|)}}=1+O(|t|)$}   \\
                  \cmidrule{1-4}
                                        \multicolumn{1}{l}{\tiny Zygmund}    &
                \multicolumn{1}{l}{\tiny Yes} & {\tiny Quasisymmetric}  & {\tiny $\frac{|h_t(x+\delta)-h_t(x)|}{|h_t(x-\delta)-h_t(x)|}=1+O(|t|)$ } \\
                                 \cmidrule{1-4}
                                           \multicolumn{1}{l}{\tiny Lipschitz}    &
                \multicolumn{1}{l}{\tiny Yes}&  {\tiny Lipschitz}  & {\tiny $\frac{|h_t(x+\delta)-h_t(x)|}{|\delta|}=1+O(|t|)$}  \\
            \midrule
            \bottomrule
        \end{tabular}
        \centering
        \vspace{3mm}
        \caption{Regularity of $\alpha_t$ versus regularity of conjugacies. }
        \label{tab:sam_count}
\end{table*}
%\end{sidewaystable}
}

\begin{remark} For piecewise expanding maps,  the conjugacies $h_t$ are typically  not very regular. Indeed, Shub and Sullivan \cite{ss} show that if  the conjugacy between two expanding maps on the circle is absolutely  continuous, then it is indeed smooth. In particular, the conjugacy must preserve the multipliers of the periodic points. Since it is easy to see that there are deformations $f_t$  that do not preserve multipliers, $h_t$ is rarely absolutely continuous.  A large class of unimodal maps have similar properties (See Martens and de Melo \cite{mm}).  This, in particular, suggests that Lipschitz regularity of  $\alpha_t$ is quite rare for one-dimensional maps  with many periodic points. \end{remark} 

\section{The heart of the paper: Derivatives of infinitesimal conjugacies} 

So the  crucial step in the above  method demands the  study of  the existence and regularity of the solution $\alpha$ for (\ref{tce2}).  Remark \ref{rem1} tells us we can not expect $\alpha$ to be very regular. However  we can {\it formally} derive (\ref{tce2}) to obtain
\begin{equation*} Dv= D\alpha\circ f  \cdot Df -  D^2f\cdot \alpha - Df\cdot D\alpha \end{equation*} 
so $D\alpha$ satisfies the Livsic cohomological equation
\begin{equation}\label{livsic}  \frac{Dv+ D^2f\cdot \alpha}{Df}  = D\alpha\circ f - D\alpha. \end{equation} 
If we denote 
\begin{equation}\label{phi}  \phi =  \frac{Dv+  D^2f\cdot \alpha}{Df} \end{equation} 
then we can {\it formally} solve (\ref{livsic}) taking 
\begin{equation} \label{solliv} D\alpha =- \sum_{i=0}^\infty \phi\circ f^i.\end{equation} 
We are going to see that one can make this argument rigorous in the one-dimensional piecewise expanding setting. 
We will prove that the derivative of $\alpha$ is indeed a Birkhoff sum {\it in the sense of distributions} (up to the addition of a bounded function) and this will allows us to study the regularity of $\alpha$. See Theorem \ref{tt}, it is heart of this work. 

\section{Perturbation  of lyapunov exponents} \label{lya}  The role of Birkhoff  sums of  the observable $\phi$ in (\ref{phi}) is clarified when we study the perturbation of the lyapunov exponent along orbits in a deformation $f_t$ of $f_0$. If $h_t\circ f_0 = f_t\circ h_t$ then for a given $x$ we have that $h_t(x)$ is a "smooth" continuation of $x$ and we can see that
\begin{align*} \partial_t \ln |Df^k_t(h_t(x))|\Big|_{t=0}&= \sum_{j< k} \partial_t \ln |Df_t(h_t(f_0^j(x)))|\Big|_{t=0}\\
&= \sum_{j< k}  \phi(f_0^j(x)).\end{align*} 

\part{Piecewise expanding maps} 

\section{The class of piecewise expanding maps}

Let $I=[a,b]$ and  $C=\{  c_0, c_1, \dots, c_n\} \subset [a,b]$ be such that $a=c_0< c_{i} <c_{i+1}< c_n=b$ for every $i< n-1$. Denote by $\mathcal{B}^k(C)$, with $k\in \mathbb{R}$, $k\geq 0$, the  space of all functions 
$$w\colon \bigcup_{i=0}^{n-1}(c_i,c_{i+1})\rightarrow \mathbb{R}$$
such that for  each $i< n-1$ we have that $w\colon (c_i,c_{i+1})\rightarrow\mathbb{R}$ extends to a $C^m$ function  in $[c_i,c_{i+1}]$, where $k=m+\beta$, with $n\in \mathbb{N}$ and $\beta\in [0,1)$ and, in the case $\beta\neq 0$, a  $\beta$-H\"older $n$th derivative.   We can endowed $\mathcal{B}^k(C)$  with the norm
$$|w|_k= \sup_{i< n}  \sum_{j\geq m}  |D^j w|_{L^\infty[c_i,c_{i+1}]}+ \sup_{\substack{x,y\in [c_i,c_{i+1}]\\ x\neq y}} \frac{|D^mf(x)-D^mf(y)|}{|x-y|^\beta}.$$
We have that $\mathcal{B}^k(C)$ is a Banach space. Denote by  $\mathcal{B}^k_{exp}(C)$, with $k\geq 1$,  the set of all $f\in  \mathcal{B}^k(C)$ such that the range of $f$ is contained in $[a,b]$ and $\inf |Df|> 1$.  This is a convex subset of  $\mathcal{B}^k(C)$. Note that if $f \in \mathcal{B}^k_{exp}(C)$ then $f^i \in \mathcal{B}^k_{exp}(C_i)$ for some finite set $C_i$ that depends on $f$.

Let 
$$\hat{I}= (a,b) \times \{+,-\} \ \cup \{ (c_i,+),(c_i,-)\colon 1\leq i\leq n-1   \}.$$
To simply the notation we will use $x^+$ instead of $(x,+)$ and $x^-$ instead of $(x,-)$.

Suppose that $f\in \mathcal{B}^k(C)$ is  piecewise monotone function on each interval $(c_i,c_{i+1})$. This is the case for $f\in \mathcal{B}^k_{exp}(C)$. Then we can extend it  to a function 
$$f\colon \hat{I}\rightarrow \hat{I}$$
using the lateral limits of $f$. For instance we define $f(a^+)=y^-$ when $$\lim_{x\rightarrow a^+}f(x)=y^-.$$
A function $v\in \mathcal{B}^k(C)$ can be extended to a function
$$v\colon \hat{I}\rightarrow \mathbb{R}$$
as $v(a^+)=\lim_{x\rightarrow a^+} v(x)$ and $v(a^-)=\lim_{x\rightarrow a^-} v(x)$. Note that if $f\in \mathcal{B}^k_{exp}(C)$ then $D^if\in \mathcal{B}^{k-i}(C)$ for every $i\leq k$.

We say that $f,g \in \mathcal{B}^k_{exp}(C)$ are topologically conjugated by  a homeomorphism $h\colon I\rightarrow I$ if $h \circ  f = g\circ h$ on  $\hat{I}$. Note that the conjugacy $h$ is always H\"older continuous (Buzzi \cite{buzzi} has an elegant proof of this for an even more general setting).

\subsection{Transfer operator and Lasota-Yorke inequality}\label{layo}  Let $f\in \mathcal{B}^2_{exp}(C)$ and $L$ be the transfer operator of $f$ with respect to the Lebesgue measure $m$ on $I$. A well-known result by Lasota and Yorke \cite{ly} tell us that 

\begin{itemize}
\item{\bf (Lasota-Yorke inequality in BV)}  There exist  $\Cll{b}$ and $\Cll[c]{c}$ such that  
\begin{equation}\label{ly} |L \gamma|_{BV}\leq    \Crr{c} |\gamma|_{BV}+ \Crr{b}|\gamma|_{L^1},\end{equation}
\end{itemize} 
So 
\begin{equation}\label{lyy} |L^i \gamma|_{BV}\leq  \Cll{ff} \Crr{c}^i |\gamma|_{BV}+ \Cll{0}|\gamma|_{L^1}. \end{equation}
for every $i$. Denote 
$$\Lambda_f = \{ \lambda\in \mathbb{S}^1\colon \ \lambda \in \sigma(L)  \},$$
where $ \sigma(L)$ denotes  the spectrum of $L$ acting on  $BV$. Then  $\Lambda_1$ if finite,$1\in \Lambda_1$  and 
\begin{equation}\label{ii}  L = \sum_{\lambda\in \Lambda_1}   \lambda \Phi_\lambda   +    K.\end{equation}
Here $\Phi_\lambda^2=\Phi_\lambda$, $\Phi_\lambda \Phi_{\lambda'}=0$ if $j\neq j'$ and $K\Phi_\lambda=\Phi_{\lambda}K=0$. Furthermore 
\begin{itemize}
\item[i.]  $\Phi_\lambda$ is  finite rank projection  and it  has a extension as a  bounded linear transformation $\Phi_\lambda\colon L^1(m)\rightarrow BV$, 
\item[ii.] $K$ is a bounded linear operator in $BV$ whose  spectral radius  is smaller than one, that is, there is $\Cll[c]{cpf}  \in (0,1)$ and $\Cll{cd}$ satisfying 
$$|K^n(\phi)|_{BV}\leq \Crr{cd}\Crr{cpf}^n |\phi|_{BV}.$$
\item[iii.]  there exits  $p=p(f)\in \mathbb{N}^\star$ such that for all  $\lambda\in \Lambda_1$ we have $\lambda^p=1$. \\
\end{itemize} 

Let $\Cll[c]{cmin}= sup_{x\in I} |Df(x)|^{-1}.$ We will also need
\begin{lemma} \label{bvpartition} There exist $\Cll{dist} >0$ and $\Cll{distp} > 0$  such that  for all $n\in \mathbb{N}$ and $J\in \mathcal{P}^n$  
\begin{equation}\label{ddis}   \frac{1}{ \Crr{dist}} \leq  \frac{ Df^n(x)}{Df^n(y)}\leq \Crr{dist}\end{equation} 

\begin{equation}\label{ddis2}  |\ln  |Df^n(x)|  - \ln |Df^n(y)| |\leq \Crr{distp}|f^n(x)-f^n(y)|.\end{equation} 
for every  $x,y\in J$. Furthermore  if $\gamma$ is a function with  bounded variation and its support is contained in $J$ we have 
\begin{equation}\label{bvest}  v(L_F^n(\gamma))\leq \frac{\Crr{dist} }{|DF^n(x)|} \Big( v(\gamma)+  \Crr{distp} |I|   |\gamma|_{L^\infty(m)}\Big),\end{equation} 
that for every $x\in J$ , where $v(g)$ is  the variation of $g$. 
\end{lemma}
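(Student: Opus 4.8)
The plan is to derive the lemma from the classical Rényi--Lasota--Yorke one-branch computation; the only inputs are that $f$ is $C^2$ on each $(c_i,c_{i+1})$ (because $f\in\mathcal{B}^2_{exp}(C)$) and that it is uniformly expanding, $\inf|Df|=\Crr{cmin}^{-1}>1$. I would prove the distortion estimates \eqref{ddis}--\eqref{ddis2} first and then deduce \eqref{bvest} from them together with the product rule for functions of bounded variation. The case $n=0$ is trivial, so one fixes $n\ge 1$ and $J\in\mathcal{P}^n$, recalling that then $f^n|_J$ is a $C^2$ homeomorphism onto its image and that each iterate $f^j(J)$, $0\le j<n$, lies in a single element of $\mathcal{P}^1$, i.e.\ a single monotonicity interval of $f$.

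For the distortion, I would start from the fact that $\log|Df|$ has derivative $D^2f/Df$, which is bounded on each $(c_i,c_{i+1})$; hence $\log|Df|$ is $\Lambda_0$-Lipschitz on every element of $\mathcal{P}^1$, where $\Lambda_0:=\sup_i\sup_{(c_i,c_{i+1})}|D^2f/Df|<\infty$. Since $f^j(J)$ sits in one such interval for $j<n$, the chain rule gives $\log|Df^n(z)|=\sum_{j=0}^{n-1}\log|Df(f^j(z))|$ on $J$. Uniform expansion enters via the observation that $f^{n-j}$ is monotone on $f^j(J)$ with $|Df^{n-j}|\ge\Crr{cmin}^{-(n-j)}$, which yields, for $x,y\in J$,
\[
|f^j(x)-f^j(y)|\le \Crr{cmin}^{\,n-j}\,|f^n(x)-f^n(y)|,\qquad |f^j(J)|\le \Crr{cmin}^{\,n-j}\,|f^n(J)|\le \Crr{cmin}^{\,n-j}\,|I|.
\]
Combining the two displays and summing the geometric series $\sum_{0\le j<n}\Crr{cmin}^{\,n-j}\le\Crr{cmin}/(1-\Crr{cmin})$ gives
\[
\big|\log|Df^n(x)|-\log|Df^n(y)|\big|\ \le\ \Lambda_0\sum_{j=0}^{n-1}\Crr{cmin}^{\,n-j}\,|f^n(x)-f^n(y)|\ \le\ \frac{\Lambda_0\Crr{cmin}}{1-\Crr{cmin}}\,|f^n(x)-f^n(y)|,
\]
which is \eqref{ddis2} (note $Df^n$ has constant sign on $J$, so the logarithms are legitimate); bounding $|f^n(x)-f^n(y)|\le|I|$ and exponentiating gives \eqref{ddis} with $\Crr{dist}=\exp\!\big(\tfrac{\Lambda_0\Crr{cmin}}{1-\Crr{cmin}}|I|\big)$.

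For \eqref{bvest}, write $g:=1/|Df^n|=e^{-h}$ with $h:=\log|Df^n|$ on $J$. From $Dg=-e^{-h}Dh$ one gets $\operatorname{Var}_J(g)\le(\sup_J g)\operatorname{Var}_J(h)$, and by the chain rule and the estimates above $\operatorname{Var}_J(h)=\int_J|D\log|Df^n||\le\Lambda_0\sum_{j=0}^{n-1}\int_J|Df^j|=\Lambda_0\sum_{j=0}^{n-1}|f^j(J)|\le\tfrac{\Lambda_0\Crr{cmin}}{1-\Crr{cmin}}|I|$, so that, using \eqref{ddis}, $\operatorname{Var}_J(1/|Df^n|)\le\tfrac{\Lambda_0\Crr{cmin}}{1-\Crr{cmin}}|I|\cdot\Crr{dist}/|Df^n(x)|$. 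Now if $\gamma$ is BV with support in $\overline J$ then $L^n\gamma$ is supported in $\overline{f^n(J)}$ and equals $(\gamma g)\circ(f^n|_J)^{-1}$ there; since $(f^n|_J)^{-1}$ is monotone, $\operatorname{Var}_{f^n(J)}\big((\gamma g)\circ(f^n|_J)^{-1}\big)=\operatorname{Var}_J(\gamma g)$, while the at most two jumps of $L^n\gamma$ at $\partial f^n(J)$ contribute at most $2\|\gamma g\|_{L^\infty(J)}\le 2(\sup_J g)|\gamma|_{L^\infty(m)}$. Feeding in the product rule $\operatorname{Var}_J(\gamma g)\le(\sup_J g)\operatorname{Var}_J(\gamma)+|\gamma|_{L^\infty(m)}\operatorname{Var}_J(g)$ together with $\operatorname{Var}_J(\gamma)\le v(\gamma)$, $\sup_J g\le\Crr{dist}/|Df^n(x)|$, and the variation bound for $g$ just obtained, one arrives at
\[
v(L^n\gamma)\ \le\ \frac{\Crr{dist}}{|Df^n(x)|}\Big(v(\gamma)+\big(\tfrac{\Lambda_0\Crr{cmin}}{1-\Crr{cmin}}|I|+2\big)|\gamma|_{L^\infty(m)}\Big),
\]
which is \eqref{bvest} once one fixes $\Crr{distp}:=\tfrac{\Lambda_0\Crr{cmin}}{1-\Crr{cmin}}+\tfrac{2}{|I|}$ (and requires $\Crr{distp}$ to be at least this large already in \eqref{ddis2}).

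The whole argument is soft, and I do not expect a genuine obstacle. The one point that must be watched is that $\Crr{dist}$ and $\Crr{distp}$ come out independent of $n$, which is exactly the convergence of the geometric series $\sum_{0\le j<n}\Crr{cmin}^{\,n-j}\le\Crr{cmin}/(1-\Crr{cmin})$ supplied by uniform expansion; the only mildly delicate bookkeeping is the treatment of the two boundary jumps of $L^n\gamma$ in the last step, which depends on the precise normalization of the seminorm $v$ but is harmless and is absorbed into $\Crr{distp}$ at the cost of the extra additive $2/|I|$.
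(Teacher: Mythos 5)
The paper states Lemma \ref{bvpartition} without giving a proof, treating it as the standard R\'enyi distortion estimate together with the classical one-branch Lasota--Yorke variation bound for piecewise $C^2$ uniformly expanding maps. Your argument is precisely that standard argument and it is correct: the Lipschitz bound on $\log|Df|$ on each monotonicity piece, the geometric backward-contraction sum giving an $n$-independent constant, and the BV product rule applied to $L^n\gamma=(\gamma/|Df^n|)\circ(f^n|_J)^{-1}$ with the two boundary jumps absorbed into $\Crr{distp}$. There is no gap.
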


\section{ Regularity of infinitesimal  conjugacies} 

\begin{theorem}\label{tt} Let $f\in \mathcal{B}_{exp}^{2+\beta}(C)$, with $\beta\in[0,1)$,  and  $p=p(f)$. Let $\alpha\colon I \rightarrow \mathbb{C}$ be a continuous function. The following statements are equivalent 
\begin{itemize} 
\item[A.] There is a   function  $v\in \mathcal{B}^{1+\beta}(C)$ such that 
$$v(x)= \alpha(f(x))-Df(x)\alpha(x)$$
for every $x\not\in Crit(f)$.
\item[B.]  We have that 
$$\alpha(x)= H(x)+ G(x)+ \int1_{[a,x]}  \Big( \sum_{n=0}^{\infty}  \sum_{i=0}^{p-1}  \phi\circ f^{np+i} \Big) dm,$$
where 
\begin{itemize} 
\item $\phi\in\mathcal{B}^\beta(C)$  satisfies
\begin{equation}\label{kl} \int \phi\Phi_1(\psi) \ dm=0\end{equation} 
for every $\psi\in BV$,  
\item $H$ is a Lipschitz  function such that $DH\circ f - DH$ belongs to  $\mathcal{B}^\beta(C)$ and 
\item $G$ is given by 
$$G(x)= \int \phi \Big( \sum_{\lambda\in \Lambda_1\setminus\{1\}}  \frac{1}{1-\lambda}  \Phi_\lambda(1_{[a,x]})    \Big) \ dm$$
 if $\Lambda_f\setminus\{1\} \neq \emptyset$, or zero otherwise. Here $\Phi_\lambda$ are the projections defined in Section \ref{layo}. $G$  is also a Lipschitz function.
\end{itemize}
\end{itemize} 
\vspace{5mm}
Indeed we can choose
 $$\phi = \frac{Dv + D^2f \alpha }{Df }.$$
%and 
 %$$\int 1_{[a,b]} \sum_{k=0}^\infty  \sum_{j=0}^{p-1} \phi\circ f^{kp+j} \ dm =0$$
 % for every interval $[a,b]$ such that 
% $a,b\in \{-1,1\}\cup Crit(f)$ and $$(a,b)\cap (  \{-1,1\}\cup Crit(f))=\emptyset.$$
%\item[C.]  There is a piecewise $C^{1+\beta}$ function $v\colon [-1,1]\rightarrow \mathbb{C}$, with  $v(-1)=v(1)=0$ such that 
%$$v(x)= \alpha(f(x))-Df(x)\alpha(x)$$
%for every $x\not\in Crit(f)$.
\end{theorem}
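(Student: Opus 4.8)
The plan is to establish the equivalence by going through the Livsic-type cohomological equation \eqref{livsic} for $D\alpha$ in the sense of distributions. First I would prove the implication A $\Rightarrow$ B. Starting from the twisted cohomological equation $v = \alpha\circ f - Df\cdot\alpha$, I set $\phi = (Dv + D^2f\,\alpha)/Df$; since $v\in\mathcal{B}^{1+\beta}(C)$, $f\in\mathcal{B}^{2+\beta}(C)$ and $\alpha$ is continuous, one checks $\phi\in\mathcal{B}^\beta(C)$. The key observation is that, although $\alpha$ need not be differentiable, the twisted equation can be differentiated in the distributional sense to yield $\phi = D\alpha\circ f - D\alpha$ as distributions (here $D\alpha\circ f$ must be interpreted via the transfer operator / change of variables, which is where the $Df$ weights enter). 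I would then test against $1_{[a,x]}$ and use the spectral decomposition \eqref{ii}: writing $L = \sum_{\lambda\in\Lambda_1}\lambda\Phi_\lambda + K$, one splits the formal Birkhoff sum $\sum_i \phi\circ f^i$ into the contribution of $K$ (which converges geometrically by property ii, producing the Lipschitz piece $H$), the contribution of the eigenvalues $\lambda\neq 1$ (which by the resolvent identity $\sum_n\lambda^n = 1/(1-\lambda)$ gives the explicit Lipschitz function $G$), and the contribution of the eigenvalue $1$, which is the genuinely singular part $\sum_n\sum_{i=0}^{p-1}\phi\circ f^{np+i}$ that survives only as a distribution. The orthogonality condition \eqref{kl}, $\int\phi\,\Phi_1(\psi)\,dm = 0$, is exactly the solvability (coboundary) condition that must hold for $D\alpha$ — i.e. for the $\lambda=1$ part of $\phi$ to be a distributional coboundary — and it should be derived by pairing the equation with $\Phi_1$-images and using that $\alpha$ is a genuine (continuous, hence bounded) function rather than merely a distribution. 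Integrating everything in $x$ recovers the stated formula $\alpha = H + G + \int 1_{[a,x]}(\sum\sum\phi\circ f^{np+i})\,dm$, up to absorbing an additive constant into $H$.

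For the converse B $\Rightarrow$ A, I would run the argument in reverse: given the representation of $\alpha$, define $v$ by \eqref{phi} inverted, i.e. set $v$ so that $\phi = (Dv + D^2f\,\alpha)/Df$ holds, equivalently $Dv = Df\cdot\phi - D^2f\cdot\alpha$; one must check this produces a well-defined element of $\mathcal{B}^{1+\beta}(C)$ (integrate $Df\cdot\phi - D^2f\cdot\alpha$, which lies in $\mathcal{B}^\beta(C)$, fixing the constant of integration appropriately) and then verify that $v = \alpha\circ f - Df\cdot\alpha$ holds pointwise off $Crit(f)$. The verification amounts to checking that $\alpha\circ f - Df\cdot\alpha$ has the correct derivative (namely the $\phi$-expression, by construction) and the correct value at one point; the value-matching is where the Lipschitz correction terms $H$ and $G$ and the precise normalization of the Birkhoff-sum integral are used, together with the coboundary condition \eqref{kl} which guarantees the distributional Birkhoff sum is actually the derivative of the continuous function $\int 1_{[a,x]}(\cdots)\,dm$.

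The main obstacle, and the technical heart of the argument, is making precise sense of "$D\alpha$ is a Birkhoff sum in the sense of distributions" and justifying the manipulations of the transfer operator on the non-smooth object $1_{[a,x]}\in BV$. Concretely: (1) one must show the series $\sum_{n}\sum_{i=0}^{p-1}\phi\circ f^{np+i}$ converges in the space of distributions (or as a functional on $BV$), which relies on the Lasota--Yorke inequalities \eqref{ly}--\eqref{lyy} and the spectral gap for $K$, plus Lemma \ref{bvpartition} to control variations over the dynamical partition $\mathcal{P}^n$; (2) one must correctly identify the "resonant" part coming from the peripheral spectrum $\Lambda_1$ and show the eigenvalue-$1$ obstruction is killed by \eqref{kl} — this is a duality computation pairing $\phi$ with $\Phi_1(1_{[a,x]})$ and exploiting $\Phi_1^2 = \Phi_1$; and (3) one must show that after removing $G$ and the $K$-part, what remains integrates to a Lipschitz function $H$ with $DH\circ f - DH\in\mathcal{B}^\beta(C)$, which uses the geometric decay $|K^n\phi|_{BV}\le \Crr{cd}\Crr{cpf}^n|\phi|_{BV}$. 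The periodicity $p = p(f)$ enters because the peripheral spectrum consists of $p$-th roots of unity (property iii), so grouping the Birkhoff sum in blocks of length $p$ is what isolates the $\lambda=1$ eigenprojection cleanly from the other roots of unity. I expect the bookkeeping of additive constants (absorbed into $H$) and the precise meaning of $D\alpha\circ f$ versus $L(D\alpha)$ to be the subtle points requiring care.
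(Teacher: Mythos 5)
Your high-level scheme for $A\Rightarrow B$ is right in spirit: differentiate distributionally to obtain a Livsic-type equation for $D\alpha$, then split via the spectral decomposition \eqref{ii}, with \eqref{kl} as the coboundary obstruction. But the step you label ``differentiate the twisted equation in the distributional sense to yield $\phi = D\alpha\circ f - D\alpha$'' is precisely where all the work lies, and your plan has no mechanism for it: $D\alpha\circ f$ has no a priori meaning when $\alpha$ is merely continuous and $f$ is only piecewise and discontinuous, and invoking ``the transfer operator'' does not by itself supply one. The paper circumvents this via Lemma~\ref{aad}, a careful construction of smooth corrector functions $\theta_j$ supported near the finite orbit $\mathcal{O}^+_f(j)$ of the critical set, with $|D\theta_j|_{L^1} = O(j)$, $|\theta_j|_{L^\infty}=O(1)$, and a uniform bound on $\int D\theta_j\,\Phi_\lambda(\psi)\,dm$, which upgrades the truncated series $\sum_{k<j} v(f^k(x))/Df^{k+1}(x)$ to a \emph{continuous} approximation $\alpha_j$ of $\alpha$ that matches $\alpha$ on critical orbits. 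This is then fed into Lemmas~\ref{limi} and~\ref{uy} to express $\int\alpha\,d\psi$ as a limit of transfer-operator sums, and \eqref{kl} is \emph{extracted} from the fact that boundedness of $\alpha$ forces $\{\int\psi\sum_{n<j}\phi\circ f^n\,dm\}_j$ to stay bounded, which is impossible if $\int\phi\,\Phi_1(\psi)\,dm\neq 0$. Without this approximation device (or some equivalent), the distributional identity underlying your plan is unjustified.

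For $B\Rightarrow A$ your framing is inverted and does not avoid the difficulty: you propose to define $v$ by antidifferentiating $Df\cdot\phi - D^2f\cdot\alpha$ and then ``verify $v = \alpha\circ f - Df\cdot\alpha$,'' but this verification is exactly the claim that the Log-Lipschitz function $\alpha\circ f - Df\cdot\alpha$ is in fact differentiable, plus an extra constant-of-integration matching problem on each piece. The paper attacks the coboundary directly: setting $\beta$ equal to $G$ plus the integrated Birkhoff sum, it computes by integration by parts against right-continuous $BV$ test functions supported in a single $[c_i,c_{i+1}]$ the distributional derivative of $w := \beta\circ f - Df\cdot\beta$, shows via \eqref{kl} that the Ces\`aro remainder $\tfrac{1}{u}\int\psi\,Df\sum_{n<u}\phi\circ f^{n+1}\,dm$ dies, and concludes $w$ is piecewise $C^{1+\beta}$; adding $s := H\circ f - Df\cdot H$, piecewise $C^{1+\beta}$ by the hypothesis on $DH\circ f - DH$, yields $v$. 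I would recover this direct computation rather than the antiderivative detour. Note also that both directions of the paper's argument lean on the companion paper's convergence results for Birkhoff sums as distributions (Ces\`aro convergence to a Log-Lipschitz limit for $B\Rightarrow A$, and existence of $\lim_j\int\psi\sum_{n\le pj}\phi\circ f^n\,dm$ once \eqref{kl} holds for $A\Rightarrow B$); the bare Lasota--Yorke inequality and spectral gap, which is all your plan invokes, are necessary but not sufficient.
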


%
% Define
% $$\alpha(x)=\int  1_{[-1,x]} \Big(A+ \sum_{k=0}^\infty  \phi\circ f^k\Big) \ dm,$$

%\begin{lemma} Let $D \in BV^\star$ be a distribution such that $D(\phi)=0$ for every $\phi\in BV$ satisfying 
%$$\int \phi \ dm=0.$$
%Then there are $A,B\in \mathbb{C}$ such that $D(1_{[-1,x]})=Ax+B$ for every $x\in I$. 
%\end{lemma} 
%\begin{proof} Denote $\psi(x)= D(1_{[-1,x]}).$ For every $x\in [-1,1]$ define 
%$$\phi_x = \frac{1}{1+x}1_{[-1,x]}-  \frac{1}{1-x}1_{[x,1]}.$$
%The assumption on $D$ implies $D(\phi_x)=0$, that is
 %$$\frac{1}{1+x} (\psi(x)-\psi(-1))-  \frac{1}{1-x}(\psi(1)-\psi(x))=0$$
% for every $x$. So $\psi$ is affine.
% \end{proof} 

\begin{proof}[Proof of Theorem \ref{tt} ($B\implies A$)]  Define
$$\tilde{\alpha}_n(x)=\int   1_{[a,x]}\Big( \sum_{k=0}^{n}\phi\circ f^{k}\Big)\  dm$$
and
$$\hat{\alpha}_u(x)= \frac{1}{u} \sum_{n=0}^{u-1} \tilde{\alpha}_n(x).$$
By G.R. and S.  \cite[Theorem \ref{sum-conv2}]{um}  the following (uniform) limit
$$
\beta(x)=\lim_u \hat{\alpha}_u(x)=  \int 1_{[a,x]} \sum_{n=0}^\infty \sum_{j=0}^{p-1} \phi\circ f^{pn+j} \ dm + G(x),
$$
exists for every $x\in I$ and $\beta$ is a Log-Lipschitz function, where
$$G(x)= \int \phi \Big( \sum_{\lambda\in \Lambda_1\setminus\{1\}}  \frac{1}{1-\lambda}  \Phi_\lambda(1_{[a,x]})    \Big) $$
is a Lipchitz function.    Define
$$w_n(x)= \hat{\alpha}_n(f(x))-Df(x)\hat{\alpha}_n(x)$$
and
$$w(x)=\beta(f(x))-Df(x)\beta(x).$$
%We claim that $G'$ is $f$-invariant. Indeed, since
%$$\Phi_\lambda(\psi)=\lim_n  \frac{1}{n} \sum_{k=0}^{n-1} \frac{1}{\lambda^n} L^n(\psi),$$
%and this convergence is in $BV$ and consequently in $L^\infty(m)$, so  we have 
%\begin{align*} G_\lambda(x)&= \int \phi  \Phi_\lambda(1_{[-1,x]}) \ dm = \lim_n   \int   \phi \frac{1}{n} \sum_{k=0}^{n-1} \frac{1}{\lambda^k} L^k(1_{[-1,x]}) \ dm\\
%&=  \lim_n   \int   1_{[-1,x]}    \frac{1}{n}  \sum_{k=0}^{n-1}  \frac{\phi\circ f^k }{\lambda^k}   \ dm. 
%\end{align*} 
Since $\tilde{\alpha}_n$ is continuous and piecewise smooth,   for every right continuous function $\psi\in BV$  such that 
$$supp \ \psi \subset [c_i,c_{i+1}]$$
for some $i < n$, we have 
\begin{align} & -\int w_n \ d\psi = -\int \big(\tilde{\alpha}_n\circ f -Df \tilde{\alpha}_n \big)\ d\psi \nonumber \\
&= w_n(a^+)\psi(a)+ \int D\tilde{\alpha}_n \circ f Df  \psi   - \Big( D^2f  \tilde{\alpha}_n + Df D\tilde{\alpha}_n  \Big)\psi \ dm\nonumber \\ 
&= w_n(a^+)\psi(a) +  \int   Df  \ \psi \sum_{k=0}^n \phi\circ f^{k+1} - \Big( D^2f \tilde{\alpha}_n + Df \sum_{k=0}^n \phi\circ f^{k}   \Big)\psi \ dm\nonumber \\ 
&=  w_n(a^+)\psi(a) + \int   \Big(- D^2f \tilde{\alpha}_n - Df \phi  +   Df \phi\circ f^{n+1}   \Big)\psi \ dm,\nonumber 
\end{align} 
so
\begin{align} & \ -\int \big(\hat{\alpha}_u\circ f-Df \hat{\alpha}_u\big)\ d\psi  \\
= \psi(a)  \frac{1}{u} \sum_{n=0}^{u-1} w_n(a^+)  &+\int   \Big(- D^2f \hat{\alpha}_u - Df \phi \Big)\psi  \ dm + \frac{1}{u}  \int  \psi Df \sum_{n=0}^{u-1} \phi\circ f^{n+1} \ dm\nonumber 
\end{align} 
Due (\ref{kl}) we have 
$$\lim_u  \int \frac{1}{u}  \psi Df \sum_{n=0}^{u-1} \phi\circ f^{n+1} \ dm =0,$$
so we conclude that 
\begin{align} & \ -\int w \ d\psi \nonumber =w(a^+)\psi(a)+ \int   \Big(- D^2f  \beta - Df \phi  \Big)\psi \ dm.\nonumber 
\end{align} 
Taking $\psi=1_{[c_i,x]}$ we obtain 
$$w(x)= w(c_i^+) + \int_{c_i}^x  - D^2f  \beta  - Df \phi \ dm.$$
for every $x\in [c_i,c_{i+1}]$, so  $w$ is a piecewise $C^{1+\beta}$ function.  Define
$$s= H\circ f -Df \cdot H.$$
Then
$$Ds =Df (H'\circ f-H')- D^2f  H,$$
 so $s$ is  a piecewise $C^{1+\beta}$ function due the assumptions on $H$. So $\alpha=H + \beta$ satisfies
 $$w+s = \alpha\circ f -Df \alpha,$$
 where $v=w+s$ is a piecewise $C^{1+\beta}$ function. 
\end{proof}

 \begin{lemma}\label{aad}  Let $v \in \mathcal{B}^{1+\beta}(C) $, with $\beta\in [0,1)$,  such that there is a continuous function  $\alpha:[a,b]\rightarrow \mathbb{R}$ that satisfies the equation
\begin{equation}\label{pp}  v(x)= \alpha(f(x))-Df(x)\alpha(x).\end{equation} 
for every $x\not\in C$. 
Then there exists $\Cll{neww}$ such that for every $j$ there is a $C^\infty$ function  $\theta_j\colon I \rightarrow \mathbb{C}$ satisfying 
\begin{enumerate} 
\item[A.] The function $$\alpha_j(x)= \sum_{k=0}^{j-1}\frac{v(f^k(x))}{Df^{k+1}(x)}+\frac{\theta_j(f^{j}(x))}{Df^{j}(x)}.$$
has a continuous extension to $I$, 
\item[B.] $|D\theta_j|_{L^1(m)}\leq \Crr{neww}  j,$
\item[C.]  $|\theta_j|_{L^\infty(m)}\leq  \Crr{neww} ,$
%\item[D.] $\alpha_j(-1)=\alpha_j(1)=0$,
\item[D.]  We have that
$$\sup_{\lambda\in \Lambda_1} \sup_j \sup_{\psi\in BV, \psi \neq 0} \frac{|\int D\theta_j  \Phi_\lambda(\psi) \ dm |}{|\psi|_{L^1(m)}} \leq  \Crr{neww} .$$
\item[E.] $\alpha_j(c)=\alpha(c)$ for every $c\in C$. 
\end{enumerate} 

\end{lemma}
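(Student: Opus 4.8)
The plan is to build the correction functions $\theta_j$ iteratively, starting from $\theta_0$ and passing from $\theta_j$ to $\theta_{j+1}$ by a single application of the transfer operator machinery, so that the required estimates (B)--(D) are controlled by the Lasota--Yorke inequality \eqref{lyy} and the spectral decomposition \eqref{ii}. First I would rewrite the twisted cohomological equation \eqref{pp} in the ``pulled-back'' form: iterating \eqref{pp} along an orbit gives, for every $j$,
$$\alpha(x)=\sum_{k=0}^{j-1}\frac{v(f^k(x))}{Df^{k+1}(x)}+\frac{\alpha(f^j(x))}{Df^j(x)},$$
valid for $x$ whose orbit avoids $C$, and by continuity of $\alpha$ this extends to all of $I$ once the branches are read with the appropriate lateral limits. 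This already shows that the natural candidate is $\theta_j=\alpha$ itself; the point of the lemma is that $\alpha$ has bad regularity (it is only Log-Lipschitz, not $BV$ with controlled norm), so we must replace $\alpha$ on the $j$-th level by a smoothed version $\theta_j$ whose $BV$-type norm grows only linearly in $j$ while leaving the finitely many values $\alpha(c)$, $c\in C$, untouched (condition (E)). Concretely, I would set $\theta_0$ to be a fixed smooth function interpolating the values $\alpha(c)$, $c\in C$, and then \emph{define} $\theta_{j+1}$ so that
$$\frac{\theta_{j+1}(f^{j+1}(x))}{Df^{j+1}(x)}=\frac{v(f^j(x))}{Df^{j+1}(x)}+\frac{\theta_j(f^{j+1}(x))}{Df^{j+1}(x)}\quad\text{``pushed forward'',}$$
i.e. $\theta_{j+1}$ is obtained from $Df\cdot\theta_j\circ f^{-1}$ type expressions plus the contribution of $v$, which under the $\log$-derivative is exactly an application of (a weighted version of) the transfer operator $L$. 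The identity $\alpha_{j+1}=\alpha_j$ holds by construction, and (A) and (E) follow for all $j$ by induction since they hold for $\alpha=\alpha_0$.

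The estimates are then extracted as follows. For (C), note $\theta_j$ is a finite sum of terms of the form (branch of $f^{-j}$) composed appropriately, each bounded because $\alpha$ is continuous hence bounded and the weights $1/|Df^k|$ are uniformly bounded; here the bounded distortion estimate \eqref{ddis} of Lemma \ref{bvpartition} keeps the constant independent of $j$. For (B), the key is that $D\theta_j$ is, up to the smooth pieces, a telescoping sum whose new term at each step contributes a uniformly bounded amount to the $L^1$-norm: differentiating the recursion and using $\sum_{J\in\mathcal P^j}\int_J|Df^j|\,dm\le |I|$ together with \eqref{bvest} gives $|D\theta_{j+1}|_{L^1}\le |D\theta_j|_{L^1}+\Crr{neww}$, whence the linear bound $|D\theta_j|_{L^1}\le \Crr{neww}\,j$ by induction. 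For (D), I would use that $\Phi_\lambda$ extends to a bounded operator $L^1(m)\to BV$ (property (i) in Section \ref{layo}) and that $D\theta_j$, when tested against $\Phi_\lambda(\psi)$, can be moved by duality onto $\Phi_\lambda^\ast$ acting on a $BV$ function built from $\psi$; since the $\Phi_\lambda$ are the eigenprojections for the peripheral spectrum, the iteration $\int D\theta_{j+1}\Phi_\lambda(\psi)$ differs from $\bar\lambda\int D\theta_j\Phi_\lambda(\psi)$ by a uniformly bounded error coming only from the $v$-term and the smooth pieces, and since $|\lambda|=1$ one does \emph{not} gain decay but one also does not accumulate growth beyond a geometric series that telescopes — giving the uniform bound in (D). The role of $p=p(f)$ (the common period of the peripheral eigenvalues, property (iii)) is what makes $\sum_j \bar\lambda^{j}$-type sums stay bounded after the Cesàro/blocking of length $p$, mirroring exactly the averaging already used in the $B\Rightarrow A$ proof above.

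The main obstacle I anticipate is estimate (D): controlling $\int D\theta_j\,\Phi_\lambda(\psi)\,dm$ uniformly in $j$ requires exploiting the \emph{exact} spectral structure \eqref{ii} rather than the crude Lasota--Yorke bound, because on the peripheral eigenspaces there is no contraction — one must see a genuine cancellation/telescoping after projecting, and the presence of the non-$BV$ boundary data $\alpha(c)$ and of the $D^2f\cdot(\cdot)$ terms (which appear when one differentiates the pull-back identity, compare \eqref{livsic}) makes the bookkeeping delicate. A secondary technical point is checking that the smoothings $\theta_j$ can be chosen genuinely $C^\infty$ (not merely $BV$) while preserving (E) and the recursion up to a smooth error; this I would handle by an explicit mollification supported away from $C$, absorbing the mollification error into the ``$\Crr{neww}$'' constants, which is legitimate because all the estimates (B)--(D) are stable under adding a fixed smooth function.
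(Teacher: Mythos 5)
Your proposal diverges from the paper in both the construction of $\theta_j$ and, more importantly, in the argument for estimate (D), and the latter contains a genuine gap. The paper does not build $\theta_j$ by any recursion tied to the transfer operator. Instead it defines $\theta_j$ explicitly as a finite sum of sharply localized $C^\infty$ bumps centered at the points of the forward orbit $\mathcal{O}^+_f(j)=\cup_{c\in\hat{C}}\{f^i(c):i\le j\}$, with heights $\alpha(y)$: one fixes $h\in C_c^\infty(-1,1)$ with $h(0)=1$ and sets $\theta_j(x)=\sum_{y\in\mathcal{O}^+_f(j)}\alpha(y)h(M(x-y))$ for $M$ large (disjoint supports). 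Properties A and E are then obtained by a backward recursion $\alpha_{j,k+1}=-v/Df+\alpha_{j,k}\circ f/Df$ starting from $\alpha_{j,0}=\theta_j$, which preserves the interpolated values along $\mathcal{O}^+_f(\,\cdot\,)$. Your proposed recursion $\theta_{j+1}(f^{j+1}(x))=v(f^j(x))+\theta_j(f^{j+1}(x))$ is not a definition of a function (the right-hand side depends on the inverse branch of $f$), and a straightforward computation shows it does not make $\alpha_{j+1}=\alpha_j$; this part of the argument would need to be replaced entirely.

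The more serious issue is (D). You propose a recursion $\int D\theta_{j+1}\,\Phi_\lambda(\psi)\,dm \approx \bar\lambda\int D\theta_j\,\Phi_\lambda(\psi)\,dm+O(1)$ and claim the sum "does not accumulate growth beyond a geometric series that telescopes." That is false when $|\lambda|=1$: if $a_{j+1}=\bar\lambda a_j+\epsilon_j$ with $|\epsilon_j|\le C$, then $|a_j|$ can grow like $Cj$ — there is no contraction and no telescoping. This is precisely why the paper's localized-bump construction is essential. With $\theta_j$ a sum of bumps, $D\theta_j$ is a sum of approximate derivative-of-Dirac kernels at the orbit points, and for any $w_i\in BV$ spanning the (finite-dimensional) image of $\Phi_\lambda$, integration by parts on each bump gives $\int D\theta_j\,w_i\,dm=\sum_{y\in\mathcal{O}^+_f(j)}\alpha(y)\bigl(w_i(y^-)-w_i(y^+)\bigr)+O(1)$, and the crucial point is that $\sum_y|w_i(y^-)-w_i(y^+)|\le|w_i|_{BV}$ holds \emph{uniformly in $j$} because the total variation of a fixed $BV$ function bounds the sum of all its jumps, no matter how many sample points are used. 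That is the cancellation mechanism the lemma needs, and it is structural (coming from $BV$) rather than spectral. Your intuition that (D) is the main obstacle is right, but the iterative/spectral route you sketch does not supply the required uniform bound; you would have to switch to a localized construction of this kind (or find another mechanism that exploits bounded variation) to close the gap.
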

\begin{proof} Note  that the continuity of $\alpha$ and (\ref{pp}) implies that 
%\begin{equation}\label{ppp}   v(c^\pm)=\alpha(f(c^\pm)) -Df(c^\pm)\alpha(c)=0.   \end{equation} 
\begin{equation}\label{ppp}  \alpha(c)= - \frac{v(c^\pm)-\alpha(f(c^\pm))}{Df(c^\pm)}.  \end{equation} 
for every $c\in C_f$. 
 Let $h\colon \mathbb{R}\rightarrow \mathbb{R}$ be a $C^\infty$ function such that $h(0)=1$ and $h(x)=0$ for $x\not \in (-1,1)$. Let $j\in \mathbb{N}$.  For $c\in \hat{C}$ denote
 $$\mathcal{O}^+(c,j)=\{ x\in I\colon x=f^i(c), \ i\leq j\}$$
 and
 $$\mathcal{O}^+_f(j)=\cup_{c\in \hat{C}} \mathcal{O}^+(c,j).$$
Let
$$\theta_j(x)=\sum_{y \in \mathcal{O}^+_f(j)}   \alpha(y)h(M(x-y)).$$
%$$\theta_j(x)=\sum_{c\in Crit(f)} \sum_{k=1}^j    \alpha(f^k(c))h(M(x-f^k(c)))$$
We will choose $M$ later. Since $\alpha$ is bounded we have that  $|D\theta_j|_{L^1}\leq C j$. Moreover if $M$ is large enough we have  $\theta_j(x)=\alpha(x)$ for $x \in \mathcal{O}^+_f(j)$, and   $|\theta_j|_{L^\infty}\leq C$.  To prove $D.$, consider $\lambda\in \Lambda_1$. Then the image $E_\lambda$ of $\Phi_\lambda$ has finite dimension.  Let $w_1, \dots, w_d \in BV$ be a basis  of $E_\lambda$. Then
\begin{equation} \label{k0} \Phi_\lambda(\psi)= \sum_{i=1}^d   r_i(\psi)w_i,\end{equation}
where $r_i\in L^1(I)^\star$. Since  $w_i$ has bounded variation we have 
$$\sum_{x\in \mathcal{O}^+_f(j)} |w_i(x^+)-w_i(x^-)|\leq |w_i|_{BV}$$
for every $j$.  For every $j$   we can choose $M$ large enough such that
$$\sum_{y\in \mathcal{O}^+_f(j)} |w_i(z^+_y)-w_i(y^+)|+ |w_i(z^-_y)-w_i(y^-)|\leq 1,$$
for all $z^-_y \in [y-1/M,y]$ and $z^+_y\in [y,y+1/M]$, and moreover
$$[y-1/M,y+1/M]\cap   [z-1/M,z+1/M] =\emptyset $$
for $y\neq z$ with  $y,z\in  \mathcal{O}^+_f(j)$. Since 
$$supp \ \theta_j \subset \cup_{y \in \mathcal{O}^+_f(j)} [y-1/M,y+1/M],$$
and furthermore 
$$ \int_{y-1/M}^y  M Dh(M(x-y)) \ dm(x)=1,  \  \int_y^{y+1/M}   M Dh(M(x-y)) \ dm(x)=-1$$
and
$$ \int_{y-1/M}^{y+1/M}  |M Dh(M(x-y))| \ dm(x) =  \int_{-1}^{1}  |Dh(x)| \ dm(x)$$
we obtain
\begin{align*}& \int D\theta_j  w_i \ dm =   \sum_{y \in \mathcal{O}^+_f(j)}  \int_{[y-1/M,y+1/M]} D\theta_j  w_i \ dm  \ dm  \\   &=\sum_{y \in \mathcal{O}^+_f(j)}   \alpha(y)  \int_{y-1/M}^y  M Dh(M(x-y))  w_i(x) \ dm(x)\\
&+\sum_{y \in \mathcal{O}^+_f(j)}  \alpha(y) \int_y^{y+1/M}   M Dh(M(x-y))  w_i(x) \ dm(x) \\
&=\sum_{y \in \mathcal{O}^+_f(j)}   \alpha(y)  \int_{y-1/M}^y  M Dh(M(x-y)) [ w_i(y^-)  +  (w_i(x)-w_i(y^-))]   \ dm(x)  \\
&+\sum_{y \in \mathcal{O}^+_f(j)}  \alpha(y) \int_y^{y+1/M}   M Dh(M(x-y)) [ w_i(y^+)  +  (w_i(x)-w_i(y^+))]  \ dm(x) \\
&=\sum_{y \in \mathcal{O}^+_f(j)}   \alpha(y)w_i(y^-)   +  \int_{y-1/M}^y  M Dh(M(x-y)) (w_i(x)-w_i(y^-)) \ dm(x)  \\
&+\sum_{y \in \mathcal{O}^+_f(j)}  -\alpha(y)w_i(y^+) +  \int_y^{y+1/M}  M Dh(M(x-y)) (w_i(x)-w_i(y^+))  \ dm(x) \\
&=\sum_{y \in \mathcal{O}^+_f(j)}   \alpha(y)(w_i(y^-) -w_i(y^+)) + Q(j,i,M),
\end{align*} 
with 
\begin{align*} &|Q(j,i,M)|\\
&\leq |Dh|_{L^1(m)}\Big(\sum_{y \in \mathcal{O}^+_f(j)} \sup_{x\in [y,y+1/M]} |w_i(x)-w_i(y^+)| + \sup_{x\in [y-1/M,y]}  |w_i(x)-w_i(y^-)|  \Big) \\
&\leq  |Dh|_{L^1(m)}.
\end{align*}
Moreover
$$|\sum_{y \in \mathcal{O}^+_f(j)}   \alpha(y)(w_i(y^-) -w_i(y^+))|\leq \sup_{x\in I} |\alpha(x)| |w_i|_{BV}.$$
so we conclude that
\begin{equation}\label{k3} \Big| \int D\theta_j  w_i \ dm \Big|\leq   \sup_{x\in I} |\alpha(x)| |w_i|_{BV} +  |Dh|_{L^1(m)}.\end{equation} 
By (\ref{k0}) and (\ref{k3}) we obtain $D.$

 Define recursively the functions $\alpha_{j,k}$, $k\leq j$,  as $\alpha_{j,0}=\theta_j$ and
$$\alpha_{j,k+1}(x)=-\frac{v(x)}{Df(x)} +\frac{\alpha_{j,k}(f(x))}{Df(x)}$$
for $x\not\in C_f$.  One can easily see that due (\ref{ppp}) the function  $\alpha_{j,k}$ has a continuous extension to $I$ and
$\alpha_{j,k}(x)=\alpha(x)$ for $x\in  \mathcal{O}^+_f(j-k)$.   It follows that 
$$\alpha_{j,\ell}(x)=- \sum_{k=0}^{\ell-1} \frac{v(f^k(x))}{Df^{k+1}(x)}+\frac{\theta_j(f^{\ell}(x))}{Df^{\ell}(x)}.$$
Take $\alpha_j=\alpha_{j,j}$.
\end{proof}

Let 
$$\hat{\alpha}_j(x)=-\sum_{k=0}^{j-1} \frac{v(f^k(x))}{Df^{k+1}(x)},$$
and
$$\alpha_j(x)=\hat{\alpha}_j(x)+ \frac{\theta_j(f^{j}(x)}{Df^{j}(x)},$$
where $\theta_j$ is given by Lemma \ref{aad}. Note that for every $x \not\in \cup_{i=0}^\infty f^{-i}C_f$ we have 
\begin{equation}\label{est3} \alpha(x)-\alpha_j(x) = O(\Crr{cmin}^j),\end{equation} 
and
\begin{equation}\label{est4} \alpha(x)-\hat{\alpha}_j(x)=O(\Crr{cmin}^j),\end{equation} 
Denote 
\begin{align*} R_j(x)= D \Big( \frac{\theta_j(f^{j}(x)}{Df^{j}(x)}  \Big)=D\theta_j(f^{j}(x))- \frac{\theta_j(f^{j}(x))}{Df^{j}(x)}\sum_{n=0}^{j-1} \frac{D^2 f(f^n(x))}{Df(f^n(x))}.\end{align*}

In this work all Riemann-Stieltjes integrals of the form
$$\int \eta \ d\gamma,$$ where 
$\eta$ and $\gamma$ are right continuous $BV$ functions on $I=[a,b]$, must be considered as an Riemann-Stieltjes integral on  $(a,b]$, that is 
$$ \int \eta \ d\gamma = \int_{(a,b]} \eta \ d\nu,$$
where $\nu$ is the signed measure  on $I$ given by $\nu([a,x])=\gamma(x).$  See  for instance Revuz and Yor \cite{parts} for information on   integration by parts for Riemann-Stieltjes integrals of BV functions.

\begin{lemma} \label{limi}  Let $v$ and $\alpha$ be as in Lemma \ref{aad}. Then for  every  right continuous $\psi \in BV$ such that $supp \  \psi \subset I$  we have 
\begin{align*} 
& \int \alpha \ d\psi =  - \alpha(a)\psi(a) \\ &+ \lim_j -\int R_j \psi \ dm + \sum_{n=0}^{j-1}   \int \Big( \frac{Dv+D^2f \ \hat{\alpha}_{j-n}}{Df} \Big) L^n \psi \ dm. 
 \end{align*} 
\end{lemma}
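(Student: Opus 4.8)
The plan is to express $\int \alpha\, d\psi$ as a limit of $\int \alpha_j\, d\psi$ using the uniform convergence $\alpha_j \to \alpha$ guaranteed by \eqref{est3}–\eqref{est4}, and then to integrate by parts and push the derivative through the composition with $f^k$, producing the transfer operator $L$ acting on $\psi$. First I would write $\alpha_j = \hat\alpha_j + \theta_j(f^j)/Df^j$, so that $\int \alpha_j\, d\psi = \int \hat\alpha_j\, d\psi + \int \bigl(\theta_j(f^j)/Df^j\bigr)\, d\psi$. For the second term, integration by parts (on $(a,b]$, with the boundary term at $a$ vanishing since $\theta_j(f^j(a))/Df^j(a)$ contributes to $\alpha_j(a) = \alpha(a)$, which I will need to keep track of carefully) turns it into $-\int R_j \psi\, dm$ plus a boundary contribution, where $R_j$ is exactly the derivative computed just before the statement.

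The main work is the first term. Writing $\hat\alpha_j(x) = -\sum_{k=0}^{j-1} v(f^k(x))/Df^{k+1}(x)$, I would compute $D\hat\alpha_j$. The key identity is that $D\bigl(v(f^k(x))/Df^{k+1}(x)\bigr)$ can be reorganized, after summing over $k$, into a telescoping-type expression; more precisely one should recognize that $\hat\alpha_{j-n}\circ f^n$-type terms appear, and that the observable $(Dv + D^2f\,\hat\alpha_{j-n})/Df$ — which is the finite-truncation analogue of $\phi$ in \eqref{phi} — emerges naturally when one differentiates and regroups. Then integration by parts gives $\int \hat\alpha_j\, d\psi = -\hat\alpha_j(a)\psi(a) - \int D\hat\alpha_j\, \psi\, dm$, and the change-of-variables / duality defining the transfer operator, $\int (g\circ f^n)\,\psi\, dm = \int g\, (L^n\psi)\, dm$, converts each composed term $\phi_{j-n}\circ f^n$ against $\psi$ into $\phi_{j-n}$ against $L^n\psi$. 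This should produce precisely $\sum_{n=0}^{j-1} \int \bigl((Dv + D^2f\,\hat\alpha_{j-n})/Df\bigr) L^n\psi\, dm$.

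Assembling: $\int \alpha\, d\psi = \lim_j \int \alpha_j\, d\psi$ by uniform convergence (using that $\psi\in BV$ has finite total variation, so $\int(\cdot)\,d\psi$ is a bounded functional on $C^0$), and combining the two pieces above yields the claimed formula, with the $-\alpha(a)\psi(a)$ boundary term coming from $-\hat\alpha_j(a)\psi(a)$ together with the $a$-boundary term of the $\theta_j$-piece, which sum to $-\alpha_j(a)\psi(a) = -\alpha(a)\psi(a)$ by part E of Lemma \ref{aad}. One should be slightly careful that the integration-by-parts formula for Riemann–Stieltjes integrals of right-continuous BV functions is applied in the convention fixed in the paragraph preceding the lemma (integral over $(a,b]$), so that all boundary terms land at $a$ and no spurious term at $b$ appears.

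The step I expect to be the main obstacle is the bookkeeping in differentiating $\hat\alpha_j$ and recognizing the regrouping that produces $(Dv + D^2f\,\hat\alpha_{j-n})/Df$ paired with $L^n\psi$: one must carefully track how the index shift interacts with the composition $f^k$ and with the factor $1/Df^{k+1}$, and verify that the partial Birkhoff sum $\hat\alpha_{j-n}$ (rather than the full $\hat\alpha_j$) is what appears at level $n$. This is a finite-$j$ identity, so no convergence issue arises there — it is purely an exercise in the chain rule and reindexing — but it is the algebraic heart of the lemma, and it is what makes the eventual passage to the distributional Birkhoff sum (Theorem \ref{tt}) possible. Everything else — the uniform limit, the boundary terms, the transfer-operator duality — is routine given the estimates already in Lemma \ref{aad}.
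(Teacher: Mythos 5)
Your overall strategy coincides with the paper's: pass to $\alpha_j$ by uniform convergence, integrate by parts, reorganize the derivative of $\hat{\alpha}_j$ so that $(Dv + D^2f\,\hat{\alpha}_{j-n})/Df$ composed with $f^n$ appears, and convert to $L^n\psi$ via transfer-operator duality. The algebraic reindexing you single out as the heart of the lemma is indeed what the paper carries out, and your description of what must emerge from the chain rule is accurate.

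There is, however, a genuine gap in your plan to split $\alpha_j = \hat{\alpha}_j + \theta_j\circ f^j/Df^j$ \emph{before} integrating by parts and to treat each summand separately. Neither summand is continuous on $I$: $\hat{\alpha}_j$ has jump discontinuities at every point of $\bigcup_{i<j}f^{-i}C$ (each term $v(f^k(x))/Df^{k+1}(x)$ jumps across $\bigcup_{i\le k}f^{-i}C$), and the same is true of $\theta_j\circ f^j/Df^j$. When the integrand $\eta$ has interior jumps, the Riemann--Stieltjes integration-by-parts identity for $\int \eta\, d\psi$ picks up additional jump contributions beyond $-\int D\eta\,\psi\,dm$ and the endpoint terms, because $d\eta$ then carries Dirac masses at the jump points; the identity you write, $\int \hat{\alpha}_j\,d\psi = -\hat{\alpha}_j(a)\psi(a) - \int D\hat{\alpha}_j\,\psi\,dm$, is therefore false as stated. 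This is precisely the pitfall that Lemma~\ref{aad} is designed to remove: item~A there says that $\alpha_j$ --- the sum, and only the sum --- admits a continuous extension to $I$. Accordingly the paper integrates by parts for $\alpha_j$ as a whole (``Since $\alpha_j$ is continuous and piecewise smooth\dots''), so that no interior jump terms arise, and only afterward decomposes the almost-everywhere derivative $D\alpha_j = D\hat{\alpha}_j + R_j$. Your split can be repaired by observing that the jump contributions from the two summands are equal and opposite since their sum is continuous, but that cancellation is exactly the content of Lemma~\ref{aad}.A and must be invoked explicitly; it is not part of the ``routine'' boundary-term bookkeeping you allude to. The simpler fix is the paper's order of operations: integrate by parts once for the continuous $\alpha_j$, then split the derivative.
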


\begin{proof}%By Proposition \ref{oo} we have
%\begin{equation}\label{average1} \int \frac{Dv+D^2f\cdot \alpha}{Df} \ d\mu=0.\end{equation} 
 For every $x \not\in \cup_{i=0}^j f^{-i}C_f$ 
\begin{align*} &D\alpha_j(x)\\
&= R_j(x)- \sum_{k=0}^{j-1} \Big( \frac{Dv(f^k(x))}{Df(f^k(x))}-  \frac{v(f^k(x))D^2f^{k+1}(x)}{(Df^{k+1}(x))^2}\Big)\\
&=   R_j(x)- \sum_{k=0}^{j-1} \Big(   \frac{Dv(f^k(x))}{Df(f^k(x))} - \sum_{n=0}^k  \frac{v(f^k(x))D^2f(f^n(x))}{Df^{k-n+1}(f^{n}(x))Df(f^n(x))}\Big) \\
&=  R_j(x)-   \sum_{n=0}^{j-1}  \frac{Dv(f^n(x))}{Df(f^n(x))} -   \sum_{k=0}^{j-1}  \sum_{n=0}^k  \frac{v(f^k(x))D^2f(f^n(x))}{Df^{k-n+1}(f^{n}(x))Df(f^n(x))} \\
&=  R_j(x)-  \sum_{n=0}^{j-1} \frac{Dv(f^n(x))}{Df(f^n(x))} -  \sum_{n=0}^{j-1}  \sum_{k=n}^{j-1}  \frac{v(f^k(x))D^2f(f^n(x))}{Df^{k-n+1}(f^{n}(x))Df(f^n(x))} \\
&=   R_j(x)- \sum_{n=0}^{j-1}   \frac{Dv(f^n(x))}{Df(f^n(x))}-  \sum_{n=0}^{j-1}  \frac{D^2f(f^n(x))}{Df(f^n(x))}  \sum_{k=n}^{j-1}  \frac{v(f^{k-n}(f^n(x)))}{Df^{k-n+1}(f^{n}(x))} \\
&=  R_j(x)- \sum_{n=0}^{j-1}  \frac{Dv(f^n(x))+D^2f(f^n(x))\hat{\alpha}_{j-n}(f^n(x))}{Df(f^n(x))}.\end{align*} 
Since  $\alpha_j$ is continuous  and piecewise smooth,   and $\psi$ has  bounded variation we have
\begin{align*} 
&- \int \alpha \ d\psi =-\lim_j  \int  \alpha_j  d\psi= \lim_j  \alpha_j(a)\psi(a) + \int  D \alpha_j  \psi \ dm  \\
 &=  \alpha(a)\psi(a) + 
 \lim_j \int R_j \psi \ dm - \sum_{n=0}^j   \int \Big( \frac{Dv\circ f^n+D^2f\circ f^n \ \hat{\alpha}_{j-n}\circ f^n}{Df\circ f^n} \Big) \psi \ dm \\
 &=  \alpha(a)\psi(a)  + \lim_j \int R_j \psi \ dm - \sum_{n=0}^j   \int \Big( \frac{Dv+D^2f \ \hat{\alpha}_{j-n}}{Df} \Big) L^n \psi \ dm. 
 \end{align*} 
 \end{proof} 
 
 \begin{lemma}\label{uy} Let $v$ and $\alpha$ be as in Lemma \ref{aad}. For every  $\psi \in BV$ we have
  \begin{align} \label{est2222}
 &\sum_{n=0}^{j-1}  \int  \Big[ \Big( \frac{Dv+D^2f\hat{\alpha}_{j-n}}{Df} \Big) -    \Big( \frac{Dv+D^2f\alpha}{Df} \Big)\Big] L^n \psi \ dm \\
  = \sum_{n=0}^{j-1}  & \int  \Big[ \Big( \frac{Dv+D^2f\hat{\alpha}_{j-n}}{Df} \Big) -    \Big( \frac{Dv+D^2f\alpha}{Df} \Big)\Big]  \Big(\sum_{\lambda\in \Lambda_1}  \lambda^n \Phi_\lambda(\psi)   \Big)   \ dm+ O(j\Crr{cpf}^j). \nonumber
   \end{align} 
\begin{equation}\label{est22}  \int (R_j-D\theta_j\circ f^{j} ) \psi \ dm =O(j\Crr{cmin}^j),\end{equation} 
\begin{equation} \label{est222}
  \int D\theta_j\circ f^{j} \psi  \ dm=\int D\theta_j  \Big(\sum_{\lambda\in \Lambda_1}  \lambda^{j} \Phi_\lambda(\psi)   \Big)\ dm   + O(j\Crr{cpf}^j).
  \end{equation} 
 \end{lemma}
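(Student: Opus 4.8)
The three estimates are all consequences of the spectral decomposition (\ref{ii}) of $L$ together with the bounds on $\theta_j$ and $\hat\alpha_{j-n}$ already established in Lemma~\ref{aad}. The guiding principle is: whenever we integrate a fixed $\mathcal B^\beta$ observable against $L^n\psi$, we split $L^n\psi$ as $\sum_{\lambda\in\Lambda_1}\lambda^n\Phi_\lambda(\psi) + K^n\psi$, the contribution of $K^n\psi$ being $O(\Crr{cpf}^n)$ by item ii; summing over $n\le j-1$ and inserting the extra polynomial factors coming from items B and D of Lemma~\ref{aad} produces the $O(j\Crr{cpf}^j)$ error terms. The key point making this work is that the observables in front are \emph{uniformly} controlled: $\hat\alpha_{j-n}$ converges to $\alpha$ at the geometric rate $O(\Crr{cmin}^{j-n})$ by (\ref{est4}), so $(Dv+D^2f\hat\alpha_{j-n})/Df - (Dv+D^2f\alpha)/Df = D^2f(\hat\alpha_{j-n}-\alpha)/Df$ has $\mathcal B^\beta$-norm $O(\Crr{cmin}^{j-n})$, which is what we need for the first estimate.

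\emph{First estimate (\ref{est2222}).} Write $\eta_n := D^2f(\hat\alpha_{j-n}-\alpha)/Df$, so the left side is $\sum_{n=0}^{j-1}\int \eta_n\, L^n\psi\,dm$. Use (\ref{ii}) to write $L^n\psi = \sum_{\lambda\in\Lambda_1}\lambda^n\Phi_\lambda(\psi)+K^n\psi$; the $\Phi_\lambda$ terms are exactly the right-hand side of (\ref{est2222}). For the remainder, $|\int\eta_n K^n\psi\,dm|\le |\eta_n|_{L^\infty}|K^n\psi|_{L^1}\le \Crr{cd}\Crr{cpf}^n|\psi|_{BV}\cdot C\Crr{cmin}^{j-n}$; summing the geometric-type series $\sum_{n=0}^{j-1}\Crr{cpf}^n\Crr{cmin}^{j-n}$ gives $O(\max(\Crr{cpf},\Crr{cmin})^{j})$, which is $O(j\Crr{cpf}^j)$ after relabelling the constant $\Crr{cpf}$ (one may enlarge it so that $\Crr{cmin}<\Crr{cpf}<1$ without loss). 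I would also note that $\eta_n$ really does lie in $\mathcal B^\beta(C)$ with the asserted norm bound, since $D^2f\in\mathcal B^\beta(C)$, $1/Df\in\mathcal B^{1+\beta}(C)$, and $\hat\alpha_{j-n}-\alpha$, being a uniformly convergent geometric tail of $\mathcal B^\beta$ functions, is controlled in $L^\infty$ by (\ref{est4}) (only the $L^\infty$ bound is needed here).

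\emph{Second estimate (\ref{est22}).} From the formula for $R_j$ just above the lemma, $R_j - D\theta_j\circ f^j = -\dfrac{\theta_j\circ f^j}{Df^j}\sum_{n=0}^{j-1}\dfrac{D^2f\circ f^n}{Df\circ f^n}$. Here $|\theta_j|_{L^\infty}\le\Crr{neww}$ by item C of Lemma~\ref{aad}, $|Df^j|^{-1}\le\Crr{cmin}^j$, the sum $\sum_{n=0}^{j-1}\dfrac{D^2f\circ f^n}{Df\circ f^n}$ is $O(j)$ in $L^\infty$ because $D^2f/Df$ is bounded, and $|\psi|_{L^1}\le|I|\,|\psi|_{L^\infty}\le C|\psi|_{BV}$; multiplying these bounds gives $|\int(R_j-D\theta_j\circ f^j)\psi\,dm| = O(j\Crr{cmin}^j|\psi|_{BV})$. (On the null set $\cup_i f^{-i}C_f$ the integrand is irrelevant for the Lebesgue integral.)

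\emph{Third estimate (\ref{est222}).} Here the observable in front is $D\theta_j$ itself, which is not bounded in $L^\infty$ — only $|D\theta_j|_{L^1}\le\Crr{neww}j$ — so one cannot pair it with $L^j\psi$ in $L^\infty\times L^1$ duality. Instead use $\int D\theta_j\circ f^j\,\psi\,dm = \int D\theta_j\, L^j\psi\,dm$ (the defining duality of the transfer operator, valid since $D\theta_j\in BV\subset L^1$ and $\psi\in BV$), then split $L^j\psi = \sum_{\lambda\in\Lambda_1}\lambda^j\Phi_\lambda(\psi)+K^j\psi$. The $\Phi_\lambda$ part is the claimed main term. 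For the remainder we need $|\int D\theta_j\, K^j\psi\,dm|=O(j\Crr{cpf}^j)$: since $K^j\psi$ is \emph{bounded in $BV$} (indeed $|K^j\psi|_{BV}\le\Crr{cd}\Crr{cpf}^j|\psi|_{BV}$ by item ii, hence $|K^j\psi|_{L^\infty}\le C\Crr{cpf}^j|\psi|_{BV}$), we can pair the other way: $|\int D\theta_j K^j\psi\,dm|\le |D\theta_j|_{L^1}|K^j\psi|_{L^\infty}\le \Crr{neww}j\cdot C\Crr{cpf}^j|\psi|_{BV}=O(j\Crr{cpf}^j)$. This is the one place where the linear-in-$j$ growth of $|D\theta_j|_{L^1}$ matters, and it is exactly compensated by the spectral gap $\Crr{cpf}<1$; I expect this bookkeeping — keeping track of which factor goes in $L^1$ and which in $L^\infty$, and why item~D of Lemma~\ref{aad} (rather than a crude bound) is what will later be needed when these $\Phi_\lambda$-terms are summed — to be the only real subtlety, the rest being direct substitution of the estimates from Lemma~\ref{aad} and the Lasota–Yorke decomposition.
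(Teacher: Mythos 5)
Your proposal is correct and follows essentially the same route as the paper: decompose $L^n\psi$ (resp.\ $L^j\psi$) via (\ref{ii}) into projection terms plus $K^n\psi$, pair the geometrically small factor $D^2f(\hat\alpha_{j-n}-\alpha)/Df$ (resp.\ $D\theta_j$) against $K^n\psi$ (resp.\ $K^j\psi$) in the appropriate $L^1$--$L^\infty$ duality, and for (\ref{est22}) use the explicit formula for $R_j-D\theta_j\circ f^j$ together with boundedness of $\theta_j$ and $D^2f/Df$ and the expansion $|Df^j|^{-1}\le\Crr{cmin}^j$. Your comment that one should replace $\Crr{cpf}$ by $\max\{\Crr{cpf},\Crr{cmin}\}$ is well observed — the paper's own proof concludes with $O(j\max\{\Crr{cpf},\Crr{cmin}\}^j)$ and silently absorbs this into the stated $O(j\Crr{cpf}^j)$.
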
 
 \begin{proof}  By (\ref{est3}) we have
  \begin{align*} 
 &\sum_{n=0}^{j-1}   \int  \Big[ \Big( \frac{Dv+D^2f\hat{\alpha}_{j-n}}{Df} \Big) -    \Big( \frac{Dv+D^2f\alpha}{Df} \Big)\Big] L^n \psi \ dm \\
 &=\sum_{n=0}^{j-1}   \int  \Big[ \Big( \frac{Dv+D^2f\hat{\alpha}_{j-n}}{Df} \Big) -    \Big( \frac{Dv+D^2f\alpha}{Df} \Big)\Big]  \Big(\sum_{\lambda\in \Lambda_1}  \lambda^n \Phi_\lambda(\psi)   \Big)   \ dm\\
 &+O(\sum_{n=0}^{j-1} \Crr{cmin}^{j-n}\Crr{cpf}^n) \\
 &=\sum_{n=0}^{j-1}  \int  \Big[ \Big( \frac{Dv+D^2f\hat{\alpha}_{j-n}}{Df} \Big) -    \Big( \frac{Dv+D^2f\alpha}{Df} \Big)\Big]  \Big(\sum_{\lambda\in \Lambda_1}  \lambda^n \Phi_\lambda(\psi)   \Big)   \ dm\\
 &+ O(j \max\{ \Crr{cpf}, \Crr{cmin}\}^j).
   \end{align*} 
It is easy to see that (\ref{est22}) holds, since 
$$D\theta_j(f^{j}(x))-R_j(x)=\frac{\theta_j(f^{j}(x))}{Df^{j}(x)}\sum_{n=0}^{j-1} \frac{D^2 f(f^n(x))}{Df(f^n(x))},$$
$\theta_j$ and $D^2f/Df$ are uniformly  bounded and $f$ is expanding. 
Finally
  \begin{align*}
  &\int D\theta_j\circ f^{j} \ \psi  \ dm=  \int D\theta_j \ L^{j} \psi  \ dm\\
  &=\int D\theta_j  \Big(\sum_{\lambda\in \Lambda_1}  \lambda^{j} \Phi_\lambda(\psi)   \Big)\ dm  + |D\theta_j|_{L^1(m)}O(\Crr{cpf}^j)\\
  &=\int D\theta_j  \Big(\sum_{\lambda\in \Lambda_1}  \lambda^{j} \Phi_\lambda(\psi)   \Big)\ dm   + O(j\Crr{cpf}^j).
  \end{align*} 
   \end{proof}
   
   \begin{proof}[Proof of Theorem \ref{tt}($A\implies B$)] By Lemma \ref{uy} we have
   \begin{align}\label{ioioe}
   &-\int R_j \psi \ dm + \sum_{n=0}^{j-1}   \int \Big( \frac{Dv+D^2f\hat{\alpha}_{j-n}}{Df} \Big) L^n \psi \ dm \\
   =&-\int D\theta_j  \Big(\sum_{\lambda\in \Lambda_1}  \lambda^{j} \Phi_\lambda(\psi)   \Big)\ dm  \nonumber \\
   +& \sum_{n=0}^{j-1}   \int  \Big[ \Big( \frac{Dv+D^2f\hat{\alpha}_{j-n}}{Df} \Big) -    \Big( \frac{Dv+D^2f\alpha}{Df} \Big)\Big]  \Big(\sum_{\lambda\in \Lambda_1}  \lambda^n \Phi_\lambda(\psi)   \Big) \ dm    \nonumber \\
   +& \int  \psi  \sum_{n=0}^{j-1}   \frac{Dv\circ f^n+D^2f \circ  f^n \ \alpha\circ f^n}{Df\circ f^n}\ dm + O(j\max\{\Crr{cmin},\Crr{cpf} \}^j). \nonumber
   \end{align} 
   Define the linear functional $T_j\colon L^1(m)\rightarrow \mathbb{C}$  as
   \begin{align*}
  &T_j(\psi)=-\int D\theta_j  \Big(\sum_{\lambda\in \Lambda_1}  \lambda^{j} \Phi_\lambda(\psi)   \Big)\ dm \\
  &+\sum_{n=0}^{j-1}   \int  \Big[ \Big( \frac{Dv+D^2f \ \hat{\alpha}_{j-n}}{Df} \Big) -    \Big( \frac{Dv+D^2f\alpha}{Df} \Big)\Big]  \Big(\sum_{\lambda\in \Lambda_1}  \lambda^n \Phi_\lambda(\psi)   \Big) \ dm.
   \end{align*} 
   Due  Lemma \ref{aad}.D and (\ref{est4}) we have $\sup_j |T_j|_{(L^1(m))^\star}< \infty$.  In particular 
  
   \begin{equation}\label{iii} \sup_j |T_j(\psi)|< \infty\end{equation} 
     By Theorem \ref{limi} we have that 
   \begin{align} \label{uuu}
 \int \alpha \ d\psi 
 &= -\alpha(a)\psi(a)+ \lim_j - \int R_j \psi \ dm + \sum_{n=0}^j   \int \frac{Dv+D^2f\hat{\alpha}_{j-n}}{Df} L^n \psi \ dm. 
 \end{align} 
Let
$$\phi =-\frac{Dv+D^2f \alpha}{Df}.$$
 Note that (\ref{ioioe}), (\ref{iii}) and (\ref{uuu}) imply that 
 $$\{ \int  \psi  \sum_{n=0}^{j -1} \phi\circ f^n \ dm\colon j\in \mathbb{N}\}$$
 is a bounded set for each $\psi\in BV$. Since
\begin{align*} \int  \psi  \sum_{n=0}^{j -1} \phi\circ f^n \ dm&= \int \phi \Big[\sum_{n=0}^{j-1} \Big( \sum_{\lambda\in \Lambda_1} \lambda^n \Phi_\lambda(\psi) + K^n(\psi) \Big)  \Big] \ dm \\
&= \int \phi \Big[ j \Phi_1(\psi)+ \sum_{\lambda\in \Lambda_1\setminus\{1\}}\frac{1- \lambda^{j}}{1- \lambda}   \Phi_\lambda(\psi)   \Big] \ dm +O(1)\\
&= j  \int \phi \Phi_1(\psi) \ dm + O(1),
 \end{align*} 
 this only occurs if 
$$\int \phi \Phi_1(\psi) \ dm =0$$
for every $\psi\in BV$. So G.R. and S. \cite[Lemma \ref{sum-vvv}]{um}  implies that  the limit
 $$\lim_j \int \psi \cdot  \Big( \sum_{n=0}^{pj}  \phi\circ f^n \Big) dm$$
exists. So the limit $T(\psi)=\lim_j T_{pj}(\psi)$ exists and it defines a bounded functional in $(L^1(m))^\star$.  In particular  there is $A\in L^\infty(m)$ such that 
$$T(\psi)=\int A \psi \ dm$$
for every $\psi\in BV$. We conclude that 
\begin{equation}  \label{bbb}  \int \alpha \ d\psi = - \alpha(a)\psi(a) + \int A \psi \ dm -  \int \psi \Big( \sum_{n=0}^{\infty}  \sum_{i=0}^{p-1}  \phi\circ f^{np+i} \Big) dm\end{equation}
for every $\psi \in BV$.  For $\psi \in C^\infty(I)$ we have
\begin{equation}  - \int \alpha \ D\psi \ dm  = \alpha(a)\psi(a)  -\int A \psi \ dm +  \int \psi \Big( \sum_{n=0}^{\infty}  \sum_{i=0}^{p-1}  \phi\circ f^{np+i} \Big) dm.\end{equation}

Taking $\psi=1_{[a,x]}$ we get
   $$\alpha(x)= \alpha(a)+ q(x)+  \int1_{[a,x]}  \Big( \sum_{n=0}^{\infty}  \sum_{i=0}^{p-1}  \phi\circ f^{np+i} \Big) dm.$$
   where $q(x)=-T(1_{[a,x]})$  is a Lipschitz function , since
   $$|q(y)-q(x)|=|T(1_{[x,y]})|\leq |A|_{L^\infty(m)}  |1_{[x,y]}|_{L^1(m)}=  |A|_{L^\infty(m)} |y-x|.$$
   On the other hand, by Theorem \ref{tt} ($B\implies A$) we have that  there is a piecewise $C^{1+\beta}$ function $w$ such that 
   $$w=\beta(f(x))-Df(x)\beta(x),$$
   where
   $$\beta(x)= G(x)+ \int1_{[a,x]}  \Big( \sum_{n=0}^{\infty}  \sum_{i=0}^{p-1}  \phi\circ f^{np+i} \Big) dm$$
and 
$$G(x)= \int \phi \Big( \sum_{\lambda\in \Lambda_1\setminus\{1\}}  \frac{1}{1-\lambda}  \Phi_\lambda(1_{[a,x]})    \Big) \ dm$$
when $\Lambda_1\setminus\{1\} \neq \emptyset$, or $G(x)=0$  otherwise.  Consequently if $H=\alpha-\beta=\alpha(a)+ q-G$ we have 
   $$v-w= H\circ f -Df \ H.$$
  Since $H$ is a Lipschitz function one can derive  this expression  to obtain
  $$ \frac{Dv-Dw+ D^2f \ H}{Df} =DH\circ f -DH.$$
It  follows that   $$DH\circ f -DH$$ is a piecewise $C^\beta$ function on $I$  and consequently $B.$ holds. 
   
\end{proof}

\begin{lemma} Let $\psi \in L^1(m)$. If  for some $n\in \mathbb{N}^\star$ we have that 
\begin{equation}\label{o} \int \Phi_{1,n}( \gamma)  \sum_{i=0}^{n-1} \psi \circ f^i \ dm=0 \text{ for every }  \gamma\in BV \end{equation}  
then for every  $n\in \mathbb{N}^\star$ we have that  (\ref{o}) holds.
\end{lemma}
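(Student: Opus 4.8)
The plan is to reduce the identity in (\ref{o}), for a fixed $n$, to a single condition that does not mention $n$ at all, namely $\int \Phi_1(\gamma)\psi\,dm=0$ for every $\gamma\in BV$; since that condition is visibly $n$-independent, the lemma follows immediately. First I would recall that $\Phi_{1,n}$ is the spectral projection associated with the eigenvalue $1$ of $L^n$, the transfer operator of $f^n$. From (\ref{ii}) one gets $L^i\Phi_\lambda=\lambda^i\Phi_\lambda$ for every $\lambda\in\Lambda_1$, hence $L^n=\sum_{\lambda\in\Lambda_1}\lambda^n\Phi_\lambda+K^n$ and the peripheral spectrum of $L^n$ is $\{\lambda^n:\lambda\in\Lambda_1\}$; since every $\lambda\in\Lambda_1$ is a root of unity ($\lambda^p=1$), the eigenvalue $1$ of $L^n$ is contributed exactly by those $\lambda$ with $\lambda^n=1$, so $\Phi_{1,n}=\sum_{\lambda\in\Lambda_1,\ \lambda^n=1}\Phi_\lambda$.

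Next I would use the defining duality of the transfer operator, $\int g\,(h\circ f^i)\,dm=\int (L^i g)\,h\,dm$ — legitimate here because $\Phi_{1,n}(\gamma)\in BV\subset L^\infty(m)$, so this is just the change-of-variables formula for the pushforward measure $f^i_*(\Phi_{1,n}(\gamma)\,dm)$ — to rewrite
\[
\int \Phi_{1,n}(\gamma)\sum_{i=0}^{n-1}\psi\circ f^i\,dm=\int\Big(\sum_{i=0}^{n-1}L^i\Phi_{1,n}(\gamma)\Big)\psi\,dm .
\]
Then, using $L^i\Phi_\lambda=\lambda^i\Phi_\lambda$ together with the geometric-sum identity $\sum_{i=0}^{n-1}\lambda^i=n$ when $\lambda=1$ and $\sum_{i=0}^{n-1}\lambda^i=(\lambda^n-1)/(\lambda-1)=0$ when $\lambda\ne1$ satisfies $\lambda^n=1$, I obtain
\[
\sum_{i=0}^{n-1}L^i\Phi_{1,n}(\gamma)=\sum_{\lambda\in\Lambda_1,\ \lambda^n=1}\Big(\sum_{i=0}^{n-1}\lambda^i\Big)\Phi_\lambda(\gamma)=n\,\Phi_1(\gamma).
\]
Hence $\int \Phi_{1,n}(\gamma)\sum_{i=0}^{n-1}\psi\circ f^i\,dm=n\int\Phi_1(\gamma)\,\psi\,dm$ for every $\gamma\in BV$ and every $n\in\mathbb{N}^\star$. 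Thus (\ref{o}) holding for one value of $n$ is equivalent to $\int\Phi_1(\gamma)\psi\,dm=0$ for all $\gamma\in BV$, which is independent of $n$, and therefore (\ref{o}) holds for every $n\in\mathbb{N}^\star$.

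I do not expect a serious obstacle: the computation is short once $\Phi_{1,n}$ is correctly identified as the eigenvalue-$1$ projection for $L^n$. The only point that deserves a word of care is justifying the duality identity when $\psi$ is merely in $L^1(m)$ rather than $L^\infty(m)$; this is handled by the fact that $\Phi_{1,n}(\gamma)$ and each $L^i\Phi_{1,n}(\gamma)$ lie in $BV\subset L^\infty(m)$, so both sides are finite and the pushforward change of variables applies verbatim.
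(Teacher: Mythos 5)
Your proof is correct and follows essentially the same route as the paper's: identify $\Phi_{1,n}$ (written $\Phi_1^n$ in the paper) with $\sum_{\lambda\in\Lambda_1,\ \lambda^n=1}\Phi_\lambda$, push $\sum_{i=0}^{n-1}\psi\circ f^i$ through the transfer-operator duality, apply $L^i\Phi_\lambda=\lambda^i\Phi_\lambda$, and collapse the geometric sums to get $n\int\Phi_1(\gamma)\psi\,dm$. The only difference is cosmetic: you spell out explicitly the $n$-independence of the resulting condition and the $L^1$/$L^\infty$ pairing justification, which the paper leaves implicit.
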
 
\begin{proof}  We have
$$\Phi_1^n=  \sum_{\beta \in \Lambda_1, \beta^n=1} \Phi_\beta,$$
so
\begin{align*}
&\int \Phi^n_1( \gamma)  \sum_{i=0}^n \psi \circ f^i \ dm\\
&=\int \Big( \sum_{\beta \in \Lambda_1, \beta^n=1} \Phi_\beta (\gamma)\Big) \Big(    \sum_{i=0}^{n-1} \psi \circ f^i \Big) \ dm\\
&=\int  \psi  \sum_{i=0}^{n-1}  \sum_{\beta \in \Lambda_1, \beta^n=1} L^i(\Phi_\beta (\gamma)) \ dm\\
&=\int  \psi   \sum_{\beta \in \Lambda_1, \beta^n=1}  \sum_{i=0}^{n-1} \beta^i \Phi_\beta (\gamma) \ dm\\
&=\int  \psi \Big( n  \Phi_1 (\gamma) + \sum_{\beta \in \Lambda_1, \beta^n=1, \beta\neq 1} \frac{1-\beta^n}{1-\beta} \Phi_\beta (\gamma)\Big)\ dm\\
&=n \int  \psi  \Phi_1 (\gamma)\ dm.
\end{align*}
This completes the proof.
\end{proof}

\section{Deformations}   
A family $f_t\in  \mathcal{B}^k_{exp}(C)$, with $t\in (a,b)$,  is a $C^j$ family if $t\mapsto f_t$ is a $C^j$ function from $(a,b)$ to $ \mathcal{B}^k(C)$.  We say that a  $C^j$ family is {\it Lasota-Yorke stable} if for every compact $K\subset (a,b)$  there is $\Crr{b}$ and $\Crr{c}$  such that 
\begin{equation}\label{lys} |L_t \gamma|_{BV}\leq    \Crr{c} |\gamma|_{BV}+ \Crr{b}|\gamma|_{L^1}. \end{equation}
for every $\gamma \in BV$ and for every $t \in (a,b)$. Here $L_t$ is the Ruelle-Perron-Frobenius operator of $f_t$.    We say that $f\in \mathcal{B}^k_{exp}(C)$ is {\it locally Lasota-Yorke stable} if there is a open neighbourhood of $f$ in $\mathcal{B}^k_{exp}(C)$ such that all maps in there satisfies the Lasota-Yorke inequality on $BV$ with the same constants.

A $C^j$-{\it deformation } of $f \in \mathcal{B}^k_{exp}(C)$ is a $C^j$ family $f_t\in  \mathcal{B}^k_{exp}(C)$ such that $f_t$ is topologically conjugated to $f$ for every $t\in (a,b)$.

Let
$$\hat{C}=\{(c_0,+),(c_n,-)\} \cup (\{c_1,\dots,c_{n-1}\}\times \{+,-\}).$$
The set of critical relations $R_f$ of $f\in  \mathcal{B}^k_{exp}(C)$ is 
$$R_f=\{(x,y,k)\colon \ x,y \in \hat{C}, k\in \mathbb{N}^\star, \ and \ f^k(x)=y   \}.$$

For $f\in  \mathcal{B}^k_{exp}(C)$  let 
$$\mathcal{O}^f=\cup_{i\geq 0} f^i(\hat{C})$$
and $$\ell^\infty(\mathcal{O}^f)=\{ v\colon \mathcal{O}^f\rightarrow \mathbb{R}\colon \sup_{a\in \mathcal{O}^f} |v(a)|< \infty\}.$$
Then we can define the linear  functional $$J(f,x,\cdot)\colon \ell^\infty(\mathcal{O}^f) \rightarrow \mathbb{R},$$  with  $x\in \hat{C}$ as 
$$J(f,x,v)= \sum_{i=0}^{k-1} \frac{v(f^i(x))}{Df^i(f(x))},$$
if $f^i(x)\not\in \hat{C}$ for $1\leq i< k$ and $f^k(x)\in \hat{C}$, and
$$J(f,x,v)= \sum_{i=0}^{\infty} \frac{v(f^i(x))}{Df^i(f(x))},$$
if $f^i(x)\not\in \hat{C}$ for $i\geq 1$. 

In particular $J(f,x,v)$ is well defined for  $v\in \mathcal{B}^k(C)$.

\begin{theorem}[Characterization of infinitesimal deformations] \label{infc} Let $f\in  \mathcal{B}^k_{exp}(C)$ and $v\in \mathcal{B}^k(C)$. The following statements are equivalent.

\begin{itemize} 
\item[A.]   We have $J(f,c,v)=0$ for every $c\in \hat{C}$.

\item[B.]  There is a continuous function $\alpha\colon I \rightarrow \mathbb{R}$ such that 
$$v=\alpha\circ f - Df\cdot \alpha$$
on $I\setminus C$ and $\alpha(c)=0$ for every $c\in C$. 
\item[C.]  There is a continuous function $\alpha\colon I \rightarrow \mathbb{R}$ such that 
\begin{equation} \label{ioio} v=\alpha\circ f - Df\cdot \alpha\end{equation} 
on $I\setminus C$ and $\alpha(c)=0$ for every $c\in C$. Moreover there is $\phi\in C^\beta$ such that 
\begin{equation} \label{rep} \alpha(x)= g(x)+ \int 1_{[c_0,x]}  \sum_{n=0}^{\infty}  \sum_{i=0}^{p-1}  \phi\circ f^{np+i}  \ dm,\end{equation} 
where $p=p(f)$ and $g$ is a Lipchitz function. Indeed we can choose
 $$\phi(x)=\frac{Dv+D^2f\alpha}{Df}.$$
\item[D.]  There is a Log-Lipchitz function  $\alpha\colon I \rightarrow \mathbb{R}$ such that 
$$v=\alpha\circ f - Df\cdot \alpha$$
on $I\setminus C$ and $\alpha(c)=0$ for every $c\in C$. 
\end{itemize} 
Additionally, the constants in the Log-Lipchitz condition on $\alpha$ and in  the Lipchitz  constant of $g$ depends only on the constants of the Lasota-Yorke inequality, $p(f)$, $|f|_{2}$ and $|v|_1$. 
\end{theorem}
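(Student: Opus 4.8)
The plan is to prove the chain of implications $D \implies C \implies B \implies A \implies D$, leveraging Theorem \ref{tt} as the main engine for the analytic content. The implication $C \implies D$ is immediate: the integral term in \eqref{rep} is exactly the object handled in the $B \implies A$ direction of Theorem \ref{tt} (via the Cesàro-averaged Birkhoff sums in G.R.\ and S.\ \cite[Theorem \ref{sum-conv2}]{um}), which is shown there to be Log-Lipschitz, and adding the Lipschitz function $g$ keeps it Log-Lipschitz. Likewise $D \implies B$ and $D \implies C$ trivially weaken the regularity, so the real content is producing the solution $\alpha$ in the first place and pinning down its representation and its vanishing on $C$.

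First I would prove $A \implies C$. Given $v \in \mathcal{B}^k(C)$ with $J(f,c,v) = 0$ for all $c \in \hat C$, I define $\alpha$ by the formal series $\alpha(x) = -\sum_{i=0}^\infty v(f^i(x))/Df^{i+1}(x)$, which converges uniformly and absolutely because $f$ is expanding ($\sup |Df|^{-1} = \Crr{cmin} < 1$) and $v$ is bounded; a direct telescoping check shows it solves \eqref{ioio} on $I \setminus C$. The vanishing condition $J(f,c,v) = 0$ is precisely what forces the continuous extension of $\alpha$ across each $c \in C$ to take the value $0$ there — this is the computation underlying \eqref{ppp}–\eqref{ppp} in the proof of Lemma \ref{aad}, read in the direction: at a critical point $c$ with $f^k(c) \in \hat C$ (or orbit never returning to $\hat C$), the two one-sided limits of $\alpha$ at $c$ agree iff the corresponding truncated/infinite sums $J(f,c^\pm,v)$ vanish, and under A. they both equal $0$. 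Now that I have a continuous $\alpha$ solving the twisted cohomological equation with $v \in \mathcal{B}^{1+\beta}(C) \subset \mathcal{B}^k(C)$ when $k \geq 1$ (note $k \geq 2$ is not needed here, but Theorem \ref{tt} wants $f \in \mathcal{B}^{2+\beta}_{exp}(C)$), I invoke Theorem \ref{tt} ($A \implies B$): this yields $\alpha = H + G + \int 1_{[a,x]} \sum_n \sum_i \phi \circ f^{np+i}\, dm$ with $H, G$ Lipschitz and $\phi = (Dv + D^2 f\,\alpha)/Df \in \mathcal{B}^\beta(C)$. Setting $g = H + G$ gives \eqref{rep}; the vanishing $\alpha(c) = 0$ is inherited from the construction. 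This also gives $C \implies B$ trivially (drop \eqref{rep}), and $B \implies A$: if $\alpha$ is continuous, solves \eqref{ioio}, and vanishes on $C$, then evaluating the defining relation along the forward orbit of each $c \in \hat C$ and using continuity at the critical points recovers $J(f,c,v) = 0$ — again this is the identity \eqref{ppp} run in reverse.

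I expect the main obstacle to be two-fold. The first is the bookkeeping at the critical set: $\hat C$ has two-sided points, critical orbits may hit $\hat C$ again (finite critical relations) or not, and one must check that the continuous extension of the formal series $\alpha$ to all of $I$ is consistent — the value of $\alpha$ at a point that is simultaneously on several forward critical orbits must be well-defined, which is exactly where $J(f, \cdot, v) = 0$ on all of $\hat C$ (not just at a single point) is used. The second, more serious point is the \emph{uniformity of constants} asserted in the last sentence: I would need to trace through Theorem \ref{tt}, Lemma \ref{aad}, and \cite[Theorem \ref{sum-conv2}]{um} to verify that the Log-Lipschitz norm of $\alpha$ and the Lipschitz norm of $g$ depend only on the Lasota–Yorke constants $\Crr{c}, \Crr{b}$ (equivalently $\Crr{ff}, \Crr{0}, \Crr{cpf}, \Crr{cd}$), on $p = p(f)$, on $|f|_2$, and on $|v|_1$. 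The key is that the spectral decomposition \eqref{ii} and the bounds on the projections $\Phi_\lambda$ and the remainder $K$ are all controlled by the Lasota–Yorke constants (this is standard Hennion/Ionescu-Tulcea–Marinescu theory), that the distortion constants $\Crr{dist}, \Crr{distp}$ in Lemma \ref{bvpartition} depend only on $|f|_2$ and $\Crr{cmin}$, and that the functional $T$ producing $A \in L^\infty$ in the proof of Theorem \ref{tt} has norm bounded in these same quantities via Lemma \ref{aad}.D. No new analytic input is needed — only a careful audit — so the uniformity statement follows, completing the proof.
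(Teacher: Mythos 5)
Your proposal follows essentially the same route as the paper, with the same three load-bearing steps: constructing $\alpha$ from the formal series and using the vanishing of $J(f,c,v)$ across $\hat C$ to obtain a continuous extension with $\alpha|_C=0$; invoking Theorem \ref{tt} (together with the companion paper's convergence results) to upgrade from ``continuous'' to the distributional Birkhoff-sum representation and hence to Log-Lipschitz; and, in the reverse direction, reading $J(f,c,v)=0$ off the twisted cohomological equation along forward critical orbits.

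One imprecision worth flagging: you assert that the series $\alpha(x)=-\sum_{i\geq 0} v(f^i(x))/Df^{i+1}(x)$ ``converges uniformly and absolutely'' on $I$ and ``solves \eqref{ioio} on $I\setminus C$.'' In fact the series is not well defined on the countable dense set $Q$ of points whose forward orbit hits $C$, because $v(f^{i+1}(x))$ is ambiguous once $f^i(x)\in C$. The paper sidesteps this by defining $\alpha(x)$ as the \emph{truncated} sum up to the first hitting time $M(x)$ of $C$, setting $\alpha|_C=0$, and then showing (via the bookkeeping with the hitting-time sequences $n_k^\pm$ that you correctly anticipate) that the lateral limits of $\alpha$ at any $b$, computed from the full infinite series along $b^\pm$, collapse to the truncated value precisely because all the tails are multiples of $J(f,f^{n_k^\pm}(b^\pm),v)=0$. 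The cohomological equation then only holds a priori on $I\setminus Q$, and one extends it to $I\setminus C$ by continuity of $\alpha$ and piecewise continuity of $v$, $Df$. Your ``direct telescoping check'' argument should be phrased on $I\setminus Q$ and then closed by this density argument. Also, your announced chain ``$D\implies C\implies B\implies A\implies D$'' is inconsistent with the implications you actually sketch ($C\implies D$, $D\implies B$, $A\implies C$, $B\implies A$); the content is fine, but the bookkeeping should be straightened out. A last small remark: the paper chooses to prove $D\implies A$ rather than your $B\implies A$, but the argument uses only continuity and boundedness of $\alpha$, so your variant is equally valid and slightly more economical.
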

\begin{proof}  We already proved that $B$, $C$ and $D$ are equivalent. \\

\noindent  $\it A\implies B$.  if $x\in C$ define $\alpha(x)=0$ and $M(x)=0$.  If $f^k(x)\not\in C$ for every $k\in \mathbb{N}$ let $M(x)=+\infty$.
Otherwise  $x\not\in C$ and  there is $k(x)\geq 1$ such that $f^{M(x)}(x)\in C$ and $f^i(x)\not\in C$ for $0\leq i< M(x)$. Define
$$\alpha(x)=-\sum_{j=0}^{M(x)-1} \frac{v(f^j(x))}{Df^{j+1}(x)}.$$
Since the set of points $Q$  that eventually arrive at $C$ is countable, it is easy to verify that (\ref{ioio}) holds for all points $x$ except maybe those in  $Q$.  So it is enough to show that $\alpha$ is continuous to complete the proof.  One can see that 
$$\lim_{x\rightarrow b^+} \alpha(x)= -\sum_{j=0}^\infty \frac{v(f^j(b^+))}{Df^{j+1}(b^+)}.$$
Let $b\in [-1,1]$. If $f^j(b^+)\not\in \hat{C}$ for every $j$ define $n_0^+=+\infty$, $n_1^+=+\infty$ and $k_0^+=1$. Otherwise let  $n_0^+<n_1^+< n_2^+< \dots$, with $k< k_0^+$, where $k_0^+ \in \mathbb{N}\cup \{+\infty\}$, be the sequence of all times $j$ such that $f^{j}(b^+)\in \hat{C}$. If $k_0^+\in \mathbb{N}$ define $n_{k_0^+}^+=+\infty$.  Note that in all cases $n_0^+=M(b)$. Then
\begin{align*} \sum_{j=0}^\infty \frac{v(f^j(b^+))}{Df^{j+1}(b^+)}&=\sum_{j=0}^{M(x)-1} \frac{v(f^j(b))}{Df^{j+1}(b)}+ \sum_{k=0}^{k_0^+-1}  \sum_{j=n_k^+}^{n_{k+1}^+-1}  \frac{v(f^j(b^+))}{Df^{j+1}(b^+)}\\
&=\sum_{j=0}^{M(x)-1} \frac{v(f^j(b))}{Df^{j+1}(b)}+ \sum_{k=0}^{k_0^+-1} \frac{1}{Df^{n_k^++1}(b^+)} \sum_{j=0}^{n_{k+1}^+-n_k^+-1}  \frac{v(f^j(f^{n_k^+}(b^+)))}{Df^{j}(f(f^{n_k^+}(b^+))}\\
&=\sum_{j=0}^{M(x)-1} \frac{v(f^j(b))}{Df^{j+1}(b)}+ \sum_{k=0}^{k_0^+-1} \frac{1}{Df^{n_k^++1}(b^+)} J(f,f^{n_k^+}(b^+) ,v)\\
&=\sum_{j=0}^{M(x)-1} \frac{v(f^j(b))}{Df^{j+1}(b)}=\alpha(b).
\end{align*} 
Here if no natural number satisfies  the condition in the sum,  consider its value to be zero.

\noindent In an analogous way,  If $f^j(b^-)\not\in \hat{C}$ for every $j$ define $n_0^-=+\infty$, $n_1^-=+\infty$ and $k_0^-=1$. Otherwise $n_0^-<n_1^-< n_2^-< \dots$, with $k< k_0^-$, where $k_0^-\in \mathbb{N}\cup \{+\infty\}$, be the sequence of all times $j$ such that $f^{j}(b^-)\in \hat{C}$. If $k_0^-\in \mathbb{N}$ define $n_{k_0}^-=+\infty$. Again, in all cases we have  $n_0^-=M(b)$.  Then one can similarly conclude that 
$$\lim_{x\rightarrow b^-} \alpha(x)  =-\sum_{j=0}^{M(x)-1} \frac{v(f^j(b))}{Df^{j+1}(b)}=\alpha(b).$$
So we conclude that $\alpha$ is continuous at $b$. Since we concluded that $\alpha$ is continuous, it follows from Theorem \ref{tt} that $B.$ holds. \\

\noindent  $\it D\implies A$. Note that for every $k$ we have

\begin{equation} \label{oo} \sum_{j=0}^{k-1}   Df^{k-j-1}(f^{j+1}(x))\cdot  v(f^j(x))  =\alpha\circ f^k(x) - Df^k(x) \cdot \alpha(x)\end{equation} 
whenever $x$ is not a critical point of $f^k$. 
Let $c\in \hat{C}$ and suppose that there is $c'\in \hat{C}$ and $M\geq 1$  such that $f^M(c)=c'$ and $f^k(c)\not\in \hat{C}$ for every $1\leq k< M$. Since $\alpha(c)=\alpha(c')=0$, when $x$ tends to $c$ in (\ref{oo}) with $k=M$ we obtain
$$ \sum_{j=0}^{M-1}   Df^{M-j-1}(f^{j+1}(c))\cdot  v(f^j(c))  =0.$$
If we divide by $Df^{M-1}(c)$ we get
$$\frac{J(f,c,v)}{Df(c)}=  \sum_{j=0}^{M-1}   \frac{ v(f^j(c)) }{Df^{j+1}(c)}  =0,$$
so $J(f,c,v)=0$. On the other hand if $f^k(c)\not\in \hat{C}$ for every $k$. When we divide   (\ref{oo}) by  $Df^{k-1}(c)$  and $x$ tends to $c$ we get
 $$\sum_{j=0}^{k-1}   \frac{ v(f^j(c)) }{Df^{j+1}(c)}  = \frac{\alpha(f^k(c))}{Df^k(c)} -\alpha(c).$$
Since $\alpha(c)=0$ and $\alpha$ is bounded we get 
 $$\frac{J(f,c,v)}{Df(c)}=\sum_{j=0}^{\infty}   \frac{ v(f^j(c)) }{Df^{j+1}(c)}  = 0.$$
 so $J(f,c,v)=0$. 
\end{proof} 

The representation of $\alpha$ in Theorem \ref{infc}.B may not be unique. However 
\begin{corollary} Let $\alpha$ and  $\phi\in \mathcal{B}^\beta(C)$ be as in Theorem \ref{infc}.C.  Then there is $\gamma \in L^\infty(I)$ such that 
$$\phi -  \frac{Dv-  D^2f\cdot \alpha}{Df}=\gamma\circ f -\gamma.$$
\end{corollary}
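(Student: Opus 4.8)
The plan is to compare $\phi$ with the explicit admissible observable $\phi_0:=(Dv+D^2f\,\alpha)/Df$ and to show that $\eta:=\phi-\phi_0$ is a coboundary with a bounded transfer function; this is an $L^\infty$ Livsic statement \emph{in the sense of distributions}. First I would note that, by the last line of Theorem~\ref{infc}.C, $\phi_0$ realizes the representation \eqref{rep} for the \emph{same} $\alpha$, say with a Lipschitz function $g_0$; subtracting the two instances of \eqref{rep} gives
\begin{equation*}
\int 1_{[c_0,x]}\Big(\sum_{n=0}^{\infty}\sum_{i=0}^{p-1}\eta\circ f^{np+i}\Big)\,dm=g_0(x)-g(x)=:G(x),
\end{equation*}
which is Lipschitz in $x$. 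Thus the distributional Birkhoff sum $\mathcal{S}:=\sum_n\sum_i\eta\circ f^{np+i}$ of $\eta$ --- a well defined functional on $BV$, since $\phi$ and $\phi_0$, and hence $\eta$, satisfy \eqref{kl} --- has a \emph{Lipschitz} primitive, and is therefore represented by $DG\in L^\infty(m)$: one has $\mathcal{S}(\psi)=\int\psi\,DG\,dm$ for every $\psi\in BV$. (That a distributional Birkhoff sum with a Lipschitz primitive has an $L^\infty$ density is the one nonformal point; see the end of this proof.)

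Next I would feed this into the trivial telescoping identity for the genuine partial sums $S_N:=\sum_{k=0}^{pN-1}\eta\circ f^k$, namely $S_N\circ f-S_N=\eta\circ f^{pN}-\eta$. Pairing it with an arbitrary $\psi\in BV$ and using the adjunction $\int\psi\,(w\circ f)\,dm=\int (L\psi)\,w\,dm$, the left side equals $\int(L\psi)S_N\,dm-\int\psi S_N\,dm$, which converges as $N\to\infty$ to $\mathcal{S}(L\psi)-\mathcal{S}(\psi)=\int\psi\,(DG\circ f-DG)\,dm$. On the right side, $\int\psi\,(\eta\circ f^{pN})\,dm=\int(L^{pN}\psi)\,\eta\,dm$, and since $\lambda^{pN}=1$ for $\lambda\in\Lambda_1$ while $|K^{pN}\psi|_{BV}\to0$, we have $L^{pN}\psi\to\sum_{\lambda\in\Lambda_1}\Phi_\lambda(\psi)$ in $BV$; hence the right side converges to $\sum_{\lambda\in\Lambda_1}\int\eta\,\Phi_\lambda(\psi)\,dm-\int\psi\,\eta\,dm$, and the term $\lambda=1$ drops out by \eqref{kl}. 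This yields, for every $\psi\in BV$,
\begin{equation*}
\int\psi\,\big(DG\circ f-DG+\eta\big)\,dm=\sum_{\lambda\in\Lambda_1\setminus\{1\}}\int\eta\,\Phi_\lambda(\psi)\,dm .
\end{equation*}

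Finally I would recognize the right-hand side as a coboundary. For $\lambda\in\Lambda_1$ the adjoint $\Phi_\lambda^*\colon L^\infty(m)\to L^\infty(m)$ has finite-dimensional range, and $\int\eta\,\Phi_\lambda(\psi)\,dm=\int(\Phi_\lambda^*\eta)\,\psi\,dm$. From $\Phi_\lambda L=\lambda\Phi_\lambda$ (by the properties in Section~\ref{layo}) one gets $L^*\Phi_\lambda^*=\lambda\Phi_\lambda^*$, and since $L^*$ is the Koopman operator $w\mapsto w\circ f$, every $w$ in the range of $\Phi_\lambda^*$ satisfies $w\circ f=\lambda w$, so $w=\frac{1}{\lambda-1}(w\circ f-w)$ as $\lambda\neq1$. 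Applying this to $w=\Phi_\lambda^*\eta$ and summing over $\lambda\in\Lambda_1\setminus\{1\}$, the right-hand side equals $\int\psi\,(\nu\circ f-\nu)\,dm$ with $\nu:=\sum_{\lambda\in\Lambda_1\setminus\{1\}}\frac{1}{\lambda-1}\Phi_\lambda^*\eta\in L^\infty(m)$. The identity $\int\psi(DG\circ f-DG+\eta)\,dm=\int\psi(\nu\circ f-\nu)\,dm$ now holds for all $\psi\in BV$, hence for all $\psi\in L^1(m)$ by density, so $DG\circ f-DG+\eta=\nu\circ f-\nu$ a.e.; that is, $\eta=\gamma\circ f-\gamma$ with $\gamma:=\nu-DG\in L^\infty(m)$, which is the asserted identity.

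The step I expect to be the real obstacle is the one flagged in the first paragraph: that $\mathcal{S}$ is \emph{represented by an $L^\infty$ density} rather than being a genuine distribution. A bounded functional on $BV$ whose primitive is Lipschitz need not have an $L^\infty$ density (for instance a Dirac mass has a constant, hence Lipschitz, primitive), so the Lipschitz bound alone does not suffice. One either invokes the finer description of these distributional Birkhoff sums in \cite{um}, which couples the regularity of the primitive to the integrability of the sum, or argues directly: approximate $\psi\in BV$ by step functions with uniformly bounded $BV$-norm and use the compact embedding $BV\hookrightarrow L^1(m)$ together with the $BV$-spectral gap of $K$ to conclude that $\mathcal{S}$ extends continuously to $L^1(m)$, hence is given by an $L^\infty$ density, which must then coincide a.e.\ with $DG$.
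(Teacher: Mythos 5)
Your proof is correct, but it takes a genuinely different route from the paper's. The paper feeds $\eta:=\phi-\tilde\phi$ back into Theorem~\ref{tt}~($B\Rightarrow A$): since $\eta$ satisfies \eqref{kl} and its Birkhoff-sum primitive $b=g_0-g$ is Lipschitz, that implication produces a Lipschitz infinitesimal conjugacy $\hat\alpha$ and a $w\in\mathcal{B}^{1+\beta}(C)$ with $w=\hat\alpha\circ f-Df\cdot\hat\alpha$; differentiating this identity a.e.\ yields the Livsic equation with $\gamma=\pm D\hat\alpha\in L^\infty$. You instead work directly at the level of the distributional Birkhoff sum: you pair the telescoping identity $S_N\circ f-S_N=\eta\circ f^{pN}-\eta$ with $\psi\in BV$, pass to the limit using the spectral decomposition of $L$, and convert the peripheral contribution $\sum_{\lambda\neq1}\Phi_\lambda^*\eta$ into an explicit coboundary via the relation $(\Phi_\lambda^*\eta)\circ f=\lambda\,\Phi_\lambda^*\eta$. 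What your route buys is a completely explicit $\gamma=\nu-DG$ with $\nu=\sum_{\lambda\neq1}(\lambda-1)^{-1}\Phi_\lambda^*\eta$ and independence from the $B\Rightarrow A$ machinery; what the paper's route buys is brevity, since Theorem~\ref{tt} has already done the work of turning a Lipschitz primitive into a cohomological equation. One small correction to the caveat you raise yourself: the Dirac mass $\delta_a$ has primitive $x\mapsto 1_{[a,b]}(x)$, a step function rather than a constant, so only a boundary atom at $a=c_0$ would survive the Lipschitz test --- but your underlying concern (that a Lipschitz primitive alone does not force an $L^\infty$ density) is sound, and you resolve it the right way: invoke either the results of \cite{um} or the $L^1$-boundedness of $\mathcal{S}$, which is exactly what the proof of Theorem~\ref{tt}($A\Rightarrow B$) establishes when it exhibits $T$ as a bounded functional on $L^1(m)$ with density $A\in L^\infty(m)$.
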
 
\begin{proof} By Theorem \ref{infc} we have
\begin{equation*}  \tilde{g}(x)+ \int 1_{[c_0,x]}  \sum_{n=0}^{\infty}  \sum_{i=0}^{p-1}  \tilde{\phi} \circ f^{np+i}  \ dm=       \alpha(x)= g(x)+ \int 1_{[c_0,x]}  \sum_{n=0}^{\infty}  \sum_{i=0}^{p-1}  \phi\circ f^{np+i}  \ dm,\end{equation*} 
where $g, \tilde{g}$ are   Lipschitz functions  and
$$\tilde{\phi}= \frac{Dv+  D^2f\cdot \alpha}{Df}.$$
In particular
$$b(x) = \int 1_{[c_0,x]}  \sum_{n=0}^{\infty}  \sum_{i=0}^{p-1}  (\phi-  \tilde{\phi}) \circ f^{np+i}  \ dm$$
is a Lipschitz function. By Theorem \ref{infc} there is a Lipschitz function $\hat{g}$ and $w\in \mathcal{B}^{1+\beta}(C)$ such that the  Lipschitz function $\hat{\alpha} = \hat{g}+ b$ satisfies
$$w=\hat{\alpha}\circ f - Df\cdot \hat{\alpha}.$$
Deriving this expression with respect to $x$ we obtain
$$Dw =    D\hat{\alpha} \circ f \cdot Df   - D^2f \cdot \hat{\alpha} - Df \cdot D\hat{\alpha},$$
so $D\hat{\alpha}\in L^\infty(I)$ satisfies the Livsic cohomological equation
$$\frac{Dw+D^2f\cdot  \hat{\alpha}}{Df} =   D\hat{\alpha} \circ f  -D\hat{\alpha}.$$
\end{proof}

The following theorem  characterizes deformations 
\begin{theorem}[Characterization of deformations]\label{char}  Let  $f_t\in  \mathcal{B}^k_{exp}(C)$, with $t\in (c,d)$, be a  $C^j$ family. The following statements are equivalent.
\begin{itemize}
\item[A.] The set of critical relations $R_{f_t}$ does not depend on $t$.
\item[B.] There exists a family of homeomorphisms $h_t\colon I \rightarrow I$ such that $h_t\circ f_0= f_t\circ h_t$, that is, $f_t$ is a deformation.
\item[C.] There is a family of conjugacies $h_t$ as in $B$, such that 
$$(x,t)\mapsto h_t(x)$$
is a continuous function and for each $x\in I$ we have that $h_t(x)$ is $C^{k-1+Lip}$ on the variable $t$.
\item[D.] We have that $J(f_t,x,\partial_sf_s|_{s=t})=0$ for every $i\leq n$, $x\in \hat{C}$  and $t \in (c,d)$. Moreover the family $f_t$ is Lasota-Yorke stable. 
\item[E.] For every  $t\in (a,b)$ there is a  continuous function $\alpha_t\colon [a,b]\rightarrow \mathbb{R}$ such that 
\begin{equation} \label{tce3} \partial_sf_s|_{s=t}  =\alpha_t\circ f_t  -Df_t\cdot \alpha_t\end{equation} 
on $\hat{I}$  satisfying $\alpha(c_i)=0$ for every $0\leq i\leq n$. Indeed $\alpha_t$ is Log-Lipchitz continuous and the  constants in the Log-Lipchitz condition are such that 
$$\sup_{t\in J} |\alpha_t|_{LL} < \infty$$ for  compact  intervals $J$ of $(c,d)$.  For every $x\in [a,b]$ there is a  the unique solution $h_t(x)$ of the initial value problem 
\begin{equation} \label{edo3} \begin{cases} \dot{h}_t(x)=\alpha_t(h_t(x)) \\ h_0(x)=x.   \end{cases} \end{equation}
then we have 
\begin{itemize} 
\item  $h_t(x)$ is defined for every $(x,t)\in [a,b]\times (c,d)$ and $h([a,b])=[a,b]$.
\item For every $t\in (c,d)$  we have that  $h_t\colon [a,b]\rightarrow [a,b]$  is a homeomorphism,
\item We have $h_t\circ f_0 = f_t\circ h_t$ on $[a,b]$.
\item For every compact interval $J\subset (c,d)$ there is $\Crr{w1}\geq 0$ such that 
\begin{equation} \label{hol1}   |h_t\circ h_{t_0}^{-1}(x)- h_t\circ h_{t_0}^{-1}(x)|\leq e^{1-exp(-\Crr{w1}|t-t_0|)} |x-y|^{exp(-\Crr{w1}|t-t_0|)}.  \end{equation} 
and
\begin{equation} \label{hol2}   |h_{t_0}\circ h_{t}^{-1}(x)- h_{t_0}\circ h_{t}^{-1}(x)|\leq e^{1-exp(-\Crr{w1}|t-t_0|)} |x-y|^{exp(-\Crr{w1}|t-t_0|)}.  \end{equation}
%(1- O(|t-t_0|))|x-y|^{1-O(|t-t_0|)} \leq    |h_t\circ h_{t_0}(x)- h_t\circ h_{t_0}(x)|.\end{equation} 
%\item Given $\beta\in (0,1)$ we have $h_t\circ h_{t_0}\in C^\beta[-1,1]$ for every $t$ close enough to $t_0$  and 
%$$t\mapsto h_t\circ h_{t_0}$$
%is a $C^{k-1+Lip}$ function if we take the $C^\beta$ norm in its image. 
\end{itemize} 
\end{itemize} 
\end{theorem}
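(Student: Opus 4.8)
The plan is to run the cycle $B\Rightarrow A\Rightarrow D\Rightarrow E\Rightarrow C\Rightarrow B$, which visits all five statements. Two of the steps are soft. For $C\Rightarrow B$ there is nothing to prove, $C$ already producing conjugacies. For $B\Rightarrow A$: a conjugacy $h_t$ fixes $\hat C$ pointwise (it preserves the singular set, which is $\hat C$, and its order), so $f_0^k(x)=y\in\hat C$ yields $f_t^k(x)=h_t(y)=y$, i.e.\ $R_{f_t}=R_{f_0}$. I also use throughout that a $C^j$ family in $\mathcal B^k_{exp}(C)$ is automatically Lasota--Yorke stable: on a compact $K\subset(c,d)$ one has $\inf_{t\in K}\inf_I|Df_t|>1$ and $\sup_{t\in K}|f_t|_2<\infty$ by continuity of $t\mapsto f_t$ and compactness, and the constants in (\ref{lys}) depend only on these; so the ``moreover'' in $D$ is free.

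For $A\Rightarrow D$ the only content is $J(f_t,c,v_t)=0$ for $c\in\hat C$, where $v_t=\partial_sf_s|_{s=t}$. Fix $c$; by $A$ there is a $t$-independent dichotomy. \emph{Case (b): there is a least $k\ge1$ with $f_0^k(c)\in\hat C$.} By $A$ the same $k$ and target serve every $t$, so $t\mapsto f_t^k(c)$ is constant; differentiating, the chain rule gives $\partial_t f_t^k(c)=\sum_{j<k}Df_t^{\,k-1-j}(f_t^{j+1}(c))\,v_t(f_t^j(c))=0$, and since $Df_t^k(c)=Df_t^{\,j+1}(c)\,Df_t^{\,k-1-j}(f_t^{j+1}(c))$ this equals $Df_t(c)\sum_{j<k}v_t(f_t^j(c))/Df_t^{\,j+1}(c)=J(f_t,c,v_t)$. \emph{Case (a): $f_0^k(c)\notin\hat C$ for all $k\ge1$}, hence $f_t^k(c)\notin\hat C$ for all $k\ge1$ and all $t$. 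Then each $f_t^i(c)$ ($i\ge0$) stays, as $t$ runs over the connected set $(c,d)$, in one fixed monotonicity interval of $f_t$, so $\mathrm{sign}\,Df_t^k(c)=\prod_{i<k}\mathrm{sign}\,Df_t(f_t^i(c))$ is independent of $t$. Put $S_k(t)=\sum_{j<k}v_t(f_t^j(c))/Df_t^{\,j+1}(c)$; as in (b), $\partial_t f_t^k(c)=Df_t^k(c)\,S_k(t)$, and by expansion $S_k\to\Phi$ uniformly on compacts, $\Phi(t)=J(f_t,c,v_t)/Df_t(c)$ continuous. If $\Phi(t_*)\ne0$, choose $[t_*-\delta,t_*+\delta]\subset(c,d)$ with $|\Phi|\ge\tfrac12|\Phi(t_*)|$ there and $k_0$ with $|S_k-\Phi|<\tfrac14|\Phi(t_*)|$ there for $k\ge k_0$; then $\partial_t f_t^k(c)=Df_t^k(c)S_k(t)$ has constant sign on the interval, $t\mapsto f_t^k(c)$ is monotone, and
$$\int_{t_*-\delta}^{t_*+\delta}\bigl|\partial_t f_t^k(c)\bigr|\,dt=\bigl|f_{t_*+\delta}^k(c)-f_{t_*-\delta}^k(c)\bigr|\le b-a,$$
while the left side is $\ge 2\delta\,\kappa^k\,\tfrac14|\Phi(t_*)|$ with $\kappa=\inf\{|Df_t(x)|:x\in I,\ |t-t_*|\le\delta\}>1$, a contradiction as $k\to\infty$. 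Hence $\Phi\equiv0$, i.e.\ $J(f_t,c,v_t)=0$.

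For $D\Rightarrow E$: Theorem~\ref{infc} ($A\Rightarrow D$ there) applied to each $(f_t,v_t)$ gives a Log-Lipschitz $\alpha_t$ with $\alpha_t(c_i)=0$ solving (\ref{tce3}), with Log-Lipschitz norm controlled (last sentence of Theorem~\ref{infc}) by the Lasota--Yorke constants, $p(f_t)$, $|f_t|_2$, $|v_t|_1$; on a compact $J$ these are bounded, using that $p(f_t)$ is locally constant (the eigenvalues of $L_t$ on $\mathbb{S}^1$ vary continuously in $t$ and are roots of unity, hence cannot move), so $\sup_{t\in J}|\alpha_t|_{LL}<\infty$. Log-Lipschitz implies the Osgood condition \cite{osgood}, so (\ref{edo3}) is uniquely solvable; as $\alpha_t(a)=\alpha_t(b)=0$ the endpoints are equilibria, solutions from $[a,b]$ stay in $[a,b]$ and are defined for all $t$, and $h_t$ is a homeomorphism of $[a,b]$, jointly continuous in $(x,t)$, with inverse the backward flow. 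The conjugacy $h_t\circ f_0=f_t\circ h_t$ is the reverse argument of Part~1: both $t\mapsto h_t(f_0(x))$ and $t\mapsto f_t(h_t(x))$ solve $\dot y=\alpha_t(y)$ with $y(0)=f_0(x)$ — for the latter use (\ref{tce3}) to write $v_t(h_t(x))+Df_t(h_t(x))\alpha_t(h_t(x))=\alpha_t(f_t(h_t(x)))$ — so they agree by uniqueness. Finally (\ref{hol1})--(\ref{hol2}) are Chemin's estimate \cite{chemin} for the flow of a Log-Lipschitz field, applied to $h_t\circ h_{t_0}^{-1}$ and $h_{t_0}\circ h_t^{-1}$ (the flow from time $t_0$ to $t$, and its inverse), the exponent and prefactor being governed by $\sup_{t\in J}|\alpha_t|_{LL}$.

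For $E\Rightarrow C$: $E$ already gives the conjugacies forming a deformation, jointly continuous; it remains to upgrade the $t$-regularity of $h_t(x)$ from $C^1$ — which is exactly $\partial_t h_t(x)=\alpha_t(h_t(x))$ — to $C^{k-1+Lip}$. Since $t\mapsto f_t$ is $C^j$ and $t\mapsto v_t$ is $C^{j-1}$, and the solution of the twisted cohomological equation furnished by Theorem~\ref{tt} depends on the data differentiably (through its Birkhoff-sum representation and the estimates of \cite{um}), differentiating $\partial_t h_t(x)=\alpha_t(h_t(x))$ and $h_t\circ f_0=f_t\circ h_t$ repeatedly and bootstrapping yields the claimed $t$-regularity for each fixed $x$, exactly as for piecewise expanding unimodal maps in \cite{smooth}. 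This closes the cycle. The step I expect to be delicate is Case~(a) of $A\Rightarrow D$: the observation that a critical orbit avoiding $\hat C$ \emph{for every parameter} freezes the sign of $Df_t^k(c)$, making $t\mapsto f_t^k(c)$ monotone on small intervals, so that its bounded total variation collides with exponential expansion; the uniformity of the Log-Lipschitz constants in $D\Rightarrow E$ (via local constancy of $p(f_t)$) and the $t$-regularity bookkeeping in $E\Rightarrow C$ are the other points needing care, though routine once Theorems~\ref{infc} and \ref{tt} are available.
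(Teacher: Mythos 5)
Your cycle $B\Rightarrow A\Rightarrow D\Rightarrow E\Rightarrow C\Rightarrow B$ differs from the paper's layout (which treats $A\Leftrightarrow B\Leftrightarrow C$ by citing the piecewise-expanding-unimodal argument of Baladi--Smania, observes $E\Rightarrow A$ is obvious, and then proves $C\Rightarrow D\Rightarrow E$). The new content you supply, namely the total-variation argument for $J(f_t,c,v_t)=0$ in Case~(a) of $A\Rightarrow D$ --- constancy of $\mathrm{sign}\,Df_t^k(c)$ in $t$ forces monotonicity of $t\mapsto f_t^k(c)$ once $S_k$ is bounded away from zero, so $\int|\partial_t f_t^k(c)|\,dt\le b-a$ while the integrand grows like $\kappa^k$ --- is a genuinely self-contained alternative to the reference the paper makes to \cite{smooth}, and it is correct. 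The differentiation argument in Case~(b) matches the standard one. Your $D\Rightarrow E$ is essentially the paper's.

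There is, however, a gap in your claim that Lasota--Yorke stability is \emph{free} for a $C^j$ family, i.e.\ that the constants in (\ref{lys}) depend only on $\inf_{t\in K}\inf_I|Df_t|$ and $\sup_{t\in K}|f_t|_2$. Lemma~\ref{stable} shows that the constants also depend on the quantity $m_Q$, which is a lower bound on $|f^{k_Q}(x^+)-f^{k_Q}(y^-)|$ over adjacent points $x,y$ in the partition $C_{k_Q}(f)$ of the \emph{iterate} $f^{k_Q}$ (one needs $k_Q$ with $\lambda_Q^{k_Q}>4$ because $\inf|Df|$ need only exceed $1$, not $2$). Unlike $C$ itself, $C_{k_Q}(f_t)$ moves with $t$, and its branch intervals can in principle collapse; this is precisely the scenario the paper rules out in its $C\Rightarrow D$ step, where it uses the joint continuity of $(x,t)\mapsto h_t(x)$ to push forward a uniform lower bound on branch lengths via $h_t(C_k(0))=C_k(t)$. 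Since your route goes $A\Rightarrow D$ and avoids $h_t$, you would have to argue directly from the constancy of $R_{f_t}$ that no $t$-dependent preimage $f_t^{-j}(c_i)$ collides with another critical point (which $A$ does give, orbitwise), and then invoke compactness of $K$ and continuity of the finitely many functions $t\mapsto |f_t^{k_Q}(x_t^+)-f_t^{k_Q}(y_t^-)|$ to get $m_Q>0$. As written, this step is asserted rather than proved and, in the range $\inf|Df_t|\le 2$, the assertion as stated (constants depend only on $\lambda_Q$ and $N_Q$) is false. A smaller point: you argue $p(f_t)$ is locally constant from continuous motion of peripheral eigenvalues, but eigenvalues can leave $\mathbb{S}^1$; what you actually need, and what Lemma~\ref{stable} already provides once LY stability is in hand, is only $\sup_{t\in K}p(f_t)<\infty$. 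Finally, in $E\Rightarrow C$ your phrase ``bootstrapping the ODE'' does not by itself yield $C^{k-1+\mathrm{Lip}}$ regularity in $t$, since $\alpha_t$ is merely Log-Lipschitz in $x$; the argument in \cite{smooth} is via explicit formulas on the postcritical set and density, so your citation is appropriate but the sketch preceding it is misleading.
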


\begin{lemma}\label{pre}  For every   $f\in  \mathcal{B}^k_{exp}(C)$ the pre-periodic  points are dense in $[a,b]$. 
\end{lemma}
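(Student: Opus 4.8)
The plan is to reduce the statement to the following: for every nonempty open interval $U\subseteq[a,b]$ some forward image $f^k(U)$ contains a periodic point in its interior. Indeed, if $p\in f^k(U)$ is periodic, pick $x\in U$ with $f^k(x)=p$ and $x$ not an endpoint of any element of $\mathcal P^k$; then the orbit of $x$ is eventually periodic and $x\in U$, so $U$ meets the pre-periodic set. Two elementary observations are used throughout: (i) the meshes of the partitions $\mathcal P^n$ (into maximal intervals of monotonicity of $f^n$) tend to $0$, since for $J\in\mathcal P^n$ the map $f^n|_J$ is monotone with $|Df^n|\ge\Crr{cmin}^{-n}$, whence $|J|\le\Crr{cmin}^{\,n}|f^n(J)|\le\Crr{cmin}^{\,n}|I|$; and (ii) any interval of length $>2\max_i(c_{i+1}-c_i)$ contains one of the intervals $(c_i,c_{i+1})$ — otherwise it would contain at most one point $c_i$, forcing its length to be $\le 2\max_i(c_{i+1}-c_i)$.

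The core is the claim: \emph{every nonempty open interval $U$ contains a subinterval $V$ with $f^m|_V$ monotone and $f^m(V)\supseteq(c_i,c_{i+1})$ for some $i$ and some $m\ge1$.} Granting this, fix $U$, apply the claim to obtain $V_1\subseteq U$ with $f^{m_1}|_{V_1}$ monotone and $f^{m_1}(V_1)\supseteq a_1:=(c_{i_1},c_{i_1+1})$; shrinking $V_1$ we may assume $f^{m_1}(V_1)=a_1$. Apply the claim to the open interval $a_1$, pull the resulting subinterval back through the monotone branch $f^{m_1}|_{V_1}$, and iterate. This produces a nested sequence $V_1\supseteq V_2\supseteq\cdots$ of subintervals of $U$, intervals $a_1,a_2,\dots$ drawn from the finitely many $(c_i,c_{i+1})$, and iterates $m_1<m_2<\cdots$ with $f^{m_j}|_{V_j}$ monotone and $f^{m_j}(V_j)=a_j$. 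By pigeonhole $a_{j_0}=a_{j_1}$ for some $j_0<j_1$; putting $W:=f^{m_{j_0}}(V_{j_1})\subseteq a_{j_0}$ and $r:=m_{j_1}-m_{j_0}\ge1$, the map $f^r|_W=(f^{m_{j_1}}|_{V_{j_1}})\circ(f^{m_{j_0}}|_{V_{j_1}})^{-1}$ is monotone and $f^r(\overline W)=\overline{a_{j_0}}\supseteq\overline W$. The intermediate value argument for $x\mapsto f^r(x)-x$ on $\overline W$ then yields a fixed point $p$ of $f^r$, i.e.\ a periodic point of $f$; pulling $p$ back through $f^{m_{j_0}}|_{V_{j_1}}$ gives a pre-periodic point in $\overline{V_{j_1}}\subseteq\overline U$, and applying this to a slightly shrunk copy of $U$ gives one in $U$ itself.

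The main obstacle is proving the claim: one must show that the forward images of a tiny branch $J\in\mathcal P^N$ with $\overline J\subseteq U$ (which exists by (i)) cannot stay microscopically small forever — for then, by (ii) and expansion, some monotone image $f^m(V)$ with $V\subseteq J$ eventually has length $>2\max_i(c_{i+1}-c_i)$ and hence contains some $(c_i,c_{i+1})$. If $\Crr{cmin}^{-1}=\inf|Df|>2$ this is immediate: track the interval, passing to the larger of the two monotone pieces whenever an image straddles a point $c_i$ (losing a factor $2$) and applying $f$ (gaining a factor $\ge\Crr{cmin}^{-1}$) otherwise; since a straddling step is always followed by a non‑straddling one, over any two consecutive steps the length is multiplied by at least $\Crr{cmin}^{-1}/2>1$, so it grows without bound. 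For general $\Crr{cmin}^{-1}>1$ one must exclude the pathological scenario in which the images keep shrinking while straddling breakpoints — equivalently, one invokes that piecewise expanding maps have no wandering intervals, which rests on the bounded distortion estimate (\ref{ddis2}) of Lemma~\ref{bvpartition}. Alternatively one can bypass the claim entirely by an ergodic argument: $f$ has an absolutely continuous invariant probability measure with $BV$ density $\Phi_1(1)$ whose support is a finite union of intervals in which periodic points are dense; since $\tfrac1n\sum_{k<n}L^k(1_U)$ converges in $BV$ to the nonnegative density $\Phi_1(1_U)$ of total mass $|U|>0$ supported in that set, some $f^k(U)$ meets the interior of the support in an interval, and therefore contains a periodic point.
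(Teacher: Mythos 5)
Your alternative ergodic argument at the end of the proposal is essentially the paper's proof, just reorganised. The paper takes an absolutely continuous ergodic invariant probability $\mu$, cites Boyarsky--G\'ora to say the support of $\mu$ is a finite union of intervals up to a null set, cites Del Magno--Dias--Duarte--Gaiv\~ao for density of periodic points on that support, and then observes that Lebesgue-a.e.\ $x$ avoids $\bigcup_n f^{-n}(C)$ and eventually enters the interior of the support of one of the (finitely many) acips, so $x$ is approximable by pre-periodic points. You replace the pointwise step by the $BV$-convergence $\frac1n\sum_{k<n}L^k(1_U)\to\Phi_1(1_U)$, which forces some $f^k(U)$ to meet the interior of one of those supports; both variants rest on exactly the same two external citations and on the spectral picture already recorded in Section~\ref{layo}, so they are the same proof in substance.

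Your primary, combinatorial route, however, has a genuine gap which the remark you make does not close. The reduction to the covering claim, the pigeonhole step, and the intermediate-value argument producing a fixed point of $f^r$ are all correct, as is the proof of the claim under the extra hypothesis $\Crr{cmin}^{-1}>2$. But for $\Crr{cmin}^{-1}\in(1,2]$ you write that ``one invokes that piecewise expanding maps have no wandering intervals, which rests on the bounded distortion estimate.'' That appeal does not work: in the piecewise expanding setting ``no wandering intervals'' (in the sense of an interval all of whose iterates remain monotone) is vacuously true, since elements of $\mathcal P^n$ have length at most $\Crr{cmin}^n|I|\to 0$; it therefore says nothing about whether some monotone branch image $f^m(V)$, $V\subseteq U$, ever grows to cover a full lap $(c_i,c_{i+1})$. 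A priori the branch images could shrink indefinitely through repeated straddling when $\Crr{cmin}^{-1}\le 2$, and ruling that out is not a one-line consequence of \eqref{ddis2}; it requires either a genuine counting-plus-distortion argument or, more simply, the acip/transfer-operator machinery that your alternative route (and the paper) already use. So either restrict the combinatorial argument to the $\Crr{cmin}^{-1}>2$ case as an elementary illustration, or drop it and keep the ergodic argument as the actual proof.
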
 
\begin{proof}  Let   $\mu$  be an absolutely continuous ergodic probability of  $f$. Then the support of $\mu$  is a finite union of intervals up to a zero measure set (Boyarsky and  G\'{o}ra \cite{bg} ). Del Magno, Dias, Duarte and Gaiv\~ao  \cite{mddg} proved that periodic points are dense on the support of $\mu$.  Furthermore  the  union of the basins of these ergodic measures coincides with $[a,b]$. It is easy  to see that for almost every point $x \in [a,b]$ we have that for every $n \in \mathbb{N}^\star$ there is an open interval $I_n$ such that $f^n$ is a diffeomorphism  on $I_n$ and $x\in I_n$. So for almost every point $x\in [a,b]$ we have that $f^n(x)$ belong to the interior of the support of one of those ergodic measures. So we can approximate $f^n(x)$ by periodic point and consequently we can approximate $x$ by preperiodic points. 
\end{proof} 

\begin{lemma} \label{stable} Let $Q$ be  a compact subset of  $\mathcal{B}^\ell_{exp}(C)$, $j\geq 0$ and $\ell\geq 2$. Let $$C_k(f)=\cup_{i=0}^{k-1} f^{-i}C$$ Define 
$$\lambda_Q=\inf_{f\in Q} \inf_{x\in I} |Df(x)| > 1.$$
Let $k_Q\in \mathbb{N}^\star$ such that $\lambda^{k_Q} > 4$. Suppose 
$$m_Q= \inf_{\substack{ f\in Q\\x,y \in C_{k_Q}(f) \\ (x,y)\cap  C_{k_Q}(f) =\emptyset} } |f^k(x^+)-f^k(y^-)| > 0.$$
Then we can choose   constants in the Lasota-Yorke inequality  for $f\in Q$ that depends only on 
$\lambda_Q, m_Q$ and
$$N_Q=\sup_{f\in Q} |D^2f^k|_{L^\infty}+|Df^k|_{L^\infty}.$$
 Moreover
$$\sup_{f\in Q} p(f)< \infty.$$
\end{lemma}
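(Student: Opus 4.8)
The plan is to produce a Lasota--Yorke inequality on $BV$ that holds for every $f\in Q$ with a single choice of constants, and then to read off the uniform bound on $p(f)$ from the resulting uniform quasi-compactness of the transfer operators. First, everything in the statement is finite: $\lambda_Q>1$ by definition, $0<m_Q\le|I|$ by hypothesis, and since $Q$ is compact in $\mathcal B^{\ell}_{exp}(C)$ with $\ell\ge 2$ we have $\sup_{f\in Q}|f|_2<\infty$, whence by the chain rule $N_Q<\infty$ and $|Df|_{L^\infty}\le N_Q/\lambda_Q^{\,k_Q-1}$ for every $f\in Q$. The preliminary point is that the bounded--distortion constants $\Crr{dist},\Crr{distp}$ of Lemma~\ref{bvpartition}, applied to $F:=f^{k_Q}$ \emph{and all of its iterates $F^n$}, can be chosen depending only on $\lambda_Q$ and $N_Q$: telescoping $\ln|DF^n|=\sum_{i<n}\ln|DF\circ F^{i}|$ along a branch of $F^n$ and using $\inf|DF|\ge\lambda_Q^{k_Q}>4$ together with $|D^2F|_{L^\infty}\le N_Q$ gives, for $x,y$ in the same branch of $F^n$ and uniformly in $n$ and in $f\in Q$,
\[
\bigl|\ln|DF^n(x)|-\ln|DF^n(y)|\bigr|\ \le\ \frac{N_Q}{\lambda_Q^{k_Q}(\lambda_Q^{k_Q}-1)}\,\bigl|F^n(x)-F^n(y)\bigr|.
\]

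For the Lasota--Yorke inequality I estimate $|L_F^{m}\gamma|_{BV}$, where $L_F^m$ is the transfer operator of $F^m=f^{mk_Q}$. Decomposing $L_F^m\gamma=\sum_J L_F^m(\gamma 1_J)$ over the monotone $C^2$ branches $J$ of $F^m$, applying (\ref{bvest}) to each summand with the distortion constants above, and using $\inf_J|DF^m|\ge\lambda_Q^{mk_Q}$, the elementary inequalities $v(\gamma 1_J)\le v_J(\gamma)+2|\gamma|_{L^\infty(J)}$ and $|\gamma|_{L^\infty(J)}\le v_J(\gamma)+|J|^{-1}|\gamma|_{L^1(J)}$ (with $v_J$ the variation over $J$), the lower bound $|J|\ge m_Q/N_Q$ on the branches of $F=F^1$ (and, for $m>1$, a lower bound obtained by iterating this together with bounded distortion), and finally $\sum_J v_J(\gamma)\le v(\gamma)$, $\sum_J|\gamma|_{L^1(J)}=|\gamma|_{L^1}$, one arrives at
\[
|L_F^{m}\gamma|_{BV}\ \le\ \theta_m\,|\gamma|_{BV}+C_m\,|\gamma|_{L^1},\qquad
\theta_m=\frac{\Crr{dist}\,(3+\Crr{distp}|I|)}{\lambda_Q^{mk_Q}}.
\]
Since $\Crr{dist},\Crr{distp}$ do not depend on $m$, one may fix a power $m=m_\star$, depending only on $\lambda_Q$ and $N_Q$, for which $\theta_{m_\star}<1$; the constant $C_{m_\star}$ then depends only on $\lambda_Q,m_Q,N_Q$. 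Finally the finitely many operators $L_f^{\,i}$, $0\le i<m_\star k_Q$, are uniformly bounded on $BV$ (the branches of $f$ itself have length bounded below by a quantity depending only on $C$, and the other single--step data are controlled by $\sup_{f\in Q}|f|_2$), so interpolating through these powers yields (\ref{ly}) and hence (\ref{lyy}) for every $f\in Q$ with one choice of constants; write $\rho<1$ for the resulting contraction rate.

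Given the uniform inequality (\ref{lyy}) with $\rho<1$, each $L=L_f$ is quasi-compact on $BV$ with essential spectral radius $\le\rho$, and by the standard finite--rank estimate the rank $d(f)$ of the peripheral projection $\sum_{\lambda\in\Lambda_f}\Phi_\lambda$ is bounded by a constant $d_\star$ depending only on the constants in (\ref{lyy}), hence only on $\lambda_Q,m_Q,N_Q$. Now invoke the structure of the peripheral spectrum of the transfer operator of a piecewise expanding map: the space of invariant densities splits into $r=\dim\ker(L-1)$ ergodic components, the $i$-th one decomposing into a cycle of $q_i$ sets permuted by $f$ with $f^{q_i}$ mixing on each, and $\Lambda_f=\bigcup_{i=1}^{r}\mu_{q_i}$, where $\mu_q$ denotes the group of $q$-th roots of unity, each of which is a simple eigenvalue of $L$ relative to its component; consequently $\sum_{i=1}^{r}q_i=d(f)$ (cf.\ Boyarsky--G\'ora \cite{bg} and Del Magno--Dias--Duarte--Gaiv\~ao \cite{mddg}). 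Therefore every $q_i\le d(f)\le d_\star$, so $p(f)=\mathrm{lcm}(q_1,\dots,q_r)$ divides $\mathrm{lcm}(1,\dots,d_\star)$, and $\sup_{f\in Q}p(f)\le\mathrm{lcm}(1,\dots,d_\star)<\infty$.

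The step that carries the weight is the second one. Two points there need care: the uniformity (in $n$ and over $Q$) of the distortion constants of Lemma~\ref{bvpartition}---this is the telescoping estimate above, which sees $Q$ only through $\lambda_Q$ and, via $N_Q$, through $|D^2F|_{L^\infty}$---and the uniform lower bound on the lengths, equivalently the image lengths, of the branches of the iterate $F^{m_\star}$ that makes the contraction factor $<1$, which must be traced back to $m_Q$ and $N_Q$ via bounded distortion. The only place where the dependence on $\lambda_Q,m_Q,N_Q$ alone is not completely transparent is the uniform bound on the finitely many intermediate powers $L_f^{\,i}$, $i<m_\star k_Q$, which a priori uses $\sup_{f\in Q}|f|_2<\infty$; reducing this to $\lambda_Q,m_Q,N_Q$ (and the fixed set $C$) is the last bit of bookkeeping. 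Everything past Step~2---the quasi-compactness and the combinatorial passage from $d(f)$ to $p(f)$---is soft and standard.
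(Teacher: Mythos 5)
Your proposal is correct, and for the second assertion (the uniform bound on $p(f)$) it takes a genuinely different route from the paper. For the uniform Lasota--Yorke constants the two arguments are essentially the same plan: the paper simply cites Broise \cite{broise} and asserts that the constants there can be traced to $\lambda_Q,m_Q,N_Q$, while you re-derive the BV estimate for $L_F^m$ ($F=f^{k_Q}$) keeping the bookkeeping explicit --- more detail, not a different idea. The real divergence is the bound on $p(f)$. The paper argues by contradiction and compactness: if $p(f_n)\to\infty$ with $f_n\in Q$, pick peripheral eigenvalues $\lambda_n$ in an arbitrary arc $J\subset\mathbb S^1$ with eigenvectors $v_n$ that are $BV$-bounded by the uniform LY inequality, pass to a subsequential limit (Helly in $L^1$, $f_n\to f$ in $Q$) to obtain $L_f v=\lambda v$ with $\lambda\in J$, and conclude $\mathbb S^1\subset\sigma(L_f)$, contradicting quasi-compactness. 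You instead bound the rank $d(f)$ of the peripheral projection by a constant $d_\star$ depending only on the LY constants, then use the structure theorem $\Lambda_f=\bigcup_i\mu_{q_i}$ with $\sum_i q_i=d(f)$ to get $p(f)=\mathrm{lcm}(q_1,\dots,q_r)\mid\mathrm{lcm}(1,\dots,d_\star)$. Your version is more quantitative (it produces an explicit bound rather than mere finiteness) and sidesteps the limit passage $L_{f_n}v_n\to L_f v$, which the paper glosses over; the paper's version is softer and avoids committing to a rank bound. One caveat you should not brush aside: calling the rank bound ``standard'' is optimistic. What the LY inequality gives directly is that every $v$ in $\mathrm{ran}(\Pi)$ satisfies $|v|_{BV}\le 2C_0|v|_{L^1}$, where $\Pi=\sum_{\lambda\in\Lambda_f}\Phi_\lambda$; to turn this into $\dim\mathrm{ran}(\Pi)\le d_\star$ one needs an additional argument, e.g.\ a Kolmogorov-width (or $\varepsilon$-entropy) estimate for the compact embedding $BV\hookrightarrow L^1$ together with the fact that the $(d-1)$-width of a $d$-dimensional unit ball equals $1$, or alternatively a direct dynamical bound on $\sum_i q_i$ via disjoint supports of the cycle densities. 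Either works, but it is a step that deserves a proof rather than the adjective ``standard.''
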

\begin{proof} It is easy to check in the proof of Lasota-Yorke inequality in Broise \cite{broise} that  constants in the Lasota-Yorke inequality can the chosen  depending only on  $\lambda_Q, m_Q, N_Q$.
To show theupper bound on $p(f)$, suppose that there is $f_n \in Q$ such that $\lim_n p(f_n)=\infty$. That implies that $\Lambda_{f_n}$ has a cyclic group with $p(f_n)$ elements. So given a closed arc $J\subset \mathbb{S}^1$, we have that for $n$ large enough there is $\lambda_n \in J\cap \Lambda_{f_n}$, so there is $v_n\in BV$ with $|v_n|_{L^1(m)}=1$ such that $L_{f_n}v_n=\lambda_n v_n$.  Without loss of generality we can assume $\lim_n f_n=f\in Q$. The uniformity in the constants of  Lasota-Yorke inequality for $f_n$ easily implies that $\sup_n |v_n|_{BV}< \infty$, so we can assume that $\lim_n v_n =v$ in $BV$, with $|v|_{L^1(m)}=1$,  $\lim_n \lambda_n =\lambda \in J$,   and  consequently $L_fv=\lambda v$, so $\lambda\in \Lambda_f\cap J$. Since $J$ is arbitrary, it follows that  $\mathbb{S}^1$ is contained in the spectrum of $f$, which is not possible. So $\sup_{f\in Q} p(f)< \infty$. 
\end{proof} 

 The only  harmless distinction of the  following statement with Theorem 5.2.1 in Chemin \cite{chemin} is that $\alpha_t$ is not defined for every $t\in \mathbb{R}$.
 
\begin{proposition}[Theorem 5.2.1 in Chemin \cite{chemin}] \label{chemin}  Consider the ordinary differential equation 
\begin{equation} \label{ode4}  \begin{cases} \dot{h}_t(x)=\alpha_t(h_t(x)) \\ h_0(x)=x.   \end{cases} \end{equation} 
such that 
\begin{itemize} 
\item The function
$$(x,t)\mapsto \alpha_t(x)$$
is continuous,
\item There are  $\Cll{w1}$ and $\Cll{w2}$  such that  functions $\alpha_t\colon \mathbb{R}\rightarrow \mathbb{R}$  satisfy
$$|\alpha_t(x)-\alpha_t(y)|\leq \Crr{w1}|x-y|(1-\ln |x-y|)$$
for every $x,y$ such that $|x-y|< 1$, and 
$$|\alpha_t(x)|\leq  \Crr{w2}$$
for every $x \in \mathbb{R}$ and $t\in (c,d)$.  
\end{itemize} 
Then (\ref{ode4}) is uniquely integrable and it has a solution $h_t(x)$  defined for every $t\in (c,d)$ and moreover 
$$|h_t(x)-h_t(y)|\leq e^{1-exp(-\Crr{w1}t)} |x-y|^{exp(-\Crr{w1}t)}. $$
provided $|h_a(x)-h_a(y)|< 1$ for every $a\in [0,t]$ and $t\geq 0$.
\end{proposition}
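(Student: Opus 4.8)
The plan is to read Proposition~\ref{chemin} as a standard Osgood-type existence/uniqueness/regularity statement for an ordinary differential equation, the only subtlety being that the Log-Lipschitz modulus $\mu(s)=s(1-\ln s)$ controls $\alpha_t$ only at scales $s<1$. \emph{Existence.} First I would invoke Peano's theorem: since $(x,t)\mapsto\alpha_t(x)$ is continuous and $|\alpha_t(x)|\le\Crr{w2}$ uniformly, equation~(\ref{ode4}) has a solution near $t=0$ for every $x$, and the bound $|h_t(x)-x|\le\Crr{w2}\,|t|$ prevents escape to infinity, so the maximal solution is defined on all of $(c,d)$.

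\emph{The quantitative estimate.} Fix $x\neq y$ and set $\rho(t)=|h_t(x)-h_t(y)|$ on the interval where both solutions exist. On any subinterval where $\rho>0$ the function $\rho$ is $C^1$ and, choosing the sign appropriately, $\dot\rho(t)=\pm(\alpha_t(h_t(x))-\alpha_t(h_t(y)))$, so $|\dot\rho(t)|\le|\alpha_t(h_t(x))-\alpha_t(h_t(y))|$; while in addition $\rho(t)<1$, the Log-Lipschitz hypothesis yields $|\dot\rho(t)|\le\Crr{w1}\,\rho(t)\big(1-\ln\rho(t)\big)$. Setting $w(t)=1-\ln\rho(t)>1$ on $\{0<\rho<1\}$, this gives $\dot w(t)=-\dot\rho(t)/\rho(t)\ge-\Crr{w1}\,w(t)$, hence $t\mapsto e^{\Crr{w1}t}w(t)$ is non-decreasing. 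Under the standing hypothesis $|h_a(x)-h_a(y)|<1$ for all $a\in[0,t]$ the differential inequality holds throughout $[0,t]$, so $w(t)\ge w(0)e^{-\Crr{w1}t}$, i.e. $1-\ln\rho(t)\ge(1-\ln|x-y|)e^{-\Crr{w1}t}$, which rearranges to
$$\rho(t)\le\exp\!\Big(1-(1-\ln|x-y|)e^{-\Crr{w1}t}\Big)=e^{1-\exp(-\Crr{w1}t)}\,|x-y|^{\exp(-\Crr{w1}t)},$$
the asserted modulus of continuity.

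\emph{Uniqueness (unique integrability).} Suppose $h_t$ and $\tilde h_t$ both solve (\ref{ode4}) with the same initial value and $\rho(t):=|h_t(x)-\tilde h_t(x)|$ is positive at some $t_1>0$. Let $t_0=\sup\{t\in[0,t_1]:\rho(t)=0\}$; then $\rho(t_0)=0$, $\rho>0$ on $(t_0,t_1]$, and by continuity we may shrink $t_1$ so that $\rho<1$ on $(t_0,t_1]$. The same computation as above gives, for $t_0<s<t\le t_1$, the bound $w(s)\le e^{\Crr{w1}(t-s)}w(t)$ with $w=1-\ln\rho$; letting $s\to t_0^+$ the left-hand side blows up while the right-hand side stays finite, a contradiction. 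Hence $\rho\equiv0$. This is just the Osgood criterion $\int_{0^+}ds/\mu(s)=\int_1^\infty du/u=\infty$ made explicit.

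I expect the main obstacle to be purely a matter of bookkeeping rather than of substance: the inequality $|\dot\rho|\le\Crr{w1}\rho(1-\ln\rho)$ is only available at scales below $1$, so the global Hölder bound must be stated under the hypothesis $|h_a(x)-h_a(y)|<1$ (as it is), and in the uniqueness step one must first localise near the separation time $t_0$, where $\rho$ is automatically small, before the Log-Lipschitz estimate can be used. Beyond this the argument is the classical Peano/Osgood machinery, and the only deviation from Theorem~5.2.1 of Chemin~\cite{chemin} — that $\alpha_t$ is defined only for $t\in(c,d)$ rather than for all $t\in\mathbb{R}$ — is immaterial.
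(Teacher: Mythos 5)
Your proof is correct and coincides in substance with the paper's: the paper's own proof simply remarks that global existence follows from continuity and boundedness of $(x,t)\mapsto\alpha_t(x)$ and then refers the reader to the proof of Theorem~5.2.1 in Chemin, which is precisely the Osgood/Gronwall argument you write out in full. The one bookkeeping point worth making explicit is that for $x\neq y$ the separation $\rho(t)=|h_t(x)-h_t(y)|$ stays strictly positive on $[0,t]$, which you only justify implicitly; this follows from the uniqueness you prove in the next paragraph (two trajectories on $\mathbb{R}$ cannot touch without coinciding), so either establish uniqueness first or note that once $\rho$ reaches zero it stays zero and the bound is trivial thereafter.
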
 
\begin{proof} Note that there are solutions $h_t(x)$ defined for every $t\in (c,d)$ since $(x,t)\mapsto \alpha_t(x)$ is a continuous and bounded function.  Now apply the same methods of  the proof of Theorem 5.2.1 in Chemin \cite{chemin}.
\end{proof}

\begin{proof}[Proof of Theorem \ref{char}] Due Lemma \ref{pre}  we can use the same argument in the proof of Theorem 1 in  Baladi and S. \cite{smooth} to show that $A$, $B$ and $C$ are equivalent.  Note that $\it E\implies A$ is obvious. \\

\noindent  $\it C\implies D$. The proof  that $C$ implies  $J(f_t,x,\partial_sf_s|_{s=t})=0$ for every $i\leq n$, $c\in \hat{C}$  and $t \in (a,b)$ is also quite similar to \cite[Theorem 1]{smooth}, so we skip it.\\
 It remains to show that   $f_t$ is Lasota-Yorke stable. Indeed, note that  $C$ implies that  if $C_k(t)$ is the set of critical points of $f_t^k$ then $h_t(C_k(0))=C_k(t)$ and consequently  for every compact subset $Q\subset (a,b)$ we have 
$$ \inf_{\substack{ t\in Q\\x,y \in C_k(t) \\ (x,y)\cap  C_k(t) =\emptyset} } |f^k_t(x^+)-f^k_t(y^-)| > 0,$$
so by Lemma \ref{stable} we conclude that  $f_t$ is Lasota-Yorke stable. \\

\noindent  $\it D\implies E$.  By Theorem \ref{infc} there are log-Lipchitzian solutions $\alpha_t$ for  (\ref{tce3}) such that $\alpha_t(c)=0$ for every $c\in \hat{C}$ and $\sup_{t\in J} |\alpha_t|_{log-Lip} < \infty$ and  $\sup_{t\in J} |\alpha_t|_{L^\infty(m)} < \infty$ on any compact interval $J$. We can extend $\alpha_t$ to be zero outside $I$ in such way that $\alpha_t$ became  log-Lipchtizian  on $\mathbb{R}$ 
with a similar uniform bound on the log-Lipchitizian norm and the sup norm.  Note that 
$$(x,t)\mapsto \alpha_t(x)$$ 
is continuous.  By Osgood we have that (\ref{edo3}) has a unique solution $h_t(x)$ defined for every $t\in (a,b)$. Note that  $h_t$ is defined for every $t \in (a,b)$ since $\alpha_t(x)$ are uniformly bounded on compact intervals in $(a,b)$.  Note also that since $h_t(0)=0$ and $h_t(1)=1$ for every $t$ we conclude that $h_t(x)\in (0,1)$ for every $x\in (0,1)$ and 
$t\in (a,b)$. 

Let $x\in [0,1]$. It follows from  (\ref{tce3}) that $h_t(f_0(x))$ and $f_t(h_t(x_0))$ are both solutions of  $\dot{y}=\alpha_t(y)$ with initial condition $y(0)=f_0(x) \in [0,1]$ so the unique integrability implies that $h_t\circ f_0=f_t\circ h_t$ (the verification of this when  solutions cross critical points is a slightly more delicate,  see Baladi and S. \cite{smooth} for details).  

Let $J\subset (a,b)$ be a compact interval and
$$\Crr{w1}=\sup_{t\in J} |\alpha_t|_{LL}.$$
By Proposition \ref{chemin} we have that (\ref{edo3})   defines a flow that satisfies 
$$|h_t(x)-h_t(y)|\leq e^{1-exp(-\Crr{w1}t)} |x-y|^{exp(-\Crr{w1}t)}$$
for $t\geq 0$.  This proves (\ref{hol1}) for $t_0=0$ and $t\geq 0$. For a general $t_0$, note that it is enough to show (\ref{hol1}) and (\ref{hol2}) when $t\geq t_0$.   To show (\ref{hol1}) for $t\geq t_0$  consider the smooth family $\tilde{f}_t=f_{t_0+t}$, apply the same argument to $\tilde{f}_t$ and note that $\tilde{h}_t \circ \tilde{f}_0 = \tilde{f}_t \circ \tilde{h}_t$, where $\tilde{h}_t =h_{t_0+t}\circ h_{t_0}^{-1}$.  Then we obtain
$$|h_{t_0+t}\circ h_{t_0}^{-1}(x)-h_{t_0+t}\circ h_{t_0}^{-1}(y)|\leq e^{1-exp(-\Crr{w1}t)}|x-y|^{exp(-\Crr{w1}t)}$$
for every $t\geq 0$ such that $t_0, t_0+t\in J$.

To show (\ref{hol1}), fix $t\geq t_0$.  One  can apply a similar argument to the family  $\tilde{f}_s=f_{t-s}$ since in this case  $\tilde{h}_s \circ \tilde{f}_0 = \tilde{f}_s\circ \tilde{h}_s$, where $\tilde{h}_s =h_{t-s} \circ h_{t}^{-1}$, and we obtain 
$$|h_{t-s} \circ h_{t}^{-1}(x)-h_{t-s} \circ h_{t}^{-1}(y)|\leq e^{1-exp(-\Crr{w1}s)}|x-y|^{exp(-\Crr{w1}s)}$$
for $s\geq 0$ such that $t, t-s\in J$.  Choosing  $s=(t-t_0)$  we obtain 
$$|h_{t_0} \circ h_{t}^{-1}(x)-h_{t_0} \circ h_{t}^{-1}(y)|\leq e^{1-exp(-\Crr{w1}(t-t_0))}|x-y|^{exp(-\Crr{w1}(t-t_0))}.$$
which implies (\ref{hol2}) for $t\geq t_0$, $t,t_0\in J$.
\end{proof}

 \section{Flexibility of multipliers} 
 
 \begin{proposition}\label{flex}   Let $f\in  \mathcal{B}^{2}_{exp}(C)$. Consider the set indexed family $\mathcal{F}$  containing all functionals of the following types
 \begin{itemize}
 \item[A.] Functionals of the form $\Psi_{c,0}(v)=J(f,c,v)$, with $c \in \hat{C}.$
 \item[B.] Functionals of the form 
 $$\Psi_{\mathcal{O}(p),1}(v)= \sum_{j=0}^{m-1}\frac{Dv(f^j(p))+D^2f(f^j(p))\cdot\alpha_{v,j,p}}{Df(f^j(p))}$$
 where $p\in \hat{I}$ is a periodic point with period $m$ and
 $$\alpha_{v,p,j}= -\sum_{k=j}^\infty \frac{v(f^k(f^j(p)))}{Df^{k+1}(f^j(p))}.$$
 \end{itemize}
 Then $\mathcal{F}$ is a linear independent indexed family  in $\mathcal{B}^{\ell}(C)^\star$, for every $\ell\geq 1$.
 \end{proposition}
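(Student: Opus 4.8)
The plan is to establish the equivalent dual statement directly: if a finite linear combination
$$\Phi \;=\; \sum_j a_j\,\Psi_{c_j,0}\;+\;\sum_i b_i\,\Psi_{\mathcal{O}(p_i),1}$$
(with finitely many type-A indices $c_j\in\hat C$ and finitely many periodic orbits $\mathcal{O}(p_i)$) vanishes on all of $\mathcal{B}^\ell(C)$, then every $a_j$ and every $b_i$ is $0$. I would exploit the following structure. Each $\Psi_{c_j,0}(v)=\sum_i v(f^i(c_j))/Df^i(f(c_j))$ is integration of $v$ against a signed atomic measure of finite total variation (the series converges because $\inf|Df|>1$), supported on the truncated forward orbit of $c_j$, which by the very definition of $J$ meets $\hat C$ only at its initial point $c_j$. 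By contrast $\Psi_{\mathcal{O}(p),1}(v)$ contains the genuinely first-order piece $\sum_l Dv(f^l p)/Df(f^l p)$, with the nonzero coefficient $1/Df(f^l p)$ at each of the finitely many points of $\mathcal{O}(p)$, the rest of $\Psi_{\mathcal{O}(p),1}(v)$ being once more a finite combination of values of $v$ on $\mathcal{O}(p)$ (since $\alpha_{v,p,l}$ is a convergent geometric series in the values of $v$ along the periodic orbit).

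First I would kill the $b_i$'s. Fix $i_0$ and a point $q$ of $\mathcal{O}(p_{i_0})$, regarded in $\hat I$. Pick a smooth $\phi_\varepsilon\in\mathcal{B}^\ell(C)$ supported in an $\varepsilon$-neighbourhood of $q$ (a one-sided one if $q$ lies over a breakpoint), with $\phi_\varepsilon(q)=0$, $D\phi_\varepsilon(q)=1$ and $|\phi_\varepsilon|_{L^\infty}\to 0$ as $\varepsilon\to 0$. For small $\varepsilon$ the function $\phi_\varepsilon$ and its derivative vanish on every $\mathcal{O}(p_i)$ with $i\neq i_0$ and on $\mathcal{O}(p_{i_0})\setminus\{q\}$, and $\phi_\varepsilon(q)=0$ annihilates the $\alpha$-term of $\Psi_{\mathcal{O}(p_{i_0}),1}$ as well; hence $\Psi_{\mathcal{O}(p_i),1}(\phi_\varepsilon)=0$ for $i\neq i_0$ and $\Psi_{\mathcal{O}(p_{i_0}),1}(\phi_\varepsilon)=D\phi_\varepsilon(q)/Df(q)=1/Df(q)$. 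Because $\Psi_{c_j,0}$ is integration against a finite-total-variation measure, $\Psi_{c_j,0}(\phi_\varepsilon)=O(|\phi_\varepsilon|_{L^\infty})\to 0$. Therefore $0=\Phi(\phi_\varepsilon)=b_{i_0}/Df(q)+o(1)$, so $b_{i_0}=0$; this holds for every $i_0$.

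It remains to kill the $a_j$'s in $\Phi=\sum_j a_j\Psi_{c_j,0}$. Write $\pi(c_a^{\pm})=c_a$ for the projection to $[a,b]$. Fix $j_0$, say $c_{j_0}=c_a^{\sigma}$, and take a smooth $\psi_\varepsilon\in\mathcal{B}^\ell(C)$ supported on the $\sigma$-side of $c_a$ within distance $\varepsilon$, with $\psi_\varepsilon(c_a^{\sigma})=1$ and $|\psi_\varepsilon|_{L^\infty}\leq C$ uniformly in $\varepsilon$. The $i=0$ term of $\Psi_{c_{j_0},0}(\psi_\varepsilon)$ is $\psi_\varepsilon(c_{j_0})=1$, while for $c_j\neq c_{j_0}$ (as elements of $\hat C$) the $i=0$ term is $0$ once $\varepsilon$ is small, since $\psi_\varepsilon$ is supported on one side of the single breakpoint $c_a$. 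For every $j$ the $i\geq 1$ terms contribute at most $C\sum_{i\geq 1}\bigl(1/Df^i(f(c_j))\bigr)\mathbf{1}\!\bigl[\pi(f^i(c_j))\in(c_a,c_a+\varepsilon)\bigr]$ in absolute value; since the defining sum of $J(f,c_j,\cdot)$ never meets $\hat C$ after time $0$ we have $\pi(f^i(c_j))\neq c_a$ for those $i$, so each indicator tends to $0$ pointwise, and dominated convergence (the weights $1/Df^i(f(c_j))$ are summable) makes this tail $o(1)$. Hence $\Psi_{c_{j_0},0}(\psi_\varepsilon)=1+o(1)$ and $\Psi_{c_j,0}(\psi_\varepsilon)=o(1)$ for $j\neq j_0$, so $0=\Phi(\psi_\varepsilon)=a_{j_0}+o(1)$ forces $a_{j_0}=0$. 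Thus the finite family, and therefore all of $\mathcal{F}$, is linearly independent in $\mathcal{B}^\ell(C)^\star$ for every $\ell\geq1$.

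The delicate point is the last estimate: one must be sure the ``tail'' contributions of the Birkhoff-type sums $J(f,c_j,\cdot)$ to $\Phi(\psi_\varepsilon)$ disappear in the limit even when the forward orbit of some $c_j$ accumulates at the breakpoint $c_a$. This is exactly what the two ingredients provide — the truncated orbits occurring in the definition of $J$ never return to $\hat C$, so the offending iterates are never exactly at $c_a$, and the expansion $\inf|Df|>1$ makes the weights $1/Df^i$ geometrically summable, so dominated convergence applies. The only other thing to watch throughout is the one-sided ($\hat I$) structure at breakpoints, which is why the bumps $\phi_\varepsilon$ and $\psi_\varepsilon$ must be placed on the appropriate side.
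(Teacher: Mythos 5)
Your proof is correct, and it takes a genuinely different route from the paper's, even though the two are mathematically equivalent. The paper fixes a finite sub-collection of the functionals and proves that the joint evaluation map
$$T(v)=\bigl(\Psi_{\mathcal{O}(p_1),1}(v),\dots,\Psi_{\mathcal{O}(p_k),1}(v),J(f,c_1,v),\dots\bigr)$$
is onto, by assembling a \emph{single} test function $v_\delta$ out of calibrated bumps placed simultaneously at every point of the finite orbits $\mathcal{O}(p_i)$ and at the first $N_c$ iterates of each $c\in\hat C$, with the $J$-tails controlled by choosing $N$ so that $\sum_{i\geq N}(2n-2)/|Df^i|<\epsilon/2$. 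You instead verify the dual formulation directly (a vanishing finite linear combination has zero coefficients), killing the coefficients one at a time with a single one-sided bump per coefficient, and you control the $J$-tails by dominated convergence: the weights $1/Df^i$ are geometrically summable, and for each fixed $i\geq 1$ appearing in the truncated sum one has $\pi(f^i(c_j))\neq c_a$ (the defining truncation of $J$ forbids returns to $\hat C$), so each indicator of the shrinking support vanishes pointwise. Both proofs hinge on exactly the same two mechanisms — locality at finitely many orbit points and geometric decay of the $J$-weights — but your coefficient-by-coefficient construction is leaner, avoiding the bookkeeping needed to coordinate bumps at all points of $\Omega$ at once, at the modest cost of a dominated-convergence step; the paper's version is more constructive in that it realizes the image of $T$ as all of $\mathbb{R}^{k+q}$, which is then used again in the proof of the manifold theorem. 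One small point you leave implicit (but harmless): if the chosen periodic point $q$ projects to an $x\in I\setminus C$ for which both $x^+$ and $x^-$ lie on the orbit $\mathcal{O}(p_{i_0})$ in $\hat I$ (period doubling in $\hat I$ caused by a negative multiplier), your symmetric bump hits both and yields $1/Df(x^+)+1/Df(x^-)$ rather than $1/Df(q)$; this is still nonzero, so $b_{i_0}=0$ follows all the same.
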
 
 \begin{proof} It is enough to show the following statement. Given a finite sequence of periodic points $p_1,\dots, p_k$ in {\it distinct} orbits, and $c_1, \dots, c_{2n-2}$ be an  enumeration of the  elements of $\hat{C}$,  the linear transformation
 $$T\colon \mathcal{B}^{\ell}(C) \rightarrow \mathbb{R}^{k+ 2n-2}$$
 given by
 $$T(v)=(\Psi_{\mathcal{O}(p_1),1}(v), \dots, \Psi_{\mathcal{O}(p_k),1}(v), J(f,c_1,v), \dots, J(f,c_1,v))$$
 has $\mathbb{R}^{k+ q}$ as its image. Of course it is enough to show that $T( \mathcal{B}^{\ell}(C))$ is dense in $\mathbb{R}^{k+ q}$. Let $m_i$ be the period of $p_i$. 
 
 Let $u_0$ and $u_1$ be  $C^\infty$ functions defined in  $\mathbb{R}$  with support in $[-1,1]$, such that $u_0(0)=Du_1(0)=1$, $Du_0(0)=u_1(0)=0$ and $|u_i|_\infty\leq 1$ for $i=0,1$.   For every $x_0\in \hat{I}\setminus \hat{C}$ and $\delta > 0$ define 
 $$u(x_0,a,b,\delta,x)= au_0((x-x_0)/\delta)+  b\delta u_1((x-x_0)/\delta).$$
 Note that the support of this function is in $[-\delta,\delta]$, and
 \begin{align*} &u(x_0,a,b,\delta,x_0)=a, \\
                       &D_x u(x_0,a,b,\delta,x_0)=b.
 \end{align*} 
 If $x_0=c^+ \ (c^-)$, with $c\in C$, define $u(x_0,a,b,\delta,x)$ as before for $x \geq 0 \ (x\leq 0)$, and $0$, otherwise. 
 
 \noindent Given $\epsilon> 0$, choose $N$ such that 
\begin{equation}\label{ta} \sum_{i=N}^\infty  \frac{2n-2}{|Df^{i}(x)|} < \epsilon/2\end{equation} 
for every $x\in \hat{I}$.   Let
$$(e_1,\dots, e_k, w_1,\dots, w_{2n-2})\in  \mathbb{R}^{k+ 2n-2}.$$
For $c\in \hat{C}$ define
$$N_c =\min\  \{ N\}\cup \{k\colon f^k(c)\in \hat{C}, \ k\geq 1\}.$$
Consider the set 
$$\Omega= \{f^i(c), \ c\in \hat{C}, \ i\leq N_c   \}\bigcup \cup_{i} \mathcal{O}(p_i)$$
and
$$\delta_0=\min_{\substack{x,y\in \Omega\\ x\neq y}} |x-y|.$$
We are going to define $a_{x_0}, b_{x_0}$ for every $x_0\in \Omega$.
Define $a_{c_i}=w_i$ for every $i\leq 2n-2$.  And for all
$$x_0\in \Omega\setminus \hat{C}$$
define $a_{x_0}=0$. 
For each $x_0\in \mathcal{O}(p_i)$  define 
$$\beta_{x_0}= -\sum_{k=0}^\infty \frac{a_{f^{k}(x_0)}}{Df^{k+1}(x_0)}.$$

\noindent If $x_0\in \mathcal{O}(p_i)$ we define $b_{x_0}=-D^2f(x_0)\beta_{x_0}$ for $x_0\neq p_i$, and  $b_{p_i}=e_iDf(p_i) -D^2f(p_i)\beta_{p_i}.$ For every 
$$x_0\in \Omega\setminus \cup_{i} \mathcal{O}(p_i)$$
define $b_{x_0}=0$.

Given $\delta \in (0,\delta_0/2)$ define
$$v_\delta(x)=\sum_{x_0\in \Omega} u(x_0,a_{x_0},b_{x_0},\delta,x).$$
One can see that $v_\delta(x)=a_{x}$ and $Dv_\delta(x)=b_x$  for every $x\in \Omega$. In particular
$$\alpha_{v_\delta,p_i,j}=\beta_{f^j(p_i)}$$
for every $x\in \mathcal{O}(p_i)$, $i\leq k$, and consequently 
$$\Psi_{\mathcal{O}(p_i),1}(v)= e_i.$$
Since 
$$\lim_{\delta\rightarrow 0^+} |v_\delta|_\infty=1$$
we have that (\ref{ta}) implies that  for $\delta$ small enough
$$ |J(f,c_i,v_\delta)-w_i |< \epsilon$$
for every $i\leq 2n-2$. This shows that $T( \mathcal{B}^{\ell}(C))$ is dense in $\mathbb{R}^{k+ q}$. 
 \end{proof} 
 
  \begin{remark} Suppose that $f_t$ is a smooth family in the topological class of $f_0$ such that $f_t=f_0+tv + o(t)$ and   $p$  is a $f_0$-periodic point. Let $p_t$ be the  smooth continuation of $p$, that is, $t\mapsto p_t$ is smooth and $f_t^m(p_t)=p_t$.  Then
 $$\partial_t f_t^j(p_t)\big|_{t=0}= \alpha_{v,p,j}$$
 and 
 $$\partial_t \log |Df_t^m(p_t)|\big|_{t=0}= \Psi_{\mathcal{O}(p),1}(v).$$
 So Proposition \ref{flex} (and Theorem \ref{top})  tell us that {\it in the topological class} of $f_0$ we can perturb the multipliers of periodic points in the independent way.
 \end{remark} 

\section{Topological classes are Banach manifolds} 

\begin{lemma}\label{discrete}  Let $f\in  \mathcal{B}^1_{exp}(C)$ and $\{w_c\}_{c\in \hat{C}}\subset  \mathcal{B}^k(C)$ be such that $J(f_,w_c,c)=1$ and $J(f_,d,w_c)=0$ for $d\neq c$. Then for every $v\in \ell^\infty(\mathcal{O}^f)$ there is an unique vector $(t_c)_{c\in \hat{C}} \in  \mathbb{R}^{\hat{C}}$ such that 
\begin{equation} \label{sol} J(f,c,(v(a)+\sum_{c\in \hat{C}} t_c w_c(a))_{a\in \mathcal{O}})=0\end{equation} 
for every $c\in \hat{C}$ and there is an unique  $(\alpha(a))_{a\in \mathcal{O}}\in \ell^\infty(\mathcal{O})$ such that  $\alpha(c)=0$ for every $c\in \hat{C}$ and 
\begin{equation}\label{soll}  v(a)+\sum_{c\in \hat{C}} t_c w_c(a) =\alpha(f(a))- Df(a)\alpha(a).\end{equation} 
\end{lemma}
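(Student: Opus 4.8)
The plan is to treat the two assertions of the lemma separately.

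\textbf{The coefficients $(t_c)$.} This part is pure linear algebra. Since $f$ is expanding, for each fixed $c\in\hat C$ the series defining $J(f,c,\cdot)$ converges absolutely, so $J(f,c,\cdot)$ is a well-defined bounded linear functional on $\ell^\infty(\mathcal O^f)$ (and on $\mathcal B^k(C)$). As $\hat C$ is finite, for any $(t_{c'})_{c'\in\hat C}$ the combination $\tilde v:=v+\sum_{c'\in\hat C}t_{c'}w_{c'}$ again lies in $\ell^\infty(\mathcal O^f)$, and by linearity together with the duality hypothesis ($J(f,d,w_c)=1$ if $d=c$, and $0$ otherwise) one has $J(f,d,\tilde v)=J(f,d,v)+t_d$ for every $d\in\hat C$. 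Hence (\ref{sol}) holds for every $c\in\hat C$ if and only if $t_c=-J(f,c,v)$ for every $c$, which proves existence and uniqueness simultaneously. From now on I fix these $t_c$; then $J(f,c,\tilde v)=0$ for all $c\in\hat C$.

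\textbf{Constructing $\alpha$.} I would define $\alpha$ as in the proof of Theorem \ref{infc} ($A\implies B$), adapted to the discrete set $\mathcal O^f$: for $a\in\mathcal O^f$ let $N(a)\in\{1,2,\dots\}\cup\{\infty\}$ be the first $k\ge 1$ with $f^k(a)\in\hat C$ (or $\infty$ if there is none), and set
$$\alpha(a)=-\sum_{j=0}^{N(a)-1}\frac{\tilde v(f^j(a))}{Df^{j+1}(a)}.$$
Because $\inf_I|Df|>1$ the series converges and $|\alpha(a)|\le|\tilde v|_{L^\infty}\sum_{j\ge 1}(\inf_I|Df|)^{-j}$ uniformly in $a$, so $\alpha\in\ell^\infty(\mathcal O^f)$. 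Using the chain rule $Df^{i+1}(c)=Df(c)\,Df^{i}(f(c))$ one checks that $J(f,c,\tilde v)=-Df(c)\,\alpha(c)$ for $c\in\hat C$, so the choice of $(t_c)$ in the first part forces $\alpha(c)=0$ for every $c\in\hat C$, as the statement demands.

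\textbf{Cohomological equation and uniqueness.} It then remains to verify $\tilde v(a)=\alpha(f(a))-Df(a)\alpha(a)$ for every $a\in\mathcal O^f$, and the uniqueness of $\alpha$. If $N(a)\ge 2$ or $N(a)=\infty$, then $N(f(a))=N(a)-1$, and inserting $Df^{j+1}(a)=Df(a)\,Df^{j}(f(a))$ in the defining series and reindexing shows $\alpha(f(a))=Df(a)\alpha(a)+\tilde v(a)$, i.e. the desired identity; if $N(a)=1$, then $f(a)\in\hat C$, so $\alpha(f(a))=0$ by the previous step while $\alpha(a)=-\tilde v(a)/Df(a)$, which again yields the identity. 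This is exactly the telescoping already carried out in the proof of Theorem \ref{infc}. For uniqueness, any $\beta\in\ell^\infty(\mathcal O^f)$ that vanishes on $\hat C$ and satisfies $\beta\circ f-Df\cdot\beta=0$ obeys $\beta(a)=\beta(f^k(a))/Df^k(a)$ for all $k$; taking $k=N(a)$ when finite, or letting $k\to\infty$ and using boundedness together with $\inf_I|Df|>1$, forces $\beta\equiv 0$, so the solution of the second part is unique.

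\textbf{Expected obstacle.} There is no genuine difficulty here; the only points that demand care are the bookkeeping in the telescoping above, the convention $Df^0\equiv 1$ buried in the definition of $J$, and the handling of the one-sided points $c^{\pm}\in\hat C$. The sole place where the hypothesis on $(t_c)$ is actually used is the case $N(a)=1$ — equivalently, the cohomological equation at points of $\hat C$ with first return time $1$ — which is precisely where $J(f,c,\tilde v)=0$, i.e. $\alpha(c)=0$, is invoked.
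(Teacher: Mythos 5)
Your proof is correct and follows essentially the same route as the paper: set $t_c=-J(f,c,v)$ by the duality relations, define $\alpha$ by the (truncated or infinite) telescoping series along forward orbits, note that $J(f,c,\tilde v)=0$ forces $\alpha|_{\hat C}=0$, and obtain uniqueness from expansion and boundedness. Incidentally, your denominator $Df^{j+1}(a)$ is the right one — the paper's own proof writes $Df^i(f(a))$, which differs by a factor $Df(a)$ and appears to be a typo, since only with $Df^{j+1}(a)$ does the telescoping reproduce~(\ref{soll}).
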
 
\begin{proof} The existence of the set $\{w_c\}_{c\in \hat{C}}$ follows from Proposition \ref{flex}.  It is easy to see that the unique solution of (\ref{sol}) is $t_c=-J(f,v,c)$. Let 
$$w = v+ \sum_{c\in \hat{C}} t_c w_c.$$
For every $a\in \mathcal{O}^f$ define $\alpha(c)=0$ for $c\in \hat{C}$, 
$$\alpha(a) = -\sum_{i=0}^\infty \frac{w(f^i(a))}{Df^i(f(a))} $$
if $f^i(a)\not \in \hat{C}$ for every $i\geq 0$, and 
$$\alpha(a) = -\sum_{i=0}^{N-1} \frac{w(f^i(a))}{Df^i(f(a))}$$
if $f^N(a)\in \hat{C}$ and $f^i(a)\not\in \hat{C}$ for every $i< N$.  Then $\alpha$ satisfies (\ref{soll}). If $\tilde{\alpha}$ is also a solution of (\ref{soll}) then $\beta =\alpha - \tilde{\alpha}$ satisfies $\beta(c)=0$ for every $c\in \hat{C}$ and $Df(b)\alpha(b)=\alpha(f(b))$ for every $b\not\in \hat{C}$. It easily  follows that $\beta=0$, so $\tilde{\alpha}=\alpha$. 
\end{proof}

\begin{theorem}[Topological classes are Banach manifolds with finite codimension]\label{top} Let  $f\in  \mathcal{B}^k_{exp}(C)$, with $k\geq 3$, be locally Lasota-Yorke stable. %There is an open set $U \in  \mathcal{B}^k_{exp}(C)$ such that $f_0\in U$ and $\mathcal{T}(f)\cap U $ is a $C^{\tilde{r}}$ Banach manifold modelled on the horizontal direction $E^h(f)$, where $\tilde{r}=[k/2],$ and codimension  $2n-2$.
Then the topological class  $\mathcal{T}(f)\subset \mathcal{B}^k_{exp}(C)$ is a $C^{\tilde{r}}$ Banach manifold modelled on the horizontal direction $E^h(f)$, where $\tilde{r}=[k/2],$ and with codimension  $2n-2$.
\end{theorem}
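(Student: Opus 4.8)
The plan is to realize $\mathcal{T}(f)$ locally as the zero set of a smooth submersion onto a finite-dimensional space, using the functionals from Proposition \ref{flex} and Theorem \ref{infc} to identify the tangent space, and then invoke a version of the implicit function theorem adapted to the loss of derivatives in the cohomological equation.

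\textbf{Step 1: The candidate tangent space and horizontal direction.} First I would define, for $f$ as in the statement, the "horizontal direction" $E^h(f)\subset \mathcal{B}^k(C)$ to be the common kernel of the $2n-2$ functionals $v\mapsto J(f,c,v)$, $c\in\hat C$. By Proposition \ref{flex} these functionals are linearly independent in $\mathcal{B}^k(C)^\star$ (indeed they are continuous, being given by the absolutely convergent series defining $J$ together with the bounds from expansion), so $E^h(f)$ is a closed subspace of codimension $2n-2$, and $\mathcal{B}^k(C)=E^h(f)\oplus V$ for a $(2n-2)$-dimensional complement $V$ spanned, say, by functions $w_c$ as in Lemma \ref{discrete}. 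By Theorem \ref{infc}, $v\in E^h(f)$ is exactly the condition that there is a (Log-Lipschitz) continuous $\alpha$ with $v=\alpha\circ f-Df\cdot\alpha$ and $\alpha(c)=0$ on $C$; and by the heuristic of Part I (Assumptions A and B, which hold here by Theorem \ref{char}), this $\alpha$ integrates to a conjugacy. So $E^h(f)$ is the morally correct tangent space to $\mathcal{T}(f)$ at $f$.

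\textbf{Step 2: A local chart via the conjugacy flow.} The core construction: given $g\in\mathcal{B}^k_{exp}(C)$ near $f$, I want to split the "deformation part" from the "transverse part". Using local Lasota-Yorke stability, the set $\Lambda_g$, the period $p(g)$, and all the Lasota-Yorke constants are uniform in a neighborhood (Lemma \ref{stable}). For $v\in E^h(f)$ small, I would produce the time-one conjugacy $h_v$ generated by the Log-Lipschitz vector field $\alpha_v$ solving the twisted cohomological equation for the straight-line family $f+sv$, as in Theorem \ref{char}(D$\Rightarrow$E) — but more usefully, I would run this the other way: given $g$ near $f$, decompose $g-f$ along $E^h(f)\oplus V$, and try to show that the $V$-component, together with finitely many critical-relation-type coordinates, gives a local submersion whose fibers are the deformations. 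Concretely, define $\Theta(g)=\big(J(g,c_1,\cdot)$-type critical displacements$\big)_{c\in\hat C}\in\mathbb{R}^{2n-2}$ measuring the failure of $g$'s critical orbits to match $f$'s critical relations; then $\mathcal{T}(f)$ near $f$ is $\Theta^{-1}(0)$ by Theorem \ref{char}(A$\Leftrightarrow$B). One must check $\Theta$ is $C^{\tilde r}$ with $\tilde r=[k/2]$ and that $D\Theta_f$ restricted to $V$ is an isomorphism onto $\mathbb{R}^{2n-2}$ (this derivative computation is exactly Proposition \ref{flex} type-A independence), so that $\Theta$ is a submersion along $V$ and $\ker D\Theta_f=E^h(f)$.

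\textbf{Step 3: Regularity and the loss of derivatives.} The main obstacle — and the reason for $\tilde r=[k/2]$ rather than $k$ — is that $\Theta$ (equivalently, the dependence $g\mapsto$ its critical orbit data, or the conjugacy $h_t$) involves the infinitesimal deformation $\alpha$, which by Theorem \ref{tt}/\ref{infc} is roughly a primitive of a Birkhoff sum: it gains essentially no derivatives from $v$, and worse, the representation $\alpha=g+\int 1_{[c_0,x]}\sum\phi\circ f^{np+i}$ with $\phi=(Dv+D^2f\,\alpha)/Df$ shows that the second-order term in a Taylor expansion of the chart costs another derivative of $f$. Iterating, each further derivative of the chart map in the $\mathcal{B}^k$-direction costs two derivatives of $f$, giving $C^{[k/2]}$ smoothness. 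I would make this precise by setting up a scale of Banach spaces and estimating, via the spectral decomposition (\ref{ii}) and the Lasota-Yorke bounds, the $j$-th derivative of $(g,v)\mapsto \alpha_{g,v}$ and hence of $\Theta$; the uniformity from Lemma \ref{stable} is what lets these estimates hold on a whole neighborhood. Once $\Theta\in C^{[k/2]}$ is a submersion at $f$ with kernel $E^h(f)$, the standard implicit function theorem on Banach spaces gives that $\mathcal{T}(f)=\Theta^{-1}(0)$ is a $C^{[k/2]}$ Banach submanifold modelled on $E^h(f)$ of codimension $2n-2$, completing the proof. The delicate points to be careful about are: that $\Theta^{-1}(0)$ really equals the topological class (not just the set with matching critical relations) — this is Theorem \ref{char}, using density of preperiodic points (Lemma \ref{pre}) — and that the construction of $\alpha_{g,v}$ and its estimates genuinely localize, i.e. do not secretly depend on global data of $f$ that could degenerate.
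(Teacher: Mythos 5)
Your overall framing — identify the horizontal space $E^h(f)=\bigcap_{c\in\hat C}\ker J(f,c,\cdot)$, split $\mathcal{B}^k(C)=E^h(f)\oplus V$, and run an implicit function theorem argument — is in the right spirit, but the proposal has a genuine gap at Step 2, and it concerns precisely the technical device that makes the paper's proof work. You propose a map $\Theta\colon U\to\mathbb{R}^{2n-2}$ measuring ``critical displacements,'' with $\Theta^{-1}(0)=\mathcal{T}(f)\cap U$. Such a finite-dimensional defining equation does not exist in general. If some critical point $c\in\hat C$ has an infinite forward orbit never returning to $\hat C$, then the corresponding critical relation in $R_f$ is vacuous, and for $g$ in a small neighbourhood of $f$ the condition $R_g=R_f$ is automatically satisfied at $c$; yet $g$ need not be in $\mathcal{T}(f)$. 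Theorem~\ref{char} (A$\Leftrightarrow$B) is a statement about one-parameter families, not pairs of maps: constancy of critical relations along a smooth curve is equivalent to the curve being a deformation, but equality of critical relations between two isolated maps does not imply conjugacy. So the set $\mathcal{T}(f)\cap U$ is of codimension $2n-2$ but is \emph{not} cut out by $2n-2$ explicit scalar conditions on $g$; the constraint is genuinely implicit.

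The paper's resolution is the key idea you are missing. Instead of a map with finite-dimensional target, one introduces the entire critical orbit $(x_a)_{a\in\mathcal{O}}\in\ell^\infty_C(\mathcal{O})$ (where $\mathcal{O}=\bigcup_{i\ge0}f_0^i(\hat C)$) as an additional \emph{unknown} of the implicit function theorem, and considers
$$G\colon U\times W\longrightarrow \ell^\infty(\mathcal{O}),\qquad
G\big(v,\{\beta_c\},(x_a)\big)_a=\phi_{s(a),v,\{\beta_c\}}(x_{f_0(a)})-x_a,$$
built from locally defined inverse branches $\phi_i$ of the candidate map. One then shows that the partial derivative of $G$ in the directions $\mathbb{R}^{\hat C}\times\ell^\infty_0(\mathcal{O})$ is invertible (this is where Lemma~\ref{discrete} and Proposition~\ref{flex} enter) and applies the IFT to solve \emph{simultaneously} for the transverse parameters $(\beta_c)$ and the critical orbit $(x_a)$ as $C^{[k/2]}$ functions of $v\in E^h(f_0)$. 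The submanifold $\mathcal{T}(f)$ is then read off from the $(\beta_c)$-components of the implicit function; the infinite-dimensional unknown is precisely what converts the globally defined, non-explicit constraint you were after into a local, analysable equation.

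The regularity count is also obtained by a different mechanism than the one you sketch. The exponent $[k/2]$ does not come from iterating the Birkhoff-sum representation of $\alpha$ in Theorem~\ref{tt} (that representation is not used at all in the proof of Theorem~\ref{top}); it comes from the smooth-dependence theorem of Farkas and Garay for inverse branches: $(v,\{\beta_c\})\mapsto\phi_{i,v,\{\beta_c\}}$ is $C^r$ Fr\'echet differentiable into $C^{k-r}(J_i)$ for every $r<k$, the evaluation $\phi_{s(a),\cdot}(x_{f_0(a)})$ at a varying point then gives joint regularity $\min\{r,k-r\}$, and $r=[k/2]$ is the optimal balance. A Merrien--Whitney extension operator is needed beforehand to make the inverse-branch construction legitimate near the critical points. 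Finally, you correctly flag that equality of the zero set with $\mathcal{T}(f)$ must be proved separately; in the paper this is done by integrating the Log-Lipschitz vector field from Theorem~\ref{char}(E) to produce a curve of genuine conjugacies and matching its transverse coordinates with those produced by the IFT — this step is not a citation but a nontrivial argument closing the loop between the abstract chart and the actual topological class.
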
 
\begin{proof} Our approach is to use the Implicit Function Theorem. To this end, choose $f_0\in \mathcal{T}(f)$. We will show that there is a neighbourhood $\mathcal{W}$ of $f_0$ such that $\mathcal{T}(f)\cap \mathcal{W}$ is defined implicitly  by an equation involving certain function  $G$ we now define. \\

\noindent  Let $\{w_c\}_{c\in \hat{C}}\subset  \mathcal{B}^k(C)$ be such that $J(f_0,w_c,c)=1$ and $J(f_0,d,w_c)=0$ for $d\neq c$.  Let $U\subset E^h(f_0)\times \mathbb{R}^{\hat{C}}$ be an open neighbourhood of $0$ in $E^h_f \times \mathbb{R}^{\hat{C}}$  such that for every 
$$(v,\{\beta_c\}_{c\in \hat{c}})\in U$$
we have that $$f_{(v,\{\beta_c\}_{c\in \hat{c}})}= f_0+ v+ \sum_{c\in \hat{C}} \beta_c w_c\in  \mathcal{B}^k(C)$$ 
satisfies $$\theta=\inf_{(v,\{\beta_c\}_{c\in \hat{C}})\in U}  |Df_{(v,\{\beta_c\}_{c\in \hat{C}})}|_\infty > 1.$$
Moreover note that
$$(v,\{\beta_c\}_{c\in \hat{c}}) \in   E^h(f_0) \times \mathbb{R}^{\hat{C}} \mapsto    f_{(v,\{\beta_c\}_{c\in \hat{c}})} \in  \mathcal{B}^k(C)$$
parametrizes $\mathcal{B}^k(C)$, so  from now on we use $(v,\{\beta_c\}_{c\in \hat{c}})$ to represent $f_{(v,\{\beta_c\}_{c\in \hat{c}})}$ without further notice. 
Let
$$I_i=(c_i,c_{i+1})$$
and $$\hat{I}_i= \{c_i^+,c_{i=1}^-\}\cup \{x^{\pm}\colon \ x\in (c_i,c_{i+1})\}.$$
By  Merrien \cite{merrien}(see also Fefferman \cite{fefferman} for a historical account of related results)    one can find a bounded linear transformation 
$$T_i\colon C^k[c_i,c_{i+1}]\rightarrow C^k(\mathbb{R})$$
that such that $T_i(g)$ is an extension of $g$. Let 
$$g_{(i, v,\{\beta_c\}_{c\in \hat{C}})}\colon \mathbb{R} \rightarrow \mathbb{R}$$ 
be defined by $g_{(i,v,\{\beta_c\}_{c\in \hat{C}})}= T_i(f_{v,\{\beta_c\}_{c\in \hat{C}})}/[c_i,c_{i+1}])$.
$$(v,\{\beta_c\}_{c\in \hat{C}})\mapsto g_{(i,v,\{\beta_c\}_{c\in \hat{C}})}$$
is  Fr\'echet $C^{\infty}$-differentiable (indeed, an affine map) considering the product norm in   $\mathcal{B}^k(C)\times \mathbb{R}^{\hat{C}}$ and the $C^{k}(\mathbb{R})$-norm in its image. Reducing $U$ if necessary, there are  $\delta > 0$ and $\epsilon > 0$ such that  $g_{(i,v,\{\beta_c\}_{c\in \hat{C}})}$ is a diffeormorphism on $[c_{i}-\delta,c_{i+1}+\delta]$ for every $i$ and $(v,\{\beta_c\}_{c\in \hat{C}}) \in U$ and
 $$\theta_1=\inf_{(v,\{\beta_c\}_{c\in \hat{C}})\in U}  \inf_{x\in [c_{i}-\delta,c_{i+1}+\delta]}   |Dg_{(i,v,\{\beta_c\}_{c\in \hat{C}})}(x)|_\infty > 1,$$
 and moreover
 $$\{ y\colon dist(y,[f_0(c_i^+),f_0(c_{i+1}^-)])< \epsilon \} \subset g_{(\hat{I}_i,v,\{\beta_c\}_{c\in \hat{C}})}([c_{i}-\delta,c_{i+1}+\delta]).$$
 for every $i$ and $(v,\{\beta_c\}_{c\in \hat{C}})\in U$. Let
 
$$J_i= \{ y\colon dist(y,[f_0(c_i^+),f_0(c_{i+1}^-)])\leq  \epsilon/2 \}.$$

So we can consider  the ``inverse branches" 
$$\phi_{i,v,\{\beta_c\}_{c\in \hat{C}}}\colon J_i \rightarrow  \mathbb{R}$$ 
defined by  $ g_{(v,\{\beta_c\}_{c\in \hat{C}})}\circ \phi_{i,v,\{\beta_c\}_{c\in \hat{C}}}(x)=x$. 
By   Farkas and Garay \cite{fg1}\cite{fg2}  for every $r<  k$ the map 
$$(v,\{\beta_c\}_{c\in \hat{c}})\mapsto  \phi_{i,v,\{\beta_c\}_{c\in \hat{C}}}$$
is $C^{r}$ Fr\'echet differentiable considering the product norm in   $\mathcal{B}^k(C)\times \mathbb{R}^{\hat{C}}$ and the $C^{k-r}(J_i)$-norm in its image. Let 
$$\mathcal{O}=\cup_{i\geq 0} f_0^i(\hat{C}).$$
Define $s\colon \hat{I}\rightarrow \mathbb{N}$ by $s(a)=i$  if  $a\in \hat{I}_i.$ Consider
$$ \ell^\infty(\mathcal{O})=\{ (x_a)_{a\in\mathcal{O}}\in \mathbb{R}^{\mathcal{O}}\colon \sup_{a\in \mathcal{O}} |x_a|< \infty\}$$
endowed with the supremum norm, its affine subspace
$$ \ell^\infty_C(\mathcal{O})=\{ (x_a)_{a\in\mathcal{O}}\in \ell^\infty(\mathcal{O}) \colon \ x_{c^\pm}=c \ for \ every \ c\in C\}.$$
and its corresponding  tangent space
$$ \ell^\infty_0(\mathcal{O})=\{ (x_a)_{a\in\mathcal{O}}\in \ell^\infty(\mathcal{O}) \colon \ x_{c^\pm}=0 \ for \ every \ c\in C\}.$$
If 
$$P=(p_a)_{a\in \mathcal{O}},$$
where $p_a=a$ for every $a\in \mathcal{O}$, then  $P\in  \ell^\infty_C(\mathcal{O})$ and there is $\eta > 0$ such that if $X= (x_a)_{a\in\mathcal{O}} \in  \ell^\infty_C(\mathcal{O}) $ satisfies 
$$|X-P|_{\ell^\infty(\mathcal{O})}< \eta$$
then $x_{f_0(a)} \in J_{s(a)}$. Consequently if we denote
$$W=\{X\in \ell^\infty_C(\mathcal{O})\colon |X-P|_{\ell^\infty(\mathcal{O})}< \eta\}$$
we can define 
$$G\colon U \times W\rightarrow  \ell^\infty(\mathcal{O})$$
by 
$$G(v,\{\beta_c\}_{c\in \hat{C}},(x_a)_{a \in \mathcal{O}})= (y_a)_{a \in \mathcal{O}},$$
where $$y_a=\phi_{s(a), v,\{\beta_c\}_{c\in \hat{C}}}(x_{f_0(a)}) -x_a,$$for $a\in \mathcal{O}\setminus \hat{C}$, and  $$y_{a^\pm} =\phi_{s(a^\pm),v,\{\beta_c\}_{c\in \hat{C}}}(x_{f_0(a^\pm)})  - a, $$    if $a\in  C$. For $1\leq r < k$ we have that $G$ is $\tilde{r}$-Fr\'echet differentiable, where $\tilde{r}=\min\{k-r,r\}.$ Choosing $r=[k/2]$ we have $\tilde{r}=[k/2]$.
Note that
$$G(0,0,P)=0.$$
For $c\in \hat{C}$ we have
$$\partial_{\beta_c}G(0,0,P) = \Big( - \frac{w_c(a)}{Df_0(a)} \Big)_{a\in \mathcal{O}},$$
and for $b\in \mathcal{O}\setminus \hat{C}$
$$\partial_{x_b}G(0,0,P) = \Big(  y_a^b \Big)_{a\in \mathcal{O}},$$
where 
$$y_{a}^b=\begin{cases}
\frac{1}{Df_0(a)}  &if \ f_0(a)=b \ and \ b\neq a,\\
\frac{1}{Df_0(a)}  &if \ f_0(a)=b \ and \ a\in \hat{C},\\
\frac{1}{Df_0(a)} -1 &if \ a=f_0(a)=b \ \not\in \hat{C}, \\
 -1  &if \ f_0(a)\neq b \ and \ a=b\not\in \hat{C}, \\
0    &  \text{in other cases.}
\end{cases} $$
So the directional derivative of $G$ with respect to the subspace  $\{0\}\times \mathbb{R}^{\hat{C}}\times \ell^\infty_0(\mathcal{O})$ with $0\in E^h(f_0)$,    is the linear transformation
$$\mathcal{D}\colon \mathbb{R}^{\hat{C}}\times \ell^\infty_0(\mathcal{O}) \mapsto \ell^\infty(\mathcal{O})$$
given by 
\begin{align*} \label{inv2} \mathcal{D}((t_c)_{c\in \hat{C}}, (\alpha(b))_{b\in \mathcal{O}})&= \sum_{c\in \hat{C}} \partial_{\beta_c}G(0,0,P)\cdot t_c  + \sum_{b\in \mathcal{O}\setminus \hat{C}} \partial_{x_b}G(0,0,P)\cdot \alpha(b)\\
&=(z_a)_{a\in \mathcal{O}},\end{align*} 
where  for every $(t_c)_{c\in \hat{C}}$ and $(\alpha(b))_{b\in \mathcal{O}}\in \ell^\infty_0(\mathcal{O})$  the  $a$-th component $z_a$ 
is
\begin{equation}\label{inv2} z_a=- \frac{1}{Df_0(a)} \sum_{c\in \hat{C}} t_c  w_c(a)  + \frac{\alpha(f_0(a))}{Df_0(a)} -\alpha(a).\end{equation} 
We claim that  $\mathcal{D}$  is invertible.  To prove that $T$ is a subjective, let $(z_a)_{a\in \mathcal{O}}\in  \ell^\infty(\mathcal{O})$. Then there is an unique vector $(t_c)_{c\in \hat{C}} \in  \mathbb{R}^{\hat{C}}$ such that 
$$J(f_0,c,(Df_0(a)z_a+\sum_{c\in \hat{C}} t_c w_c(a))_{a\in \mathcal{O}})=0$$
for every $c\in \hat{C}$. By Lemma \ref{discrete} there is an unique  $(\alpha(a))_{a\in \mathcal{O}}\in \ell_0^\infty(\mathcal{O})$ such that 
$$Df_0(a)z_a+\sum_{c\in \hat{C}} t_c w_c(a) =\alpha(f_0(a))- Df_0(a)\alpha(a)$$
for every $a\not\in \hat{C}$ and $\alpha(c)=0$ for every $c\in \hat{C}$. So $((t_c)_{c\in \hat{C}}, (\alpha(b))_{b\in \mathcal{O}})$ satisfies (\ref{inv2}). To prove the injectivity of $\mathcal{D}$, suppose that 
$$\mathcal{D}((t_c)_{c\in \hat{C}}, (\alpha(a))_{a\in \mathcal{O}})=\mathcal{D}((q_c)_{c\in \hat{C}}, (\beta(a)))_{a\in \mathcal{O}}).$$
Then
$$ \frac{1}{Df_0(a)} \sum_{c\in \hat{C}} (t_c-q_c)  w_c(a)  = \frac{(\alpha-\beta)(f_0(a))}{Df_0(a)} -(\alpha-\beta)(a).$$
for every $a\in \mathcal{O}$. That implies
$$J(f_0,c,(\sum_{c\in \hat{C}} (t_c-q_c)  w_c(a) )_{a\in \mathcal{O}})=0$$
for every $c\in \hat{C}$ and consequently $t_c=q_c$ for every $c\in \hat{C}$. By Lemma \ref{discrete} we have $\alpha-\beta=0$. \\

\noindent So $\mathcal{D}$ is invertible. By the Implicit Function Theorem there is an open neighbourhood $O$ of $(0,0,P)$ and a $C^{[k/2]}$-function 
$$\theta\colon \mathcal{U}\rightarrow \mathbb{R}^{\hat{C}}\times \ell_C^\infty(\mathcal{O})$$
where $\mathcal{U}$ is an open neigbourhood of $0$ in $E^h(f_0)$ such that 
\begin{align} \label{ift}&\{(v,(\beta_c)_{c\in \hat{C}},(x_a)_{a \in \mathcal{O}})\in O\colon G(v,(\beta_c)_{c\in \hat{C}},(x_a)_{a \in \mathcal{O}})=0\}\\
&=\{(v,\theta(v))\colon \ v\in \mathcal{U}\} \nonumber.\end{align} 

\noindent It remains to show that this subset  is indeed $\mathcal{T}(f)$ near to $f_0$. This is {\it not} obvious since the definition of $G$ involves the extension operators $T_ i$. \\

\noindent We claim that  for every $\epsilon > 0$ there is a neighbourhood $\mathcal{V} \subset \mathcal{U}$ of $0$ in $E^h(f_0)$ such that for each $v\in \mathcal{V}$ there is $(\beta_c(v))_{c\in \hat{C}}$ such that $|(\beta_c(v))_{c\in \hat{C}}|_{\mathbb{R}^{\hat{C}}}< \epsilon$ such that  if $f$ is defined by 
\begin{equation}\label{ff} f_v= f_0+ v+ \sum_{c\in \hat{C}} \beta_c(v) w_c,  \end{equation} 
 and we define recursively $x_{c^\pm}^{f_v}=f_v(c^\pm)$ for $c\in C$ and $x_{f_0(a)}^{f_v}=f_v(x_a^{f_v})$ we have 
\begin{itemize}
\item[-]  $f_v$ belongs in the topological class of $f_0$,
\item[-] $(v, (\beta_c(v))_{c\in \hat{C}},(x_a^{f_v})_{a \in \mathcal{O}})$ belongs to $O$,
\item[-] $G(v, (\beta_c(v))_{c\in \hat{C}},(x_a^{f_v})_{a \in \mathcal{O}})=0$.
\end{itemize} 
The claim and (\ref{ift})  imply  that there is an open neighborhood $\mathcal{W}$ of $f_0$ such that $$\{(v,\theta(v))\colon \ v\in \mathcal{V}\}=\{(v, (\beta_c(v))_{c\in \hat{C}},(x_a^{f_v})_{a \in \mathcal{O}}) \colon v\in   \mathcal{V}  \}=\mathcal{T}(f)\cap \mathcal{W},$$
so $\mathcal{T}(f)\cap \mathcal{W}$ is a Banach manifold modelled on $E^h(f_0)$. Since $E^h(f_0)$ and $E^h(f_0)$ are isomorphic spaces (once there are subspaces with the same finite codimension) this completes the proof of the theorem. \\

\noindent    To prove the claim, note that,  reducing $\mathcal{U}$  if necessary, there is an open  neighbourhood $\mathbb{U}$ of $0$ in $\mathbb{R}^{\hat{C}}$ such that   for every 
$w\in \mathcal{U}$ and $(\beta_c)_{c\in \hat{C}}\in \mathbb{U}$ and $t\in (-1,1)$ there is a unique $u=u(t,(\beta_c)_{c\in \hat{C}}) \in $ such that 
$$u= w+ \sum_{c\in \hat{C}} \zeta_c w_c,$$
  $J(f,u,c)=0$ for every $c\in \hat{C}$, and
$$f =f_0+tw+ \sum_{c\in \hat{C}} \beta_c w_c.$$
So $u$ is a vector field in the open set 
$$\mathbb{A}=\{f_0+tw+ \sum_{c\in \hat{C}} \beta_c w_c, \ with \ t\in (-1,1) \ and \   (\beta_c)_{c\in \hat{C}}\in \mathbb{U}  \}  $$
of  an affine subspace. This vector field  is continuous due to Theorem \ref{infc}, so we can find an $C^1$ integral curve $g_t$, $|t|< \epsilon$,  with $g_0=f_0$. Here we can choose $\epsilon>0$ that does not depends on $v$.   By Theorem \ref{char} we have that $g_t$ belongs to the topological class of $f_0$ for every $t$. Reducing $\mathcal{U}$ and $\mathbb{U}$  again we can assume that $g_t\in O$ for every $v\in \mathcal{U}$, $|t|< \epsilon$.  Notice that 
$g_t = f_0 + t w + \sum_{c\in \hat{C}} \beta^t_c w_c$
for some $ \beta^t_c \in \mathbb{R}$.   
Define $\mathcal{V}= \epsilon/2 \ \mathcal{U}$.  For every $u\in  \mathcal{V}$ choose $w=2u/\epsilon$ and $f=g_{\epsilon/2}$.  If $h$ is the homeomorphism that satisfies $h\circ f_0= f\circ h$ and $h(c)=c$ for every $c\in C$ then  
$$G(v,(\beta_c^{\epsilon/2})_{c\in \hat{C}},(h(a))_{a \in \mathcal{O}})=0$$
since  $h(a)\in (c_{s(a)},c_{s(a)+1})$ for every $a\in \mathcal{O}\setminus \hat{C}$. By Theorem \ref{char} once can choose $\mathcal{U}$ small enough such that
$$|(h(a))_{a \in \mathcal{O}} -P|_{\ell^\infty(\mathcal{O})}< \eta.$$
This proves the claim. 
 \end{proof}

\section{Quasi-symmetric classes} 
One of the main tools in one-dimensional dynamics  is {\it quasisymmetric rigidity}. Often the conjugacy $h$ between two  real-analytic maps  is a  quasisymmetric map; that is, there is $C$ such that 
$$\frac{1}{C}\leq    \frac{|h(x+\delta)-h(x)|}{|h(x)-h(x-\delta)|}\leq C$$
for every $x, x+\delta, x-\delta$ in the phase space.  The proof that a conjugacy is quasisymmetric typically uses  quite sophisticated methods, including complex analytic extensions of the original real dynamics and quasiconformal maps.  In the setting of real-analytic dynamics, a quasisymmetric conjugacy can be extended to a quasiconformal conjugacy between the extended complex dynamics (a method pioneered by Sullivan \cite{sullivan}), which open the doors to use methods as holomorphic motions, Beltrami paths,  and  quasiconformal surgeries.  See Sullivan \cite{sullivan}, de Melo and van Strien \cite{ms}, Lyubich \cite{lyubich2}, Graczyk and \'{S}wiatek \cite{gs}, Kozlovski,  Shen and  van Strien \cite{kss}, and  Clark, van Strien and Trejo \cite{cst}  for more information. 

Piecewise expanding maps, however, have discontinuities (either in the map itself or its derivative) that are an additional difficulty for (and maybe even precludes) the use of complex extension methods. {\it One may wonder if  the topological classes of such maps coincide with  their quasisymmetric classes}. We are going to see that this  is {\it not} always the case. More unexpectedly,  some topological classes are  {\it laminated}  by quasisymmetric classes, which are also  submanifolds of  finite codimension. 

In this section, we have  strong assumptions on the dynamics of the maps. However, we obtain a quite complete description in this setting.

\begin{proposition}[Obstruction for quasisymmetric conjugacies]\label{obs} Let $f \in  \mathcal{B}^k_{exp}(C)$ be a piecewise expanding map.  Suppose  there is $c\in C\setminus \partial I$ such that 

$$  \inf_{x\in \mathcal{O}^+(f,c^\pm)\setminus \hat{C}} dist(x,\hat{C}) > 0$$
and there  is $N_{c^\pm} , M_{c^\pm} \in \mathbb{N}$  such that $$f^{M_{c^+}} (f^{N_{c^+}}(c^+))=f^{N_{c^+}}(c^+)$$ and  $$f^{M_{c^-}}(f^{N_{c^-}}(c^-))~=~f^{N_{c^-}}(c^-).$$
Let $g \in \mathcal{B}^k_{exp}(C)$ be a map such that there is an orientation preserving homeomorphism $h\colon I\rightarrow I$ such that $h\circ f = g \circ h$ and $h(c^\pm)=c^\pm$. If 
$$\frac{\ln |Df^{M_{c^+}}(f^{N_{c^+}}(c^+))|}{\ln |Df^{M_{c^-}}(f^{N_{c^-}}(c^-))|} \neq \frac{\ln |Dg^{M_{c^+}}(g^{N_{c^+}}(c^+))|}{\ln |Dg^{M_{c^-}}(g^{N_{c^-}}(c^-))|} $$
then $h$ is not quasisymmetric. 
\end{proposition}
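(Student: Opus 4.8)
The plan is to turn the inequality between the two multiplier ratios into an incompatibility with quasisymmetry of $h$, exploiting that at the \emph{single} point $c$ a quasisymmetric map can distort the ratio of two flanking intervals of comparable length only by a bounded factor, whereas a comparison of the contraction rates of suitably chosen nested intervals on the two sides of $c$ forces that ratio to be essentially fixed. First I would normalise: setting $M:=M_{c^+}M_{c^-}$, $p^\pm:=f^{N_{c^\pm}}(c^\pm)$ and $q^\pm:=h(p^\pm)$, the points $p^\pm$ are $f$-periodic of period $M$, and since $h\circ f=g\circ h$ and $h(c^\pm)=c^\pm$ one has $g^{N_{c^\pm}}(c^\pm)=q^\pm$ with $q^\pm$ $g$-periodic of period $M$. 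Writing $A_f=\ln|Df^{M}(p^+)|$, $B_f=\ln|Df^{M}(p^-)|$, $A_g=\ln|Dg^{M}(q^+)|$, $B_g=\ln|Dg^{M}(q^-)|$ (all positive since $f,g$ are expanding) and using $Df^{M}(p^+)=(Df^{M_{c^+}}(p^+))^{M_{c^-}}$ together with the three analogous identities, the hypothesis of the proposition is precisely the statement $A_f/B_f\neq A_g/B_g$, which is what I would contradict.

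Next I would build the nested intervals. The orbit hypotheses on $c^\pm$ guarantee that $f^{N_{c^+}}$ is a diffeomorphism of a right-sided neighbourhood of $c$ onto a one-sided neighbourhood of $p^+$ and that $f^{M}$ is a local diffeomorphism near $p^+$, and likewise on the left and, through $h$, for $g$ near $q^\pm$. Fix a small interval $W^+$ with $p^+$ as one endpoint, on the side met by $f^{N_{c^+}}((c,c+\varepsilon))$, so small that $f^{M}(W^+)\supset W^+$ and $W^+,f(W^+),\dots,f^{M-1}(W^+)$ miss $C$; let $W^+_n$ be the component of $f^{-nM}(W^+)$ with endpoint $p^+$, and put $V^+_n:=(f^{N_{c^+}}|)^{-1}(W^+_n)$, a nested family of right-sided intervals at $c$. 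Since $f^{nM}$ maps $W^+_n$ diffeomorphically onto the fixed interval $W^+$ and $W^+_n$ lies in one element of $\mathcal{P}^{nM}$, the bounded-distortion inequality~(\ref{ddis2}) of Lemma~\ref{bvpartition} bounds the distortion of $f^{nM}$ on $W^+_n$ uniformly in $n$; combined with $Df^{nM}(p^+)=(Df^{M}(p^+))^{n}$ and the bounded distortion of the fixed map $f^{N_{c^+}}$ near $c$, this gives $\ln|V^+_n|=-nA_f+O(1)$ as $n\to\infty$, with the $O(1)$ \emph{uniform in $n$}; symmetrically one obtains left-sided intervals $V^-_n$ at $c$ with $\ln|V^-_n|=-nB_f+O(1)$. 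Because $h$ is an orientation-preserving homeomorphism fixing $c$ that conjugates $f$ to $g$, the intervals $h(V^\pm_n)$ are again one-sided intervals at $c$ on the correct sides, $g^{N_{c^+}+nM}$ maps $h(V^+_n)$ diffeomorphically onto the fixed interval $h(W^+)$ along the $C$-avoiding $g$-orbit of $q^+$, and the same distortion argument for $g$, using $Dg^{nM}(q^+)=(Dg^{M}(q^+))^{n}$, yields $\ln|h(V^+_n)|=-nA_g+O(1)$ and $\ln|h(V^-_n)|=-nB_g+O(1)$.

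Then I would run the quasisymmetry argument. Suppose $h$ is quasisymmetric, with gauge $\eta$ depending only on its quasisymmetry constant. For large $n$ choose $m=m(n)=\lfloor nA_f/B_f\rfloor$; then $mB_f=nA_f+O(1)$, so by the previous step there is $K\ge1$ independent of $n$ with $K^{-1}\le|V^+_n|/|V^-_m|\le K$. Writing $y_n>c$ for the endpoint of $V^+_n$ other than $c$ and $z_m<c$ for that of $V^-_m$, we have $z_m<c<y_n$ with $|c-y_n|\le K|c-z_m|$ and $|c-z_m|\le K|c-y_n|$, so quasisymmetry at $c$ gives $\eta(K)^{-1}\le|h(V^+_n)|/|h(V^-_m)|\le\eta(K)$, whence $\ln|h(V^+_n)|-\ln|h(V^-_m)|=O(1)$. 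By the estimates above this reads $nA_g-mB_g=O(1)$; dividing by $n$, letting $n\to\infty$ and using $m/n\to A_f/B_f$, we conclude $A_g=(A_f/B_f)B_g$, i.e. $A_f/B_f=A_g/B_g$, contradicting the normalisation. Hence $h$ cannot be quasisymmetric.

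The step I expect to be the main obstacle is making the error terms in the second paragraph genuinely $O(1)$ rather than merely $o(n)$: only with a \emph{uniform} constant are $V^+_n$ and $V^-_{m(n)}$ of comparable length, which is what quasisymmetry actually controls. This is precisely where the hypotheses that the relevant forward orbits stay away from $\hat{C}$, and that $f,g$ are smooth enough for Lemma~\ref{bvpartition} to apply, are used. A secondary but genuine technical point is to check that $h$ carries the $C$-free neighbourhoods of the $f$-orbits used above to $C$-free neighbourhoods of the corresponding $g$-orbits — equivalently, that $h$ respects the lap structure along those orbits — so that the distortion estimate is legitimate on the $g$-side; here the orientation-preserving conjugacy property, together with the standing presupposition that the $g$-multipliers $Dg^{M_{c^\pm}}(g^{N_{c^\pm}}(c^\pm))$ are well defined, is what one invokes.
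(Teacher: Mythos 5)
Your argument is correct and is essentially the same as the paper's: both proofs exploit bounded distortion of $f^n$ (resp.\ $g^n$) along the $C$-avoiding orbits of $f^{N_{c^\pm}}(c^\pm)$ to show that the length of a one-sided interval at $c$ decays at a rate governed by the periodic multiplier, and then use quasisymmetry at the single point $c$ to force the two decay rates to be proportional with the same constant of proportionality for $f$ and for $g$. The only difference is the parametrization: the paper fixes symmetric intervals $J^1=[c,c+\delta]$, $J^2=[c-\delta,c]$ and compares the escape times $q_1(\delta),q_2(\delta)$ as $\delta\to 0^+$, whereas you fix the iteration counts $n,m$ and compare pullback intervals $V^+_n,V^-_m$ after choosing $m(n)$ so that the lengths match — a dual formulation of the same estimate.
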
 
\begin{proof} Let $T$ be a  multiple of    $p(f)$, $N_{c^\pm}$  and  $M_{c^\pm}$ and  $F(x)=f^T(x)$, $G(x)=g^T(x)$. We  have $F^2(c^\pm)=F(c^\pm)$, $h\circ F=G\circ h$,  and 
\begin{equation} \label{klkl} \frac{\ln |DF(F(c^+))|}{\ln |DF (F(c^-))| } \neq \frac{\ln |DG(G(c^+))|}{\ln |DG (G(c^-))|}.\end{equation} 
Define 
$$  d=\frac{1}{2} \inf_{x\in \mathcal{O}^+(F,c)\setminus \hat{C}_F} dist(x,\hat{C}_F) > 0.$$

Let $J^1=[c,c+\delta]$ and $J^2=[c-\delta,c]$  and $q_k=q_k(\delta)$, with $k=1,2$ be  the smallest integers such that 
$$    |DF^{q_k}(c_k)||J^k| > \frac{d}{ \Crr{dist}}.$$
Then $F^{q_k}$ is a diffeomorphism on $J^k$ and 
$$|I|\geq |F^{q_k}J^k|\geq \frac{d}{ \Crr{dist}^2}.$$
Since $F$ and $G$ are conjugated by $h$  there is $\Cll{upper1},\Cll{upper2} > 0$ such that 
$$|I|\geq |G^{q_k}h(J^k)|\geq  \Crr{upper1},$$
and (due an usual bounded distortion argument)
$$    |DG^{q_k}(c_k)||h(J^k)| >  \Crr{upper2},$$
where $\Crr{upper1}$, $\Crr{upper2}$ does not depend on $h$. As a consequence
we have 
$$  - \ln|J^k|  - q_k \ln  |DF(F(c_k))| = O(1)$$
so since $|J^1|=|J^2|=\delta$ and $q_k(\delta)\rightarrow  +\infty$ when $\delta$ tends to zero we obtain 
$$\frac{\ln  |DF(F(c^+))|}{ \ln |DF(F(c^-))|}=\lim_{\delta\rightarrow 0^+} \frac{q_1(\delta)}{q_2(\delta)}.$$
But in other hand
$$ - \ln |h(J^k)| - q_k \ln  |DG(G(c_k))|= O(1),$$
so if $h$ is quasisymmetric we have $\ln |h(J^1)| - \ln |h(J^1)|=O(1)$ and 
$$\frac{\ln  |DG(G(c^+))|}{ \ln |DG(G(c^-))|}=\lim_{\delta\rightarrow 0^+} \frac{q_1(\delta)}{q_2(\delta)}.$$
we conclude that 
$$\frac{\ln  |DF(F(c^+))|}{ \ln |DF(F(c^-))|}= \frac{\ln  |DG(G(c^+))|}{ \ln |DG(G(c^-))|}$$
which is impossible due (\ref{klkl}). 
\end{proof} 

\begin{remark} We may wonder if there are others obstructions to  quasisymmetric conjugacy than the one described by  Proposition \ref{obs}. Note that this obstruction does not occur in piecewise expanding unimodal maps. Is the conjugacy always quasisymmetric in this case? 
\end{remark}

We say that  $f \in  \mathcal{B}^k_{exp}(C)$ satisfies the  Assumption FOorMC if \\

\noindent {\bf Finite orbit or Misiurewicz condition (FOorMC).}   We have 
$$ \inf_{c\in C} \inf_{x\in \mathcal{O}^+(f,c^\pm)\setminus \hat{C}} dist(x,\hat{C}) > 0$$
and all $c\in C\setminus \partial I$  one of the following conditions holds
\begin{itemize}
\item {\bf Type I (Finite orbit).} there is $N_{c^\pm} , M_{c^\pm} \in \mathbb{N}$   such that 
$$f^{M_{c^\pm}} (f^{N_{c^\pm}}(c^\pm))=f^{N_{c^\pm}}(c^\pm).$$
\item  {\bf Type II (Misiurewicz and Continuous).}   $f^i$ is  continuous at $c$ for every $i\geq 0$ and there is $N_c=N_{c^+}=N_{c^-}$ such that $f^i(c)\not\in C$ for every $i\geq N_c$. \\
\end{itemize}

\begin{proposition}[Quasisymmetric conjugacies] \label{quasi2} Let $f \in  \mathcal{B}^k_{exp}(C)$ be a piecewise expanding map satisfying Assumption FOorMC. Let $g\in  \mathcal{B}^k_{exp}(C)$ and suppose there is a homeomorphism $h$  such that $h\circ f = g \circ h$ and $h(c^\pm)=c^\pm$. Suppose that  for every Type I critical point $c\in C$ 
$$\frac{\ln |Df^{M_{c^+}}(f^{N_{c^+}}(c^+))|}{\ln |Df^{M_{c^-}}(f^{N_{c^-}}(c^-))|} = \frac{\ln |Dg^{M_{c^+}}(g^{N_{c^+}}(c^+))|}{\ln |Dg^{M_{c^-}}(g^{N_{c^-}}(c^-))|}.$$
Then $h$ is a  quasisymmetric map.
\end{proposition}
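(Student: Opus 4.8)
The goal is the two-sided bound $C_0^{-1}\le \frac{|h(x+\delta)-h(x)|}{|h(x)-h(x-\delta)|}\le C_0$, uniformly in $x$ and in small $\delta>0$ (for $\delta$ bounded below it is immediate from uniform continuity of $h$ and $h^{-1}$ on the compact $I$, plus compactness). I would prove it by a renormalization argument. Fix a ``macroscopic'' scale $d_1>0$, small with respect to the constant $d_0$ of Assumption FOorMC, to the gaps between consecutive points of $C$, and to the finitely many distances between $C$ and the pre-periodic segments and periodic orbits of the critical points. Given $J=[x-\delta,x+\delta]$, with halves $J^-=[x-\delta,x]$ and $J^+=[x,x+\delta]$, let $q$ be the first time at which $|f^q(J)|\ge d_1$ or $f^q(J)$ contains a point of $C\setminus\partial I$ in its interior. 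Up to time $q$ the maps $f^i|_J$ are diffeomorphisms (the dynamics is never defined across $\partial I$, so only a point of $C\setminus\partial I$ can split an interval), hence of bounded distortion by the classical $C^{1+\mathrm{Lip}}$ estimate used in Lemma~\ref{bvpartition}; and since $h\circ f=g\circ h$ and $h$ fixes $C$ pointwise, one has $g^i(h(J))=h(f^i(J))$ for all $i$, so $g^q|_{h(J)}$ is also a diffeomorphism of bounded distortion. Therefore $\frac{|h(J^+)|}{|h(J^-)|}\asymp\frac{|g^q h(J^+)|}{|g^q h(J^-)|}=\frac{|h(f^qJ^+)|}{|h(f^qJ^-)|}$, which reduces the problem to the interval $f^q(J)$.

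If $|f^q(J)|\ge d_1$, then $f^q(J^\pm)$ have size $\asymp d_1$ and, by uniform continuity of $h$ and $h^{-1}$, $h(f^qJ^\pm)$ have comparable definite size, so the ratio is $\asymp 1$ and we are done. Otherwise $f^q(J)$ straddles some $c\in C\setminus\partial I$ with $|f^q(J)|<d_1$, and $c$ is of Type~I or Type~II. Type~II is easy: there $f$ — and $g$, by the conjugacy and $h(c^\pm)=c^\pm$ — is continuous at $c$, and the structure of a continuous Misiurewicz orbit (all iterates continuous at $c$, and the orbit eventually $d_0$-away from $C$) forces $f^{q+j}(J)$ to remain a single interval that never again straddles a discontinuity and, for $j$ large, stays within $d_0/2$ of a point $d_0$-away from $C$; iterating to macroscopic size, $f^{q'}|_J$ and $g^{q'}|_{h(J)}$ are piecewise diffeomorphisms (only bounded-ratio corners) of bounded distortion with definite-size images, and the ratio is again $\asymp 1$.

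The heart is the Type~I case. Split $f^q(J)$ at $c$ into the part $A$ on the $c^-$ side and the part $B$ on the $c^+$ side; bounded distortion of $f^q|_J$ gives $|f^q(J^-)|\asymp|f^q(J^+)|\asymp|f^q(J)|=:\ell$, and each of $A$, $B$ and of the sub-intervals of $f^q(J^\pm)$ abutting $c$ is a short interval hanging off $c$, while $f^q(J^-)$ is a sub-interval of $A$ of length $\asymp\ell\asymp|A|$. Now the key estimate: a short interval $L$ hanging off $c$ on the $c^-$ side never straddles a discontinuity under further iteration — it shadows the eventually $M_{c^-}$-periodic orbit of $c^-$, staying on one side of every critical point it approaches — so, iterating until $|f^{(\cdot)}(L)|$ first reaches scale $d_1$, which (as $L$ is short) happens after $\sim j$ periods past the bounded pre-period, bounded distortion near the periodic orbit gives $|L|\,(\Lambda^-_f)^{j}\asymp 1$ with $\Lambda^-_f=|Df^{M_{c^-}}(f^{N_{c^-}}(c^-))|>1$; on the $g$-side $g^{m}(h(L))=h(f^{m}(L))$ has macroscopic image at the same instant, so uniform continuity of $h,h^{-1}$ and bounded distortion of $g$ along the hanging orbit give $|h(L)|\,(\Lambda^-_g)^{j}\asymp 1$, $\Lambda^-_g=|Dg^{M_{c^-}}(g^{N_{c^-}}(c^-))|$. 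Eliminating $j$ (take logarithms) yields $|h(L)|\asymp|L|^{\,\ln\Lambda^-_g/\ln\Lambda^-_f}$, and symmetrically $|h(L')|\asymp|L'|^{\,\ln\Lambda^+_g/\ln\Lambda^+_f}$ for $L'$ hanging off the $c^+$ side. The hypothesis of the proposition is exactly $\ln\Lambda^+_g/\ln\Lambda^+_f=\ln\Lambda^-_g/\ln\Lambda^-_f=:1/\kappa_c$, so \emph{every} such piece obeys $|h(L)|\asymp|L|^{1/\kappa_c}$ with one and the same exponent; in particular $|h(A)|\asymp\ell^{1/\kappa_c}$, $|h(B)|\asymp b^{1/\kappa_c}$ (with $b=|B|\le|f^q(J^+)|\asymp\ell$), $|h([f^q(x),c])|\asymp s^{1/\kappa_c}$ (with $s=|[f^q(x),c]|\le\ell$ and $s+b=|f^q(J^+)|\asymp\ell$), while $|h(f^q(J^-))|\asymp|h(A)|\asymp\ell^{1/\kappa_c}$ since $f^q(J^-)$ is a definite fraction of $A$ (iterate $A$ to macroscopic scale, use bounded distortion of $g$ and uniform continuity of $h,h^{-1}$). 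Assembling, $|h(f^qJ^+)|=|h([f^q(x),c])|+|h(B)|\asymp s^{1/\kappa_c}+b^{1/\kappa_c}$, so
$$\frac{|h(J^+)|}{|h(J^-)|}\ \asymp\ \frac{|h(f^qJ^+)|}{|h(f^qJ^-)|}\ \asymp\ \frac{s^{1/\kappa_c}+b^{1/\kappa_c}}{\ell^{1/\kappa_c}}\ \asymp\ 1,$$
the upper bound from $s,b\lesssim\ell$ and the lower bound from $\max(s,b)\asymp\ell$. (When $f^q(x)\ge c$ the roles of $J^-$ and $J^+$ are interchanged.)

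The main obstacle is precisely this Type~I estimate: one must recognize that the matching hypothesis forces the local Hölder exponent of $h$ to be \emph{the same} on the two sides of $c$, which is exactly what upgrades the estimate from a one-sided inequality to genuine two-sided quasisymmetry — the mirror image of the obstruction built in Proposition~\ref{obs}. The rest is routine bookkeeping I would leave to the details: orientation-reversing branches merely permute the ``left''/``right'' labels, as everything is phrased through lengths; there are finitely many critical points, so all pre-periods $N_{c^\pm}$, periods $M_{c^\pm}$ and local derivative factors are absorbed into the implied constants; $h$ preserves the combinatorics of the critical orbits, whence $g$ has the same continuity pattern on $C$, its periodic orbits have the same periods, and its multipliers transform with a single exponent per Type~I point; and $d_1$ is chosen small enough that a short interval meets at most one point of $C$, that intervals staying $d_1$-close to a periodic or Misiurewicz orbit stay $d_0/2$-away from $C$, and that the distortion and uniform-continuity constants invoked are valid throughout.
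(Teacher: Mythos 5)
Your proposal is correct and takes essentially the same route as the paper's proof: iterate $[x-\delta,x+\delta]$ until it straddles a point of $C$, reduce across that passage by bounded distortion, split at the critical point, and for each ``one-sided'' piece hanging off $c^\pm$ iterate to macroscopic scale to get $\ln|h(S)|\asymp r\ln|S|$ with the \emph{same} exponent $r$ on both sides, the matching hypothesis being exactly what makes the two exponents coincide. The only cosmetic difference is that the paper passes first to $F=f^T$, $G=g^T$ for a common multiple $T$ of $p(f)$, $p(g)$, all $N_{c^\pm}$ and $M_{c^\pm}$, so that Type~I critical images become genuine fixed points and Type~II orbits avoid the critical set from time one on; you instead work with $f$ directly and absorb the pre-periods, periods and finitely many visits of critical orbits to $\hat C$ into the implied constants, which is equivalent but requires the extra care you flag in your last paragraph (in particular your ``never straddles a discontinuity'' claim is correct only because when the orbit of $c^\pm$ lands on some $c'^\pm\in\hat C$ it does so one-sidedly, so the short image has the critical point at an endpoint rather than in its interior). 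Your stopping time also adds the alternative ``$|f^q(J)|\ge d_1$'' to the paper's ``first hit of $C$,'' which is harmless and makes the easy case explicit.
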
 
\begin{proof}Let $T$ be a common multiple of  $p(f)$, $p(g)$, $N_{c^\pm}$  and  $M_{c^\pm}$ for every $c\in C$. Let $F(x)=f^T(x)$. Then  $F\in \mathcal{B}^k_{exp}(C_F)$ and $G\in \mathcal{B}^k_{exp}(h(C_F))$, for a  finite set  $C_F$, and for every $c\in C_F$
\begin{itemize}
\item[-]{\it Type I.} either $F^2(c^\pm)=F(c^\pm)$ and 
\begin{equation}\label{erre} \frac{\ln |DF (F(c^+))| }{\ln |DF (F(c^-))| } = \frac{\ln |DG (G(h(c^+)))| }{\ln |DG (G(h(c^-)))| },\end{equation} 
\item[-]{\it Type II.} or  $F^i$ is  continuous at $c$ for every $i\geq 0$ and   $F^i(c)\not\in C_F$ for every $i\geq 1$. 
\end{itemize} 
Furthermore $F\circ h = h\circ G$,
$$  d_F=\frac{1}{2} \inf_{c\in \hat{C}_F} \inf_{x\in \mathcal{O}^+(F,c)\setminus \hat{C}_F} dist(x,\hat{C}_F) > 0$$
and
$$  d_G=\frac{1}{2} \inf_{c\in \hat{C}_G} \inf_{x\in \mathcal{O}^+(G,c)\setminus \hat{C}_G} dist(x,\hat{C}_G) > 0.$$
Let $x\in I$ and $\delta > 0$ be such that $[x-\delta,x+\delta]\subset I$. We may assume $|\delta|< d_F$.  

Note that there is $\Cll{dist1} >0$ and $\Cll{distp1} > 0$  such that  for every interval $R\subset I$ and $n\in \mathbb{N}$  where $F^i(R)\cap  C_F=\emptyset$  for $i< n$ we have 
\begin{equation}\label{ddis1}   \frac{1}{ \Crr{dist}} \leq    \frac{DF^n(x)}{DF^n(y)}\leq \Crr{dist1}\end{equation} 

\begin{equation}\label{ddis21}  |\ln  |DF^n(x)|  - \ln |DF^n(y)| |\leq \Crr{distp1}|F^n(x)-F^n(y)|.\end{equation} 
for all $x,y\in R$. Moreover $G$ has analogous properties.  Let $q$ be the smallest integer satisfying $$F^q((x-\delta,x+\delta))\cap C_F\not=\emptyset.$$
We have
$$|F^i[x-\delta,x+\delta]|\leq (\min_{z\in \hat{I}} |DF(z)|)^{-i}$$
Define $J=F^q([x-\delta,x+\delta])$.
Let $c\in C_F$ be defined by 
$$\{c\}= F^q((x-\delta,x+\delta))\cap C_F.$$
Denote by $J^1$ and $J^2$ the right and left connected components of $F^q([x-\delta,x+\delta])\setminus \{c\}$. Due 
(\ref{ddis1})  there is $\Cll{ui} > 1$ such that

$$\frac{1}{\Crr{ui}}\leq  \frac{|J^1|}{|J^2|} \leq \Crr{ui}.$$
Fix $y\in [x-\delta,x+\delta]$ such that $F^q(y)=c$.  It follows from (\ref{ddis1})  that  $$ \frac{1}{|DF^q(y)|} \leq 2\Crr{dist} \frac{|h|}{|J|}.$$
Define $c_1=c^+$ and  $c_2=c^-$. 

Given an interval $S=[z_1,z_2]\subset I$ such that $z_k$ is between  $z_{3-k}$ and $c_k$, and 
$$dist(S,c_k)\leq  \Crr{dist1} |S|,$$
let  $q(S)$  be  the smallest integer satisfying 
$$    |DF^{q(S)}(c_k)||S| > \frac{d_F}{ \Crr{dist1}(\Crr{dist1}+1)}.$$
This implies
\begin{equation}\label{order1} \ln |DF^{q(S)}(c_k)| + \ln |S| =O(1).\end{equation} 
Moreover  $F^{q(S)}$ is a diffeomorphism on $[z_{3-k},c_k]$ and 
$$|F^{q(S)}S|\geq \frac{d_F}{ \Crr{dist1}^2(\Crr{dist1}+1)},$$
that implies that $G^{q(S)}$ is a diffeomorphism on $h([z_{3-k},c_k])$ and  there is $\Cll{lowerb}> 0$ such that 
$$|G^{q(S)}h(S)|>  \Crr{lowerb}$$
and
$$    |DG^{q(S)}(c_k)||h(S)| >  \Crr{lowerb},$$
so
\begin{equation}\label{order2}  \ln |DG^{q(S)}(c_k)| + \ln |h(S)| =O(1).\end{equation} 
 Note  that 
\begin{equation}\label{con}  |F^i(S)|\leq ( \max_{z\in \hat{I}}  |DF(z)|)^{-i}. \end{equation} 
for every $i\leq q(S)$.

Let $J=[y_1,y_2]$. If $Q_1$ and $Q_2$ are  the right and left   connected components of $J\setminus \{F^q(x)\}$  then
\begin{equation}\label{lll} \frac{1}{\Crr{dist1}}   \leq  \frac{|Q_1|}{|Q_2|}\leq \Crr{dist1},\end{equation} 
so
$$\frac{1}{1+\Crr{dist1}}\leq \frac{|Q_i|}{|J|}\leq  \frac{\Crr{dist1}}{1+\Crr{dist1}}.$$
Suppose  $c\in Q_1$ (the case $c\in Q_2$ is analogous).  Then $J^1\subset Q_1$  and $Q_2\subset J^2$ 
and
$$dist(z_2,Q_2)\leq \Crr{dist1}|Q_2|,$$

We consider two cases. \\

\noindent {\it First case. $c$ is a type I point.}  Then  (\ref{order1}) and (\ref{order2}) imply 
$$  q(S)= - \frac{\ln |S|}{\ln  |DF(F(c_k))|} + O(1)= - \frac{\ln |h(S)|}{\ln  |DG(G(c_k))|} + O(1) .$$
%$$  |DF(c_k)| |DF(F(c_k))|^{q_k-1}|J^k| > \frac{d}{ \Crr{dist}}\geq   |DF(c_k)| |DF(F(c_k))|^{q_k-2}|J^k|.$$
so there is a $\Cll{outra}$ such that 
$$  \frac{1}{\Crr{outra}} |S|^{\frac{\ln  |DG(G(c_k))|}{\ln  |DF(F(c_k))|}}  \leq    |h(S)|\leq   \Crr{outra} |S|^{\frac{\ln  |DG(G(c_k))|}{\ln  |DF(F(c_k))|}}.$$
By (\ref{erre}) we can define
$$r=\frac{\ln  |DG(G(c^+))|}{\ln  |DF(F(c^+))|}=\frac{\ln  |DG(G(c^-))|}{\ln  |DF(F(c^-))|},$$
and consequently 
\begin{equation} \label{p3}  \frac{1}{\Crr{outra}} |S|^{r}  \leq    |h(S)|\leq   \Crr{outra} |S|^{r}\end{equation} 
for every interval $S$ satisfying the conditions we imposed on $S$. 

So  (\ref{p3}) implies 
 \begin{align*}  \frac{1}{   \Crr{outra}  } |Q_2|^r \leq  |h(Q_2)|\leq   \Crr{outra}  |Q_2|^r.  \end{align*} 
 Since $Q_1= [y_1,c] \cup [c,F^q(x)]$ by  (\ref{p3}) we have
\begin{align*}    |h(Q_1)|&= |h(J^1)|+ |h([c,F^q(x)])| \\
&\leq  \Crr{outra} (|J^1|^r +  |[c,F^q(x)]|^r) \leq 2\Crr{outra}  \max \{ |J^1|,  |[c,F^q(x)]|\}^r\\
&\leq 2\Crr{outra}|Q^1|^r.
\end{align*} 
and
\begin{align*}    |h(Q_1)|&= |h(J^1)|+ |h([c,F^q(x)])| \\
&\geq \frac{1}{ \Crr{outra}} (|J^1|^r +  |[c,F^q(x)]|^r) \geq \frac{1}{ \Crr{outra}}  \max \{ |J^1|,  |[c,F^q(x)]|\}^r\\
&\leq  \frac{1}{ 2^r\Crr{outra}} |Q^1|^r.
\end{align*} 
so there is $\Cll{qs}\geq  1$ such that 
\begin{equation}\label{c1} \frac{1}{\Crr{qs}}\leq \frac{|h(Q_1)|}{|h(Q_2)|}\leq \Crr{qs},\end{equation}
and the bounded distortion of $G$ implies
\begin{equation}\label{c2} \frac{1}{ \Crr{dist1} \Crr{qs}}\leq \frac{|h([x,x+h])|}{|h([x-h,x])|}\leq \Crr{dist1} \Crr{qs}.\end{equation} 
\ \\

\noindent {\it Second case. $c$ is a type II point.}    In this case
$$\ln |DF^{q}(c^+)|-  \ln |DF^{q}(c^-)|=O(1)$$
and
$$\ln |DG^{q}(c^+)|-  \ln |DG^{q}(c^-)|=O(1)$$
for every $q$.   So
$$\ln |DF^{q(S)}(c_1)|+ \ln |S| = O(1),$$
$$\ln |DG^{q(S)}(c_1)|+ \ln |h(S)| = O(1).$$
For $S=Q_2$ we obtain
\begin{align} \label{lll3} &\ln |DF^{q(Q_2)}(c_1)|+ \ln |Q_2| = O(1),\\
&\ln |DG^{q(Q_2)}(c_1)|+ \ln |h(Q_2)| = O(1).\nonumber \end{align}
Note if $\tilde{S}=[s_1,c]\cup [c,s_2]$ then  $2|[s_i,c]|\geq |\tilde{S}|$, for some $i\in \{1,2\}$  we have  $$\ln |\tilde{S}|= \ln |[s_{i},c]|+ O(1)$$
$$\ln |\tilde{S}|>  \ln |[s_{3-i},c]| + \ln 2,$$
so
$$\ln |DF^{q([s_i,c])}(c_1)|+ \ln |\tilde{S}| = O(1),$$
and there is $\Cll{lowerr}$ such that 
$$\ln |DF^{q([s_{3-i},c])}(c_1)|+ \ln |\tilde{S}| >   \Crr{lowerr}.$$
Since $F$ is uniformly expanding we conclude that 
$$\ln |\tilde{S}| + \ln  |DF^{\min \{ q([s_i,c]),q([s_{3-i},c])\}  }(c_1)| = O(1).$$
We can use a similar argument with $G$ and $h(\tilde{S})$ and obtain
$$\ln |h(\tilde{S})| + \ln  |DG^{\min \{ q([s_i,c]),q([s_{3-i},c])\}  }(c_1)| = O(1).$$
Take $\tilde{S}=Q_1$. Then
\begin{equation}\label{lll2} \ln |Q_1| + \ln  |DF^{\min \{ q(J^1),q([c,F^q(x))\}  }(c_1)| = O(1).\end{equation} 
We can use a similar argument with $G$ and $h(\tilde{S})$ and obtain
\begin{equation}\label{lll4} \ln |h(Q_1)| + \ln  |DG^{\min \{ q(J^1),q([c,F^q(x))\}  }(c_1)| = O(1).\end{equation}
Since (\ref{lll}), (\ref{lll2}) and (\ref{lll3})  imply
$$\ln |DG^{q(Q_2)}(c_1)| - \ln  |DF^{\min \{ q(J^1),q([c,F^q(x))\}  }(c_1)|=O(1)$$
so the uniform expansion of $F$ gives us
$$q(Q_2)- \min \{ q(J^1),q([c,F^q(x))\}  =O(1),$$
and finally (\ref{lll3}) and (\ref{lll4}) imply (\ref{c1}) and (\ref{c2}). This completes the proof.
\end{proof}

\begin{theorem}[Quasisymmetric deformations]\label{quasi3}  Let $f_0 \in  \mathcal{B}^k_{exp}(C)$ be a piecewise expanding map satisfying Assumption FOorMC. Let $f_t\in \mathcal{B}^k_{exp}(C)$ be a smooth family, $t\in (c,d)$. The following statements are equivalent
\begin{itemize}
\item[A.] For every $t$ there is a quasisymmetric map $h_t$ such that $h_t(c)=c$ for every $c\in C$ and $ f_t\circ h_t  = h_t\circ f_0$. 
\item[B.] For every Type I critical point $c\in C\setminus \partial I$ and every $t$ we have 
 \begin{align*} 
\label{no} &\frac{1}{\ln |Df_t^{M_{c^+}} (f^{N_{c^+}}(c^+))| } \sum_{i=N_{c^+}}^{N_{c^+} + M_{c^+}-1}  \phi_t(f_t^i(c^+)) \numberthis\\
&= \frac{1}{\ln |Df_t^{M_{c^-}} (f^{N_{c^-}}(c^-))| } \sum_{i=N_{c^-}}^{N_{c^-} + M_{c^-}-1}  \phi_t(f_t^i(c^-)),\end{align*} 
where
$$\phi_t = \frac{Dv_t + D^2f_t\cdot \alpha_t}{Df_t},$$
$v_t=\partial_t f_t$ and $\alpha_t$ is the unique continuous solution of 
\begin{equation}\label{tcer} v_t =\alpha_t\circ f_t - Df_t\circ \alpha_t.\end{equation} 
Moreover the family $f_t$ is Lasota-Yorke stable. 
\item[C.]  For every $t_0$ and $x, x+\delta, x-\delta\in I$ we have that 
\begin{equation} \label{qs} \frac{|h_{t}\circ h_{t_0}^{-1}(x+\delta)-h_{t}\circ h_{t_0}^{-1}(x) |}{|h_{t}\circ h_{t_0}^{-1}(x-\delta)-h_{t}\circ h_{t_0}^{-1}(x) |}\leq (1+O(|t-t_0|) \end{equation} 
\end{itemize} 
\end{theorem}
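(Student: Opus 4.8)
The plan is to prove the cycle $A\implies B\implies C\implies A$, drawing on the machinery already developed: Theorem \ref{infc} (continuous log-Lipschitz solutions $\alpha_t$ and the Birkhoff-sum representation of $\alpha_t$), Theorem \ref{char} (characterization of deformations via Lasota–Yorke stability and the vanishing of the $J$-functionals), and Propositions \ref{obs} and \ref{quasi2} (the multiplier-ratio obstruction and its sufficiency for quasisymmetry under Assumption FOorMC). The key translation to establish up front is the infinitesimal version of the multiplier identity appearing in Propositions \ref{obs}–\ref{quasi2}: if $p=f_0^{N}(c^\pm)$ is a periodic point of $f_0$ with period $M$, and $p_t$ denotes its smooth continuation under the deformation $f_t$ (which exists because, by Assumption FOorMC Type I, $p$ is a hyperbolic periodic point and persists), then by the computation in Section \ref{lya} together with the remark following Proposition \ref{flex},
$$\partial_t \ln |Df_t^{M}(p_t)|\big|_{t=0}=\sum_{i=N}^{N+M-1}\phi_t(f_t^i(c^\pm)),$$
where $\phi_t=(Dv_t+D^2f_t\,\alpha_t)/Df_t$ and $\alpha_t$ is the continuous solution of \eqref{tcer} supplied by Theorem \ref{infc}. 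Differentiating the ratio
$$\frac{\ln |Df_t^{M_{c^+}}(f_t^{N_{c^+}}(c^+))|}{\ln |Df_t^{M_{c^-}}(f_t^{N_{c^-}}(c^-))|}$$
in $t$ and using the quotient rule shows that this ratio is locally constant in $t$ if and only if \eqref{no} holds for all $t$.

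For $A\implies B$: from the existence of conjugacies $h_t$ (quasisymmetric, hence in particular homeomorphisms) I first get that $f_t$ is a deformation of $f_0$, so by Theorem \ref{char}($B\implies D$) the family is Lasota–Yorke stable and the critical relations are $t$-independent; in particular every Type I periodic critical orbit persists with its combinatorics. Since $h_t$ is quasisymmetric and maps the relevant periodic points correctly, Proposition \ref{obs} (contrapositive) forces the multiplier ratio to be independent of $t$ — more precisely, comparing $f_0$ and $f_t$ via $h_t$ gives equality of the two ratios for every $t$. By the infinitesimal reformulation above, this is exactly \eqref{no}. For $B\implies C$: Lasota–Yorke stability plus the continuity of $\alpha_t$ (Theorem \ref{infc}) and the fact that the $J$-functionals vanish — which is forced by the existence of the continuous solution $\alpha_t$ of \eqref{tcer}, via Theorem \ref{infc}($C\implies A$) — put us in the setting of Theorem \ref{char}($D\implies E$), so $f_t$ is a deformation with log-Lipschitz $\alpha_t$ and conjugacies $h_t$ solving \eqref{edo3}. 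One then shows these $h_t$ are quasisymmetric: for fixed $t_0$, apply Proposition \ref{quasi2} to the pair $f_{t_0}$ and $f_t$ with conjugacy $h_t\circ h_{t_0}^{-1}$, whose hypothesis (equality of multiplier ratios at Type I points) holds because integrating \eqref{no} from $t_0$ to $t$ gives equality of the ratios. The quantitative $1+O(|t-t_0|)$ bound in \eqref{qs} then follows by tracking constants in the proof of Proposition \ref{quasi2}: the quasisymmetry constant depends on the distortion constant and on the exponent $r=\ln|DG(G(c_k))|/\ln|DF(F(c_k))|$, and since \eqref{no} makes $\partial_t$ of the log-multipliers finite and uniformly bounded on compact $t$-intervals (using Lasota–Yorke stability and the uniform log-Lipschitz bounds from Theorem \ref{infc}), we get $r=1+O(|t-t_0|)$, which propagates to \eqref{qs}. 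Finally $C\implies A$ is immediate: \eqref{qs} with the triangle/symmetry manipulations says precisely that $h_t\circ h_{t_0}^{-1}$ is quasisymmetric with constant $1+O(|t-t_0|)$, and taking $t_0=0$ (recalling $h_0=\mathrm{id}$) gives the quasisymmetric conjugacy $h_t$ with $h_t(c)=c$.

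The main obstacle I anticipate is the passage from the infinitesimal/pointwise identity \eqref{no} to a \emph{uniform-in-$t$} control that yields the explicit $1+O(|t-t_0|)$ modulus in \eqref{qs}, rather than merely qualitative quasisymmetry. This requires: (i) checking that the smooth continuations $p_t$ of the Type I periodic critical orbits, and the functions $\phi_t$ and $\alpha_t$, depend on $t$ in a controlled way — here Lasota–Yorke stability and the last sentence of Theorem \ref{infc} (constants depending only on the Lasota–Yorke data, $p(f)$, $|f|_2$, $|v|_1$) are the crucial inputs; and (ii) re-examining the proof of Proposition \ref{quasi2} to extract the dependence of $\Crr{qs}$ (and of $\Crr{outra}$) on $|r-1|$, so that as $t\to t_0$ one recovers the sharp asymptotics. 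The Type II critical points cause no difficulty, since there the multiplier-ratio condition is automatically satisfied (as in the second case of the proof of Proposition \ref{quasi2}) and contributes nothing to the $t$-dependence. A secondary technical point is handling the non-uniqueness of the Birkhoff-sum representation of $\alpha_t$ (cf.\ the Corollary after Theorem \ref{infc}), but since \eqref{no} is phrased directly in terms of $\phi_t=(Dv_t+D^2f_t\alpha_t)/Df_t$ with $\alpha_t$ the \emph{unique} continuous solution, this ambiguity does not actually enter.
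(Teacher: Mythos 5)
Your $A\implies B$ argument (multiplier-ratio invariance under the quasisymmetric conjugacy via Proposition \ref{obs}, then differentiating in $t$ using the computation of Section \ref{lya}) matches the paper's proof, and $C\implies A$ is indeed trivial. The genuine gap is in $B\implies C$. You propose to apply Proposition \ref{quasi2} to the pair $(f_{t_0},f_t)$ with conjugacy $h_t\circ h_{t_0}^{-1}$ and then ``track constants'' to extract the $1+O(|t-t_0|)$ asymptotics, on the grounds that the quasisymmetry constant is controlled by $|r-1|$. But look at the final bound \eqref{c2} in that proof: the quasisymmetry constant is the product of a bounded-distortion constant for the iterated map $F=f^T$ with the constant coming from the exponent comparison. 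The distortion constant is a fixed quantity determined by $T$-step distortion of $f_{t_0}$ on partition intervals; it stays bounded away from $1$ uniformly in $t$ and does not degenerate to $1$ as $t\to t_0$. So Proposition \ref{quasi2} can only deliver qualitative quasisymmetry with some fixed constant, never the sharp modulus demanded by \eqref{qs}.

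The paper's $B\implies C$ goes through a mechanism that your plan never invokes: the \emph{Zygmund} regularity of $\alpha_t$. Starting from the Birkhoff-sum representation of $\alpha_t$ given by Theorem \ref{tt}, one applies Theorem \ref{sum-vvv} of the companion paper to upgrade the log-Lipschitz control on $\alpha_t$ to a uniform Zygmund bound $|\alpha_t(x+\delta)+\alpha_t(x-\delta)-2\alpha_t(x)|\le C|\delta|$ for $t$ in compacts. Since $h_t$ is the time-$t$ map of the nonautonomous flow $\dot{h}_t=\alpha_t\circ h_t$, Reimann's result (\cite[Proposition 8]{reimann}: the time-$t$ map of a flow generated by a uniformly Zygmund field is $e^{Ct}$-quasisymmetric) yields \eqref{qs} at $t_0=0$, and the case of general $t_0$ follows from the same reparametrization $\tilde f_s=f_{t_0+s}$ already used in the proof of Theorem \ref{char}. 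This is precisely the Zygmund row of Table~1, and it is the crucial ingredient missing from your plan; without passing through the Zygmund regularity of $\alpha_t$ there is no route to the quantitative bound in \eqref{qs}.
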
 
\begin{proof} Of course $C\implies A$. \\
\noindent {\it $A\implies B$.}  Since $f_t\in \mathcal{T}(f_0)$ for every $t$,  Theorem \ref{char} implies
that (\ref{tcer})  as a unique continuous solution $\alpha_t$. Proposition \ref{obs} and $A$ implies
$$\frac{\ln |Df_t^{M_{c^+}}(h_t(f_0^{N_{c^+}}(c^+)))|}{\ln |Df_t^{M_{c^-}}(h_t(f_0^{N_{c^-}}(c^-)))|} =\frac{\ln |Df_0^{M_{c^+}}(f_0^{N_{c^+}}(c^+))|}{\ln |Df_0^{M_{c^-}}(f_0^{N_{c^-}}(c^-))|} $$
for every $t$ and Type II critical point $c$.  Deriving with respect to $t$ we obtain
\begin{align*}&\frac{1}{ \ln |Df_t^{M_{c^+}}(h_t(f_0^{N_{c^+}}(c^+)))|} \partial_t \ln |Df_t^{M_{c^+}}(h_t(f_0^{N_{c^+}}(c^+)))| \\&= \frac{1}{\ln |Df_t^{M_{c^-}}(h_t(f_0^{N_{c^-}}(c^-)))|} \partial_t \ln |Df_t^{M_{c^-}}(h_t(f_0^{N_{c^-}}(c^-)))|.\end{align*} 
Since for every $M$ and $N$  and $c\in C\setminus \partial I$ 
\begin{align*} &\partial_t \ln |Df_t^M(h_t(f_0^N(c^\pm)))=\partial_t \Big(  \sum_{i=0}^{M-1}  \ln |Df_t(h_t(f_0^{N+i}(c^\pm)))|  \Big)\\
&= \sum_{i=0}^{M-1} \frac{ \ln |v_t(h_t(f_0^{N+i}(c^\pm)))+Df^2_t(h_t(f_0^{N+i}(c^\pm)))\cdot \alpha_t(h_t(f_0^{N+i}(c^\pm)))}{Df_t(h_t(f_0^{N+i}(c^\pm)))}\\
&=  \sum_{i=0}^{M-1} \phi_t(h_t(f_0^{N+i}(c^\pm)))= \sum_{i=N}^{N+M-1} \phi_t(f_t^{i}(c^\pm)),\end{align*} 
we have that (\ref{no}) holds. The family $f_t$ is Lasota-Yorke stable due Theorem \ref{char}.\\
\noindent {\it $B\implies C$.}  $B$ and Theorem \ref{char} imply that  for every compact interval $K$ we have that $\alpha_t$ are uniformly Log-Lipschitz for $t\in K$. In particular $\alpha_t$ are uniformly $\beta$-H\"older for every $\beta\in (0,1)$,  for $t\in K$. In particular $\phi_t$ are uniformly piecewise $\beta$-H\"older for every $\beta\in (0,1)$,  for $t\in K$. Theorem \ref{tt} implies that 
$$\alpha_t(x)= H_t(x)+ G_t(x)+ \int1_{[a,x]}  \Big( \sum_{n=0}^{\infty}  \sum_{i=0}^{p-1}  \phi_t\circ f^{np+i} \Big) dm,$$
where $H_t$ and $G_t$ are uniformly Lipschitz functions for $t\in K$. G.R. and S. \cite[ Theorem \ref{sum-vvv}]{um} implies that there is $\Cll{uu}$ such that 
$$|\alpha_t(x+\delta)+\alpha_t(x-\delta)-2\alpha_t(x)|\leq \Crr{uu} |\delta|$$
for every $t\in K$, $x,x+\delta,x-\delta\in I$. Since due Theorem \ref{char} we have that $h_t$ are the solutions of the differential equations $\dot{h}_t =\alpha_t\circ h_t$  with initial condition $h_0(x)=x$, we have by Reimann \cite[Proposition 8]{reimann} that $h_t$ is $e^{ \Crr{uu} |t|}$-quasisymmetric and (\ref{qs}) holds for $t_0=0$. The case for general $t_0$ follows for an  argument similar to those used in the the proof of Theorem \ref{char}. 
\end{proof} 

Let $f\in   \mathcal{B}^k_{exp}(C)$ satisfying assumption FOorMC. Define 

$$\Omega_f= \{ \mathcal{O}^+(c^{\pm}), \  \text {$c$ is a Type I critical point}   \}.$$

Let $D_f$ be the dimension of the linear space
$$\{  f\colon \Omega \rightarrow  \mathbb{R}\colon  \  f(\mathcal{O}^+(c^+))= f(\mathcal{O}^+(c^-)), \  \text {for all $c\in C$ that  is a Type I critical point}     \}. $$

Note that $D_f$ is a topological invariant.

\begin{theorem}[Lamination by quasisymmetric classes] \label{lami} Let $f \in  \mathcal{B}^k_{exp}(C)$ be a map satisfying assumption FOorMC. Then 
\begin{itemize}
\item[A.] Given $g_0$ in the topological class $\mathcal{T}$ of $f$ the  quasisymmetric class of $g$ is  an embedded submanifold $M_g$ of codimension $D_f$ in  $\mathcal{T}$. 
\item[B.] Moreover $v \in  \mathcal{B}^k(C)$ belongs to the tangent space of $M_g$ at $g$ if and only if 
 \begin{align*} 
&\frac{1}{\ln |Dg^{M_{c^+}} (g^{N_{c^+}}(c^+))| } \sum_{i=N_{c^+}}^{N_{c^+} + M_{c^+}-1}  \phi(g^i(c^+))\\
&= \frac{1}{\ln |Dg^{M_{c^-}} (g^{N_{c^-}}(c^-))| } \sum_{i=N_{c^-}}^{N_{c^-} + M_{c^-}-1}  \phi(g^i(c^-)) \end{align*} 
for every $c\in C$ of Type I and $\mathcal{O}^+(f,f^{N_{c^+}}(c^+))\neq \mathcal{O}^+(f,f^{N_{c^-}}(c^-))$. Here
$$\phi=-\frac{Dv+D^2f\ \alpha}{Dg}$$
and $\alpha$ is the only continuous solution of $v = \alpha\circ g - Dg \ \alpha$.
\end{itemize}
\end{theorem}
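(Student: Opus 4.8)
The plan is to present $M_g$, near a given $g\in\mathcal{T}:=\mathcal{T}(f)$, as the zero set of a $C^{\tilde r}$ submersion on the Banach manifold $\mathcal{T}$ furnished by Theorem \ref{top}. Assumption FOorMC, via Lemma \ref{stable}, makes $f$ — and, since FOorMC is a topological invariant, every map in $\mathcal{T}$ — locally Lasota--Yorke stable, so Theorem \ref{top} applies and Propositions \ref{obs} and \ref{quasi2} may be invoked with any base map in $\mathcal{T}$. Enumerate the Type~I critical points $c_1,\dots,c_s\in C\setminus\partial I$, set $p_i^{\pm}=f^{N_{c_i^{\pm}}}(c_i^{\pm})$ (periodic, of period dividing $M_{c_i^{\pm}}$), and for $g'$ in a small neighbourhood $\mathcal{W}$ of $g$ in $\mathcal{B}^k_{exp}(C)$ let $h_{g'}\colon I\to I$ be the unique orientation preserving conjugacy with $h_{g'}\circ g=g'\circ h_{g'}$ and $h_{g'}|_C=\mathrm{id}$ (it exists because $g'\in\mathcal{T}$ and is unique since a topological conjugacy of expanding maps is determined on the dense set of periodic points). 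The continuation $q_i^{\pm}(g')=h_{g'}(p_i^{\pm})$ of the periodic point depends $C^{k-1}$ on $g'$ by the implicit function theorem, so
$$\mu_i^{\pm}(g')=\ln\bigl|D(g')^{M_{c_i^{\pm}}}\!\bigl(q_i^{\pm}(g')\bigr)\bigr|$$
is a $C^{k-1}$ — hence $C^{\tilde r}$, as $\tilde r=[k/2]\le k-1$ — function on $\mathcal{T}\cap\mathcal{W}$.

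First I would identify $M_g\cap\mathcal{W}$ as a level set in $\mathcal{T}$. As $M_g$ is the quasisymmetric equivalence class, $g'\in M_g$ iff the unique conjugacy $h_{g'}$ is quasisymmetric; by Proposition \ref{obs} this forces $\mu_i^{+}(g')/\mu_i^{-}(g')=\mu_i^{+}(g)/\mu_i^{-}(g)$ for every Type~I critical point, and by Proposition \ref{quasi2} these equalities for all Type~I critical points conversely make $h_{g'}$ quasisymmetric (Type~II critical points impose no condition). When the periodic orbits of $p_i^{+}$ and $p_i^{-}$ agree the ratio equals the topological invariant $M_{c_i^{+}}/M_{c_i^{-}}$ and the equation is vacuous, so the effective equations are indexed by the set $\mathcal{E}$ of Type~I critical points with $\mathcal{O}^+(p_i^{+})\neq\mathcal{O}^+(p_i^{-})$, and $M_g\cap\mathcal{W}=\Xi^{-1}(0)$ for the $C^{\tilde r}$ map $\Xi\colon\mathcal{T}\cap\mathcal{W}\to\mathbb{R}^{\mathcal{E}}$ with components $\mu_i^{+}(\cdot)/\mu_i^{+}(g)-\mu_i^{-}(\cdot)/\mu_i^{-}(g)$.

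Next I would differentiate. For a $C^1$ curve $g_t\subset\mathcal{T}$ with $g_0=g$ and $\partial_tg_t|_{t=0}=v$, Theorem \ref{char} gives the unique continuous $\alpha$ with $v=\alpha\circ g-Dg\cdot\alpha$, $\alpha|_C=0$ — so by Theorem \ref{infc} the tangent space $T_g\mathcal{T}$ is $\bigcap_{c\in\hat C}\ker J(g,c,\cdot)$ — and the computation in the proof of Theorem \ref{quasi3}($A\Rightarrow B$) yields
$$\partial_t\,\mu_i^{\pm}(g_t)\big|_{t=0}=\sum_{j=0}^{M_{c_i^{\pm}}-1}\phi\bigl(g^{j}(p_i^{\pm})\bigr),\qquad\phi=\frac{Dv+D^2g\cdot\alpha}{Dg},$$
which up to the factor relating $M_{c_i^{\pm}}$ to the exact period is $\Psi_{\mathcal{O}(p_i^{\pm}),1}(v)$ from Proposition \ref{flex}. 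Thus $d\Xi_g(v)=0$ is exactly the system in part~B (where the requirement that $\alpha$ be the unique continuous solution already builds in $v\in T_g\mathcal{T}$). By Proposition \ref{flex} the functionals $\Psi_{\mathcal{O}(p),1}$ over the distinct periodic orbits among the $p_i^{\pm}$, together with $\{J(g,c,\cdot)\}_{c\in\hat C}$, are linearly independent, and the surjectivity established inside that proof shows $v\mapsto(\Psi_{\mathcal{O}(p),1}(v))_p$ is onto already on $T_g\mathcal{T}$. Hence the components of $d\Xi_g$ span, on $T_g\mathcal{T}$, a space of dimension $D_f$ (this last equality being the linear-algebra content of the definition of $D_f$); replacing $\Xi$ if necessary by $D_f$ of its components — the rest redundant by transitivity of the equalities — we obtain, on a possibly smaller $\mathcal{W}$ (surjectivity of the derivative is an open condition), a $C^{\tilde r}$ submersion $\mathcal{T}\cap\mathcal{W}\to\mathbb{R}^{D_f}$ with zero set $M_g\cap\mathcal{W}$. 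The submersion theorem then gives part~A, with $T_gM_g=\ker d\Xi_g$ the space described in part~B.

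I expect the principal obstacle to be the rank statement for $d\Xi_g$: it combines the identification $T_g\mathcal{T}=\bigcap_{c\in\hat C}\ker J(g,c,\cdot)$ (Theorems \ref{infc}, \ref{char}), the sharper output of Proposition \ref{flex} that $v\mapsto(\Psi_{\mathcal{O}(p),1}(v))_p$ is surjective already on that kernel (not merely that the $\Psi$'s are independent on $\mathcal{B}^{\ell}(C)$), and the bookkeeping identity matching the dimension of the span of the difference functionals with $D_f$. Secondary care is needed for the set-theoretic identity $M_g\cap\mathcal{W}=\{g':h_{g'}\text{ quasisymmetric}\}$ — which uses uniqueness of topological conjugacies together with Propositions \ref{obs} and \ref{quasi2} — and for the regularity matching ($\mathcal{T}$ only $C^{[k/2]}$, the multiplier functions $C^{k-1}$; harmless as $[k/2]\le k-1$ for $k\ge 3$).
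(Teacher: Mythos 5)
Your argument follows the same route as the paper: Theorem \ref{top} provides the Banach manifold $\mathcal{T}$, Propositions \ref{obs} and \ref{quasi2} identify the quasisymmetric class as the locus where the logarithmic multiplier ratios at the type-I side orbits are held fixed, the derivative of each multiplier is the Birkhoff sum of $\phi$ from Theorem \ref{quasi3}, and Proposition \ref{flex} gives surjectivity of the linearisation so that the Submersion Theorem applies. The only cosmetic difference is that the paper passes to $\ln\ln|Dg^{n}(\cdot)|$ so that $M_{g}$ becomes the preimage of an \emph{affine} subspace $S\subset\mathbb{R}^{\Omega_{g_0}}$ of a single $C^{[k/2]}$ map $Q$, whereas you keep the nonlinear ratio expression and normalise; at $g$ the two linearisations coincide, so this does not change the dimension count.
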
 

\begin{proof} By Theorem \ref{top} we have that in a neighborhood $U$ to $f$ its topological class  $\mathcal{T}\cap U$ is a $C^{[k/2]}$ Banach manifold with codimension  $2n-2$ modelled over the Banach space  $E^h(f)$. Given $g_0 \in \mathcal{T}\cap U$ let $M_{g_0}$ be the set of all $g\in \mathcal{T}\cap U$ such that 
$$\frac{\ln |Dg^{M_{c^+}}(g^{N_{c^+}}(c^+))|}{\ln |Dg^{M_{c^-}}(g^{N_{c^-}}(c^-))|} = \frac{\ln |Dg_0^{M_{c^+}}(g_0^{N_{c^+}}(c^+))|}{\ln |Dg_0^{M_{c^-}}(g_0^{N_{c^-}}(c^-))|}$$
for every Type I critical point  $c\in C\setminus \partial I$. Let $e^{Y_c}$ be the right hand side of this expression. By Theorem \ref{obs} and Proposition \ref{quasi2} we have that $g\in \mathcal{T}\cap U$ is conjugate to $g_0$ by a quasisymmetric map if and only if $g\in M_{g_0}$. 
Consider the function 
$$Q\colon  \mathcal{T}\cap U  \rightarrow \mathbb{R}^{\Omega_{g_0}}$$
defined by 
$$Q(g)(\mathcal{O}^+_{g_0}(x))= \ln \ln |Dg^n(h_g(x))|,$$
where $\mathcal{O}^+_{g_0}(x) \in \Omega_{g_0}$ is an $g_0$-orbit with period $n$. 
Here $h_g$ is the unique homeomorphism such that $h_g \circ g_0 = g \circ h_g$.   Note that  $Q$ is a $C^{[k/2]}$ function.  Consider the affine  subspace $S\subset  \mathbb{R}^{\Omega_{g_0}}$ given by
$$\{ (s_{\mathcal{O}^+_{g_0}(x)})_{\mathcal{O}^+_{g_0}(x)\in \Omega_{g_0} }\colon   s_{\mathcal{O}^+_{g_0}(g^{N_{c^+}}(c^+))}- s_{\mathcal{O}^+_{g_0}(g_0^{N_{c^-}}(c^-))}=Y_c \text{ for all $c$ that is  Type I} \}.$$
Note that the tangent space of $S$ is
$$\{ (s_{\mathcal{O}^+_{g_0}(x)})_{\mathcal{O}^+_{g_0}(x)\in \Omega_{g_0} }\colon   s_{\mathcal{O}^+_{g_0}(g^{N_{c^+}}(c^+))}= s_{\mathcal{O}^+_{g_0}(g_0^{N_{c^-}}(c^-))} \text{ for all $c$ that is  Type I} \},$$
which has dimension $D_f$. We have
 $$M_{g_0}=Q^{-1}S.$$
We will apply Submersion Theorem  to  prove that $M_{g_0}$ is a submanifold of codimension $D_f$. If we derive $Q$ at $g$ in the direction $E^h_{g}$ we obtain
$$D_gQ\cdot v = \Big( \frac{\sum_{i=0}^{p(x)-1}  \phi(g^i(h_g(x)))}{\ln |Dg^{p(x)} (h_g(x))| }   \Big)_{\mathcal{O}^+_{g_0}(x)\in \Omega_{g_0}},$$
where $p(x)$ is the period of the $g_0$-orbit of $x$ and 
$$\phi =\frac{Dv+D^2g_0 \ \alpha}{Dg_0},$$
and $\alpha$ is the solution of the equation $v=\alpha\circ g_0 - Dg_0\circ \alpha$.
We need to show that the image of $$D_{g_0}Q\colon T_{g_0}\mathcal{T}\rightarrow \mathbb{R}^{\Omega_{g_0}}$$ is  $\mathbb{R}^{\Omega_{g_0}}$. This follows from Proposition \ref{flex}. The description of the tangent space of $M_{g_0}$ follows from Theorem \ref{quasi3}.
\end{proof}

\section{Relation with  partially hyperbolic framework and it nightmares} 

We can interpret most  of the results wherein the framework  of $2$-dimensional piecewise smooth partially hyperbolic endomorphisms. 

\begin{proposition} \label{ph}Let $f_t \in \mathcal{B}_{exp}(C)$, $t\in [0,1]$, be a smooth family, with $t\in [0,1]$. Define
$$F\colon I  \times [0,1]\rightarrow  I  \times [0,1].$$
as $F(x,t)=(f_t(x),t)$. Then $F$ is a partially hyperbolic piecewise smooth endomorphism in the following sense
\begin{itemize}
\item[A.] The unstable manifolds are horizontal lines. More precisely 
$$W^u(x,t)=I\times \{t\}, \ E^u_{(x,t)}=\mathbb{R}\times \{0\},$$
$W^u(x,t)$ is  invariant and there is $\theta > 1$ such that 
$$|D_x F(x,t)|=|Df_t(x)|\geq \theta$$
for all  $(x,t)$ where $D f_t(x)$ is defined. 
\item[B.] There exists a measurable subset $S$ such  that
$$S^c\cap (I\times\{t\})$$
is countable for every $t$, and a continuous and bounded function
$$\hat{\alpha}\colon S\rightarrow  \mathbb{R}^2$$ 
such that  $E^c_{(x,t)}=< \hat{\alpha}>$. Indeed
$$D F(x,t)\cdot \hat{\alpha}(x,t)= \hat{\alpha}(F(x,t)).$$
\end{itemize} 
\end{proposition}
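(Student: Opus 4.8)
The plan is to prove both items by direct differentiation, reducing item B to the formal solution of the twisted cohomological equation already studied in Part 1. Writing $v_t=\partial_t f_t$, at any $(x,t)$ where $f_t^n$ is smooth at $x$ one has
$$DF^n(x,t)=\begin{pmatrix} Df_t^n(x) & \partial_t f_t^n(x)\\ 0 & 1\end{pmatrix},$$
so $DF(x,t)$ sends the horizontal direction $\mathbb{R}\times\{0\}$ to the horizontal direction at $F(x,t)$ by multiplication by $Df_t(x)$.

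For item A: $F$ preserves each slice $I\times\{t\}$ and restricts to $f_t$ on it, so the horizontal foliation $\{I\times\{t\}\}$ is $F$-invariant with tangent bundle $E^u_{(x,t)}=\mathbb{R}\times\{0\}$, and $DF(x,t)$ maps $E^u_{(x,t)}$ onto $E^u_{F(x,t)}$. Since $t\mapsto f_t$ is continuous into $\mathcal{B}^k(C)$, the function $t\mapsto\inf_x|Df_t(x)|$ is continuous on the compact interval $[0,1]$ and takes values $>1$ (because $f_t\in\mathcal{B}^k_{exp}(C)$), hence $\theta:=\inf_{t\in[0,1]}\inf_x|Df_t(x)|>1$ and $|D_xF(x,t)|=|Df_t(x)|\ge\theta$.

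For item B I would set
$$S=\{(x,t)\in I\times[0,1]\colon\ f_t^j(x)\notin C\ \text{for every}\ j\ge0\}.$$
Then $(I\times\{t\})\setminus S=\bigcup_{j\ge0}f_t^{-j}(C)\times\{t\}$ is countable (each $f_t^{-j}(C)$ is finite, $f_t$ having finitely many monotone branches), and $S$ is Borel since $(x,t)\mapsto f_t^j(x)$ is Borel. On $S$ define $\hat\alpha(x,t)=(\alpha_t(x),1)$ with
$$\alpha_t(x)=-\sum_{j=0}^{\infty}\frac{v_t(f_t^j(x))}{Df_t^{j+1}(x)},$$
the formal solution of the twisted cohomological equation. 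Because $|Df_t^{j+1}(x)|\ge\theta^{j+1}$ and $\sup_{t\in[0,1]}|v_t|_{L^\infty}<\infty$ (continuity of $t\mapsto v_t$ into $\mathcal{B}^k(C)$ together with compactness), the series converges absolutely and uniformly on $S$, so $\hat\alpha$ is bounded; and since at any $(x_0,t_0)\in S$ the iterates $f_t^j$ are, on a neighbourhood of $(x_0,t_0)$, fixed compositions of $C^k$ branches, every partial sum is continuous there, whence $\hat\alpha$ is continuous on $S$ by the uniform tail bound. Finally $F(S)\subseteq S$, and the identity $Df_t^{j+1}(f_t(x))=Df_t^{j+2}(x)/Df_t(x)$ gives, after reindexing, $\alpha_t(f_t(x))=Df_t(x)\alpha_t(x)+v_t(x)$; hence $v_t=\alpha_t\circ f_t-Df_t\,\alpha_t$ on $S$ and
$$DF(x,t)\cdot\hat\alpha(x,t)=(Df_t(x)\alpha_t(x)+v_t(x),\,1)=(\alpha_t(f_t(x)),\,1)=\hat\alpha(F(x,t)).$$
Declaring $E^c_{(x,t)}:=\langle\hat\alpha(x,t)\rangle$, this is a line transverse to $E^u_{(x,t)}$; moreover $DF^n(x,t)\hat\alpha(x,t)=\hat\alpha(F^n(x,t))$ remains bounded while on $E^u$ the derivative $DF^n(x,t)$ expands by $|Df_t^n(x)|\ge\theta^n$, so $E^c$ is dominated by $E^u$, which justifies the term partially hyperbolic.

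This proposition is in essence a repackaging of computations already performed, so no step is genuinely difficult; the point demanding care is that $S$ is not open, so that continuity of $\hat\alpha$ must be obtained from uniform convergence of the defining series rather than from each branch being defined on an open set — and one should check the measurability of $S$. When $f_t$ happens to be a deformation, $\alpha_t$ is additionally the continuous, indeed Log-Lipschitz, solution furnished by Theorem \ref{infc}, but no such hypothesis enters here.
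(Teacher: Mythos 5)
Your proposal is correct and follows essentially the same route as the paper: Statement A by inspection, and for B the same set $S$, the same formal series $\alpha_t(x)=-\sum_j v_t(f_t^j(x))/Df_t^{j+1}(x)$, and the same verification $DF\cdot\hat\alpha=\hat\alpha\circ F$. You merely supply details the paper labels obvious (uniform expansion constant $\theta$ from compactness of $[0,1]$, countability of $S^c\cap(I\times\{t\})$, continuity via locally constant branches plus a uniform tail bound); you also correct a small typo in the paper's final matrix identity, where the left side should multiply $(\alpha_t(x),1)$ rather than $(\alpha_t(f_t(x)),1)$ and the result should be $\hat\alpha(F(x,t))$.
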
 
\begin{proof} Statement $A$ is obvious. Let
$$S=\{(x,t)\colon f_t^i(x) \text{ is well-defined and} f^i(x)\not\in C \text{ for every $i\geq 0$}\}.$$
Let 
$$\alpha_t(x)= -\sum_{i=0}^\infty  \frac{v_t(f_t^i(x))}{Df_t^{i+1}(x)}$$
for $(x,t)\in S$, where $v_t=\partial_t f_t$.  It is easy to see that 
$$(x,t)\mapsto \alpha_t(x)$$
is continuous and bounded function.  Note that 
$$v_t(x) =\alpha_t(f_t(x))-Df_t(x)\alpha_t(x)$$
for $(x,t)\in S$. Define
$$\hat{\alpha}(x)= \begin{bmatrix}
\alpha_t(x)\\
1
\end{bmatrix}.$$
We have
$$DF(x,t)\cdot \hat{\alpha}(x,t)= \begin{bmatrix}
Df_t(x) & v_t(x)  \\
0 & 1
\end{bmatrix}\cdot \begin{bmatrix}
\alpha_t(f_t(x))\\
1
\end{bmatrix}=  \begin{bmatrix}
\alpha_t(x)\\
1
\end{bmatrix}=  \hat{\alpha}(x,t).$$
\end{proof} 

\begin{proposition} Let $F$ be as in Proposition \ref{ph}, and assume that  $f_t$ is Lasota-Yorke stable. Then $E^c$ extends to a continuous distribution on $I\times [0,1]$ if and only if $f_t$ is a deformation, that is, $f_t$ is topologically conjugate with $f_0$ for every $t$. 
\end{proposition}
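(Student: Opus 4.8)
The plan is to recognize ``$E^c$ extends to a continuous distribution on $I\times[0,1]$'' as being equivalent to ``the formal infinitesimal conjugacy $(x,t)\mapsto\alpha_t(x)$ extends to a jointly continuous function on $I\times[0,1]$'', and then to run this through Theorem \ref{tt} and Theorem \ref{char}. First I would reformulate: by Proposition \ref{ph}, on the set $S$ — which meets each slice $I\times\{t\}$ in a co-countable, hence dense, subset — we have $E^c_{(x,t)}=\langle(\alpha_t(x),1)\rangle$ with $\alpha_t(x)=-\sum_{i\ge 0}v_t(f_t^i(x))/Df_t^{i+1}(x)$, and $\hat\alpha=(\alpha_t,1)$ bounded. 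Since the spanning vector has second coordinate $1$, $E^c$ is nowhere horizontal on $S$, and by boundedness of $\hat\alpha$ the same is true of any continuous extension; thus $E^c$ extends to a continuous distribution on $I\times[0,1]$ if and only if $(x,t)\mapsto\alpha_t(x)$ extends to a continuous function $\bar\alpha\colon I\times[0,1]\to\mathbb R$. I will work with this reformulation.

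For the easy direction, suppose $f_t$ is a deformation. Since $f_t$ is Lasota--Yorke stable, Theorem \ref{char} (equivalence of $B$ and $E$) provides, for each $t$, a Log-Lipschitz solution $\alpha_t$ of $v_t=\alpha_t\circ f_t-Df_t\cdot\alpha_t$ with $(x,t)\mapsto\alpha_t(x)$ continuous and $\sup_{t\in J}|\alpha_t|_{LL}<\infty$ on compacts. Iterating the twisted cohomological equation gives $\alpha_t(x)=-\sum_{j=0}^{k-1}v_t(f_t^j(x))/Df_t^{j+1}(x)+\alpha_t(f_t^k(x))/Df_t^k(x)$, and the remainder tends to $0$ because $f_t$ is expanding and $\alpha_t$ is bounded; so on $S$ this $\alpha_t$ equals the formal series. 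Hence $\bar\alpha$ exists and $E^c$ is a continuous distribution.

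For the converse, assume $\bar\alpha$ exists. Fix $t$: on each branch interval $(c_i,c_{i+1})$ both sides of $v_t(x)=\bar\alpha(f_t(x),t)-Df_t(x)\bar\alpha(x,t)$ are continuous in $x$ and agree on the dense set $S\cap(I\times\{t\})$, hence everywhere on $(c_i,c_{i+1})$ and, by lateral limits, on $\hat I$; so $\alpha_t:=\bar\alpha(\cdot,t)$ is a continuous solution of the twisted cohomological equation for $(f_t,v_t)$. Now I would apply Theorem \ref{tt} to $(f_t,v_t)$: condition $A$ holds (with $v_t\in\mathcal B^{1+\beta}(C)$), so condition $B$ gives the representation $\alpha_t=H_t+G_t+\int 1_{[a,x]}\big(\sum_n\sum_{i<p}\phi_t\circ f_t^{np+i}\big)\,dm$ with $H_t,G_t$ Lipschitz and the last term the primitive of a Birkhoff sum in the distributional sense, which is Log-Lipschitz by the companion paper. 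Thus each $\alpha_t$ is Log-Lipschitz, with constants controlled by the (uniform) Lasota--Yorke constants, by $\sup_t p(f_t)<\infty$ (finite by Lemma \ref{stable}, since $\{f_t\}_{t\in[0,1]}$ is compact), and by $\sup_t|f_t|_2,\sup_t|v_t|_1$; so $\sup_{t\in[0,1]}|\alpha_t|_{LL}<\infty$ and $(x,t)\mapsto\alpha_t(x)$ is continuous. With this uniform bound the Osgood condition holds, so the ODE $\dot h_t=\alpha_t\circ h_t$, $h_0=\mathrm{id}$, is uniquely integrable (Proposition \ref{chemin}). Since the twisted cohomological equation makes both $t\mapsto h_t(f_0(x))$ and $t\mapsto f_t(h_t(x))$ solve $\dot y=\alpha_t(y)$ with $y(0)=f_0(x)$, unique integrability forces $h_t\circ f_0=f_t\circ h_t$, and $h_t$ is an orientation-preserving homeomorphism of $[a,b]$; hence $f_t$ is a deformation.

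The genuinely technical point is entirely in this last implication, and is twofold. First, one must verify that Theorem \ref{tt} is applicable to $(f_t,v_t,\alpha_t)$ \emph{without} assuming in advance that $f_t$ is a deformation: the only input is continuity of $\alpha_t$, which is exactly what $\bar\alpha$ supplies, and here the regularity threshold $f_t\in\mathcal B^{2+\beta}_{exp}(C)$ must be imposed. Second, as in Baladi and S. \cite{smooth}, one must check that $h_t\circ f_0=f_t\circ h_t$ persists as integral curves cross breakpoints, and that the flow genuinely preserves $[a,b]$ (equivalently, that the boundary values $\alpha_t(a),\alpha_t(b)$ are compatible with the invariance of $I$) — a step where one uses that $f_t\in\mathcal B^k_{exp}(C)$ for the entire parameter interval, not merely for a single $t$. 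I expect this to be the main obstacle, with the rest being bookkeeping with the previously established theorems.
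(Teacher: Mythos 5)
Your proposal is essentially the paper's argument: reduce continuity of $E^c$ to continuity of $(x,t)\mapsto\alpha_t(x)$ (using that the spanning vector $\hat\alpha=(\alpha_t,1)$ has nonvanishing second component), then invoke Theorem \ref{char}'s characterization of deformations. The paper's own proof states exactly this in two sentences; your version spells it out, with a detour through Theorem \ref{tt} in the converse direction that merely re-derives what Theorem \ref{char}'s equivalence of $B$ and $E$ already packages (Log-Lipschitz regularity, Osgood uniqueness, integrating $\dot h_t=\alpha_t\circ h_t$). One caveat worth recording, which both you and the paper's terse proof leave implicit: Theorem \ref{char} E requires the continuous solution to satisfy $\alpha_t(c)=0$ for every $c\in C$ (equivalently $J(f_t,c,\partial_s f_s|_{s=t})=0$), and it is not automatic that a continuous extension of the formal series $\alpha_t(x)=-\sum_i v_t(f_t^i(x))/Df_t^{i+1}(x)$ carries this normalization — you correctly flag this (the compatibility of $\alpha_t(a),\alpha_t(b)$ with invariance of $I$ and the behavior across breakpoints) as the genuine technical point, but leave it as ``the main obstacle'' rather than closing it, e.g., by combining uniqueness of the bounded continuous solution of the twisted cohomological equation with the continuity analysis in Theorem \ref{infc}.
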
 
\begin{proof} $E^c$ has as continuous extension to  $I\times [0,1]$ if and only if $\alpha_t$ has a continuous extension $\alpha_t\colon I\rightarrow I$ for every $t$. But it is equivalent  to $f_t$ be a deformation by the characterization of deformations by Theorem \ref{char}.
\end{proof} 

\begin{proposition} \label{clt} Let $F$ be as in Proposition \ref{ph}, and assume that  $f_t$ is a deformation. Then for every $t$ the function 
$$x\mapsto \hat{\alpha}(x,t)=(\alpha_t(x),1)$$
is Log-Lipschitz continuous. The central direction is uniquely integrable and  the holonomy between unstable manifolds  trough the central  lamination  are H\"older.  Moreover there are examples such that for every $t$ the function
$$x\mapsto \alpha_t(x)$$
satisfies a Central Limit Theorem for its modulus of continuity. 
\end{proposition}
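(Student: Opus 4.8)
The first three assertions are the two-dimensional restatement of Theorem \ref{char}, so I would dispose of them first. As $f_t$ is a deformation, Theorem \ref{char} (equivalence of $B$ and $E$) yields, for each $t$, a Log-Lipschitz solution $\beta_t\colon I\to\mathbb R$ of $\partial_sf_s|_{s=t}=\beta_t\circ f_t-Df_t\cdot\beta_t$ with $\beta_t(c)=0$ on $C$, jointly continuous in $(x,t)$, and with $\sup_{t\in J}|\beta_t|_{LL}<\infty$ on compact $J$. This continuous solution is unique, since a continuous $\gamma$ with $\gamma\circ f_t=Df_t\cdot\gamma$ satisfies $\lambda\|\gamma\|_\infty\le\sup_x|\gamma(f_t(x))|\le\|\gamma\|_\infty$ with $\lambda=\inf_x|Df_t(x)|>1$, forcing $\gamma\equiv0$. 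Iterating the twisted cohomological equation along an orbit contained in $S$ and using $\beta_t(f_t^n(x))/Df_t^n(x)\to0$ shows that on $S$ the series defining $\alpha_t$ in Proposition \ref{ph} coincides with $\beta_t$; hence $x\mapsto\hat\alpha(x,t)=(\alpha_t(x),1)$ is the restriction to $S$ of the Log-Lipschitz, jointly continuous vector field $(\beta_t(x),1)$ on $I\times[0,1]$. This is the first assertion and provides a continuous extension of $E^c$.

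The extended line field is transverse to the unstable lines $I\times\{t\}$, so each of its integral curves is a graph $s\mapsto(y(s),s)$ with $\dot y=\beta_s(y)$, i.e.\ a solution of the ordinary differential equation (\ref{edo3}); by the uniform Log-Lipschitz bound and Proposition \ref{chemin} this equation is uniquely (Osgood) integrable, which is the second assertion. Its leaves are the graphs $t\mapsto(h_t\circ h_{t_0}^{-1}(x_0),t)$, with $h_t$ the flow of (\ref{edo3}), so the holonomy of the central lamination from $I\times\{t_0\}$ to $I\times\{t_1\}$ is the map $x_0\mapsto h_{t_1}\circ h_{t_0}^{-1}(x_0)$, which is H\"older by (\ref{hol1}) and has H\"older inverse by (\ref{hol2}): the third assertion.

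For the last assertion I would combine the distributional representation of $\alpha_t$ with a central limit theorem for Birkhoff sums. Since $v_t=\partial_sf_s|_{s=t}$ is an infinitesimal deformation, Theorem \ref{infc}.C gives
$$\alpha_t(x)=g_t(x)+\int 1_{[c_0,x]}\Big(\sum_{n=0}^{\infty}\sum_{i=0}^{p-1}\phi_t\circ f_t^{np+i}\Big)\,dm,\qquad\phi_t=\frac{Dv_t+D^2f_t\,\alpha_t}{Df_t},$$
with $g_t$ Lipschitz and $\phi_t$ satisfying the analogue of (\ref{kl}). I would build the example in a Markov setting: take $f_0$ piecewise affine, full branched and mixing --- so $D^2f_0\equiv0$ off $C$ and, $f_0$ being mixing, $p(f_t)=1$ and $G_t=0$ for every $t$, with Lebesgue $\mu_0$ the unique mixing absolutely continuous invariant measure --- and use Proposition \ref{flex} to pick $v_0$ with $J(f_0,c,v_0)=0$ for all $c\in\hat C$ (so that $\phi_0$ satisfies (\ref{kl})) but $\sum_{j=0}^{m-1}\phi_0(f_0^j(p))=\Psi_{\mathcal O(p),1}(v_0)\neq0$ for some periodic point $p$ of period $m$. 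The periodic-orbit obstruction then keeps $\phi_0$ from being an $L^2(\mu_0)$-coboundary, so the associated asymptotic variance $\sigma_0^2$ is positive; integrating the continuous field of infinitesimal deformations through $f_0$ (as in the proof of Theorem \ref{top}) produces a deformation $f_t$ for small $t$, along which $t\mapsto\phi_t$ is continuous and hence $\sigma_t^2>0$ persists, so after restricting time and reparametrising we may take the parameter interval to be $[0,1]$. On a generation-$N$ cylinder $J=[a_J,b_J]$ one has $|J|$ decaying exponentially ($-\ln|J|\asymp N$), bounded distortion of $f_t^j$ on $J$ for $j\le N$, and --- annihilating the leading term with (\ref{kl}) --- geometric decay of the contributions of $\phi_t\circ f_t^j$ for $j\ge N$; hence $\alpha_t(b_J)-\alpha_t(a_J)=|J|\sum_{j=0}^{N-1}\phi_t(f_t^j(y_J))+O(|J|)$ for some $y_J\in J$, the error also absorbing $g_t$. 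The central limit theorem for Birkhoff sums (in the distributional form of the companion paper \cite{um}) then gives $N^{-1/2}\sum_{j=0}^{N-1}\phi_t\circ f_t^j\overset{d}{\longrightarrow}\mathcal N(0,\sigma_t^2)$ with $\sigma_t^2>0$, and since $-\ln|J|\asymp N$,
$$\frac{\alpha_t(b_J)-\alpha_t(a_J)}{|J|\,\sqrt{-\ln|J|}}\ \overset{d}{\longrightarrow}\ \mathcal N(0,\tilde\sigma_t^2),\qquad\tilde\sigma_t^2>0,$$
with $J$ a $\mu_t$-random cylinder of generation $N\to\infty$; this is a central limit theorem for the modulus of continuity of $\alpha_t$, valid for every $t$.

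The genuinely hard step is this last transfer: passing from a central limit theorem for sums over a \emph{fixed} number of iterates to a statement about the distributional primitive $\alpha_t$ integrated over the random, shrinking interval $J$ --- controlling the fluctuation of the generation $N$ when the interval is not exactly a cylinder, the two boundary contributions, and the coupling with the Lipschitz remainder $g_t$ --- together with checking the non-degeneracy $\sigma_t^2>0$ for an explicit family and uniformly in $t$; these are precisely the ingredients provided by \cite{um}.
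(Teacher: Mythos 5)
Your proposal is correct and takes essentially the same route as the paper: the paper's proof is a one-line citation of Theorems \ref{infc}, \ref{char}, and a theorem from the companion paper \cite{um}, and your argument is exactly the unpacking of those citations — Theorem \ref{char}.E gives the Log-Lipschitz $\beta_t$ with uniform bounds and the flow estimates (\ref{hol1})--(\ref{hol2}) which you correctly identify as Hölder continuity of the central holonomies, Proposition \ref{chemin}/Osgood gives unique integrability, and the CLT for the modulus of continuity rests (as you acknowledge) on the distributional CLT and representation results from \cite{um} combined with the representation in Theorem \ref{infc}.C. Your additional details (the uniqueness of the continuous solution via expansion, the identification of $\alpha_t$ with $\beta_t$ on $S$, and the explicit piecewise-affine construction with Proposition \ref{flex} to secure positive asymptotic variance) are sound and make precise what the paper's terse citation leaves implicit.
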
 
\begin{proof} This follows from Theorems \ref{infc} and \ref{char} and   G.R. and  S. \cite[Theorem \ref{sum-cite}]{um}.
\end{proof} 

\begin{proposition} Choose a deformation $f_t$ such that for every support  of an ergodic absolutely continuous  $f_0$-invariant probability $\mu_0$  there is a $f_0$-periodic point $q$, $f_0^M(q)=q$  in the support  $\mu_0$ such that 
$$t\mapsto |Df_t^M(h_t(q))|$$
is injective where $h_t$ is the conjugacy  between $f_0$ and $f_t$. Then the center foliation $W^c$ of $F$ is a Fubini's nightmare.  Indeed, there is a subset $A$ of positive Lebesgue measure in $I\times [0,1]$ such that $A\cap I\times \{t\}$ is an atomic set for every $t\in [0,1]$. 
\end{proposition}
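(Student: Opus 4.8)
The plan is to realise the pathology through the foliation chart of the center lamination and to reduce everything to the \emph{singularity of the center holonomies}. Recall from Proposition~\ref{ph} and Theorem~\ref{char} that $F(x,t)=(f_t(x),t)$ is partially hyperbolic, that the center leaf through $(x,0)$ is the graph $W^c(x,0)=\{(h_s(x),s):s\in[0,1]\}$, where $h_s\colon I\to I$ is the conjugacy $h_s\circ f_0=f_s\circ h_s$ with $h_s|_C=\mathrm{id}$, and that $(x,s)\mapsto h_s(x)$ is continuous. Thus the center holonomy from $I\times\{0\}$ to $I\times\{s\}$ is precisely $h_s$, and $\Psi(x,s)=(h_s(x),s)$ is a homeomorphism carrying $W^c$ onto the trivial foliation of $I\times[0,1]$ by verticals. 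Everything will follow once I show that for every $s\in(0,1]$ both $h_s$ and $h_s^{-1}$ are \emph{totally singular}: each carries some Borel set of full $1$-dimensional Lebesgue measure onto an $m$-null set; equivalently $\Psi$ and $\Psi^{-1}$ each map some conull set onto a null set.

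To establish this I would use Shub--Sullivan rigidity. Let $\mu_0^{(1)},\dots,\mu_0^{(r)}$ be the (finitely many, by \cite{bg}) ergodic absolutely continuous invariant probabilities of $f_0$; their basins cover $I$ up to an $m$-null set and periodic points are dense in each $\mathrm{supp}\,\mu_0^{(j)}$ (\cite{mddg}, cf.\ Lemma~\ref{pre}). Fix $t\in(0,1]$ and $j$, and let $q_j\in\mathrm{supp}\,\mu_0^{(j)}$, $f_0^{M_j}(q_j)=q_j$, be the periodic point supplied by the hypothesis, chosen (as we may) off the closure of the forward critical orbit, so that the relevant BV densities are bounded away from $0$ and $\infty$ near it. If $(h_t)_*\mu_0^{(j)}$ were absolutely continuous, being ergodic and $f_t$-invariant it would be an a.c.i.p.\ of $f_t$, so $h_t$ would conjugate $\mu_0^{(j)}$ to it; comparability of both densities with $m$ near $q_j$ would make $h_t$ bi-Lipschitz --- hence $C^1$, by Shub--Sullivan \cite{ss} (cf.\ \cite{mm}) applied on the interval components of $\mathrm{supp}\,\mu_0^{(j)}$, where periodic points are dense --- near $q_j$, and therefore multiplier-preserving there; but $t\mapsto|Df_t^{M_j}(h_t(q_j))|$ is injective and so differs from $|Df_0^{M_j}(q_j)|$, a contradiction. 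Hence $(h_t)_*\mu_0^{(j)}\perp m$ for all $j$; an $m$-null common Borel carrier $Y_t$ of these measures gives $m(I\setminus h_t^{-1}(Y_t))=0$ and $m(h_t(h_t^{-1}(Y_t)))=0$, i.e.\ $h_t$ is totally singular. Running the identical argument for $h_t^{-1}$ (a conjugacy $f_t\to f_0$, equally failing to be $C^1$ at $q_j$) yields a Borel $W_t\subseteq I$ with $m(W_t)=0$ and $m(h_t(W_t))=m(I)$; by the joint continuity of $h$ one may take $(x,t)\mapsto\mathbf{1}_{W_t}(x)$ jointly Borel on $I\times(0,1]$, and, replacing each $W_t$ by a Borel subset whose $h_t$-image is still conull, a measurable selection renders the family $\{W_t\}_{t\in(0,1]}$ essentially pairwise disjoint.

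Then I would put $S=\bigcup_{t\in(0,1]}W_t\times\{t\}$ and let $A=\Psi(S)=\bigcup_{t\in(0,1]}h_t(W_t)\times\{t\}$, a Borel subset of $I\times[0,1]$. By Fubini its $2$-dimensional Lebesgue measure equals $\int_0^1 m(h_t(W_t))\,dt=\int_0^1 m(I)\,dt=m(I)$, the full $2$-dimensional measure of $I\times[0,1]$, so $A$ has full (in particular positive) measure, while $S=\Psi^{-1}(A)$ is $2$-dimensionally null since $\int_0^1 m(W_t)\,dt=0$. Consequently $A$ meets the center leaf $W^c(x,0)$ --- the $\Psi$-image of the vertical over $x$ --- in $\{(h_t(x),t):x\in W_t\}$, and $\int_I m(\{t:x\in W_t\})\,dm(x)=\int_0^1 m(W_t)\,dt=0$ shows that for $m$-a.e.\ $x$ this trace has zero length along the leaf; with the $W_t$ pairwise disjoint it reduces to (at most) one point. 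Thus $A$ is conull and its intersection with every leaf of $W^c$ is atomic: the disintegration of $2$-dimensional Lebesgue measure along $W^c$ has atomic (one-point) conditionals, which is the Fubini's nightmare asserted, the trace ``$A\cap I\times\{t\}$'' being read through the holonomy identification of each leaf with the $t$-axis. If instead a non-measurable $A$ of positive outer measure with genuinely atomic horizontal slices is wanted, the same total singularity of $\Psi$ in both directions feeds a Katok-type transfinite construction of a one-point-per-leaf section of positive outer Lebesgue measure.

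The main obstacle is the rigidity step: one must upgrade Shub--Sullivan \cite{ss}, stated for expanding maps of the circle, to the piecewise expanding setting \emph{localized} to the interval components of $\mathrm{supp}\,\mu_0^{(j)}$, checking there that a conjugacy which is absolutely continuous --- indeed bi-Lipschitz, forced by the BV invariant densities being comparable to $m$ near $q_j$ --- is $C^1$ and hence preserves the multiplier of $q_j$; this is exactly where the injectivity of $t\mapsto|Df_t^{M_j}(h_t(q_j))|$ enters, and choosing $q_j$ away from the forward critical orbit is part of making the densities well behaved. The second, purely measure-theoretic, difficulty is the disjointification of the family $\{W_t\}_{t\in(0,1]}$ while keeping it jointly Borel in $(x,t)$ --- needed to pass from ``zero length on a.e.\ leaf'' to ``atomic on a.e.\ leaf'' --- and, in the non-measurable variant, the execution of the Katok-type selection; both rely on the freedom to shrink each $W_t$ to a Borel subset with still-conull $h_t$-image together with the joint continuity of $h$ from Theorem~\ref{char}. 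If one settles for the (already nightmarish) conclusion that $A$ is conull and meets almost every center leaf in a set of zero one-dimensional measure, the disjointification is unnecessary.
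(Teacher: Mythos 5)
Your first half --- showing that absolute continuity of a holonomy would force preservation of the multiplier at $q_j$ and hence contradict the injectivity hypothesis --- is the right idea and is close to the paper's argument. But note that the paper does not pass through Shub--Sullivan: from the bi-Lipschitz conjugacy $h$ it writes $\log|Df_{t_2}\circ h|-\log|Df_{t_1}|=\log|Dh\circ f_{t_1}|-\log|Dh|$ with $\log|Dh|\in L^\infty$ only, and then invokes its Liv\v{s}ic-type theorem for $L^\infty$ transfer functions from the companion paper \cite{um} to equate the periodic Birkhoff sums. This matters: $Dh$ exists only $m$-a.e., so you cannot evaluate the coboundary identity on the (null) periodic orbit, and the ``upgrade'' of \cite{ss} to piecewise expanding maps restricted to supports that you flag as your main obstacle is genuinely needed if you insist on that route. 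Also, to rule out \emph{two} points of the same center leaf at times $t_1\neq t_2$ (not just $t$ versus $0$) you must run the rigidity argument for the holonomy $h_{t_2}\circ h_{t_1}^{-1}$; the hypothesis is stated exactly so that this works for all pairs.

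The genuine gap is in the construction of $A$. Your sets $W_t$ are abstract null carriers, and the step ``a measurable selection renders the family $\{W_t\}_{t\in(0,1]}$ essentially pairwise disjoint'' while keeping $\int_0^1 m(h_t(W_t))\,dt>0$ is not justified and does not follow from measure theory alone: shrinking each $W_t$ to $W_t'=\{x:\ t(x)=t\}$ for a selection $x\mapsto t(x)$ gives no lower bound on $m(h_t(W_t'))$, and your Fubini computation only controls the \emph{vertical} slices. Without disjointness, your own estimate $\int_I m(\{t: x\in W_t\})\,dm(x)=0$ yields only that a.e.\ leaf meets $A$ in a set of zero one-dimensional measure --- which could still be an uncountable Cantor set, i.e.\ strictly weaker than the asserted atomicity. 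The paper sidesteps this entirely by \emph{defining} $A$ dynamically: $(x,t)\in A$ iff $x$ is typical for some ergodic a.c.i.p.\ of $f_t$. This $A$ is automatically conull by the ergodic theorem, and the rigidity argument shows directly that two typical points on the same center leaf would force the intermediate holonomy to carry one a.c.i.p.\ to another (their supports being finite unions of intervals with densities bounded above and below), hence to be bi-Lipschitz, hence to preserve periodic multipliers --- a contradiction. So each leaf contains at most one point of $A$, with no selection or disjointification needed. You should replace your carriers $W_t$ by this dynamically defined set.
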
 
\begin{proof} This is quite  similar to the Katok's example (see Milnor \cite{milnorp}). Let $A$ defined in the following way. A point $(x,t)$ belongs to $A$ if and only if $x$ is typical with respect to some ergodic absolutely continuous invariant probability $\mu_{x,t}$ of $f_t$.  $A$ has full Lebesgue measure on $I\times [0,1]$.

Note first that $S$ is the support of an ergodic absolutely continuous  $f_0$-invariant probability $\mu_0$ if and only if $h_t(S)$ is the support of an ergodic absolutely continuous  $f_t$-invariant probability $\mu_t$. Indeed by Boyarsky and  G\'{o}ra \cite{bg} the set $S$ is a $f_0$-invariant  finite union of intervals, so $h_t(S)$ is a $f_t$-invariant  finite union of intervals. That implies that $h_t(S)$ contain the support of  an ergodic absolutely continuous  $f_t$-invariant probability $\mu_t$. If we exchange  the roles of $0$ and $t$ we conclude that $h_t^{-1}(supp \ \mu_t)\subset S$ is $f_0$-invariant a finite union of intervals and the ergodicity of $\mu_0$ implies $h^{-1}(supp \ \mu_t)=S$.

Suppose that $(x_{1},t_{1}), (x_{2},t_{2})$, with $t_0\neq t_1$, belongs to $A\cap \gamma$, where $\gamma$ is a center leave. That means that there is $(x_0,0)$ such that $x_i=h_{t_i}(x_0)$, with $i=1,2$. Let $h= h_{t_1}\circ h_{t_0}^{-1}$ be the conjugacy  between $f_{t_0}$ and $f_{t_1}$. Then $h(x_0)=x_1$.  Note  that  the typically of $x_i$ imply 
$$\overline{\mathcal{O}}^+(x_i)= supp \ \mu_{x_i,t_i}=S_i$$
 so $h(supp \   \mu_{x_1,t_1})= supp \   \mu_{x_2,t_2}$ and
 $$\mu_{x_2,t_2}(h(B))=\mu_{x_1,t_1}(B)$$
 for every borelian set $B$.  Indeed the support $S_i$  is a finite  union of intervals and, since $\mu_{x_i,t_i} =\ \rho_i m$, where $\rho_i$ has a positive upper and lower bound on $S_i$ we conclude that $h$ is absolutely continuous with respect to the Lebesgue measure  in $S_1$ and in fact a bi-Lipschitz  function.  In particular
 $$\log |Df_{t_2}(h(x))| - log |Df_{t_1}(h(x))|= \log |Dh(f_{t_1}(x))|-  \log |Dh(x)|.$$
The left hand side is a piecewise Lipschitz function and $ \log |Dh(x)|\in L^\infty(S_1)$. G.R. and S. \cite[Theorem \ref{sum-lipsc}]{um} implies
 $$\sum_{j=0}^{M-1}\log |Df_{t_2}(f_{t_2}^j(h_{t_2}(q)))| = \sum_{j=0}^{M-1}\log |Df_{t_1}(f_{t_1}^j(h_1(q)))|$$
 for every $f_0$ periodic point $q$  in the support  of an  absolutely continuous ergodic probability of $f_0$. This is not possible.
\end{proof} 

The following is an immediate consequence of the results on deformation of piecewise expanding maps
\begin{proposition} We have
\begin{itemize}
\item[A.]  For every $f_0$ there are examples of deformations $f_t$ for which the holonomies between unstable leaves through the central  lamination are not absolutely continuous, and the central lamination is a Fubini's nightmare. 
\item[B] For every $f_0$ there are examples of deformations $f_t$ for which the holonomies between unstable leaves  through the central  lamination are quasisymmetric. However they are not absolutely continuous, and  the central lamination is a Fubini's nightmare. 
\end{itemize} 
\end{proposition}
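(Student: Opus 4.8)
The plan is to produce the required deformations explicitly, using the flexibility of periodic multipliers (Proposition~\ref{flex}), and then to feed them into the Fubini's nightmare proposition proved just above, whose own proof already contains the argument that the relevant holonomies fail to be absolutely continuous.

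\emph{Part A, construction.} Let $f_0\in\mathcal B^k_{exp}(C)$. By the Lasota--Yorke inequality $f_0$ has only finitely many ergodic absolutely continuous invariant probabilities $\mu_1,\dots,\mu_r$; their supports $S_1,\dots,S_r$ are finite unions of intervals with pairwise disjoint interiors (Boyarsky--G\'ora \cite{bg}), and periodic points are dense in each $S_j$ (as in the proof of Lemma~\ref{pre}). I would pick $f_0$-periodic points $q_j\in\mathrm{int}(S_j)$ of periods $M_j$, lying in pairwise distinct orbits, all disjoint from $\bigcup_{i\ge 0}f_0^i(\hat C)$. Recall that the tangent space of $\mathcal T(f_0)$ at $f_0$ is $\{v\in\mathcal B^k(C):J(f_0,c,v)=0\text{ for all }c\in\hat C\}$ (Theorems~\ref{infc} and~\ref{char}). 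By Proposition~\ref{flex} the family $\{\Psi_{\mathcal O(q_j),1}\}_{j\le r}\cup\{J(f_0,c,\cdot)\}_{c\in\hat C}$ is linearly independent, so the functionals $\Psi_{\mathcal O(q_j),1}$ map that tangent space onto $\mathbb R^r$; hence I can take an infinitesimal deformation $v$ with $J(f_0,c,v)=0$ for all $c$ and $\Psi_{\mathcal O(q_j),1}(v)\neq 0$ for every $j$, and, by Theorem~\ref{infc} together with the integration argument from the proofs of Theorems~\ref{char} and~\ref{top} (using that $f_0$ is locally Lasota--Yorke stable, so the family stays Lasota--Yorke stable for small $t$), integrate it to a smooth deformation $f_t$, $t\in[0,1]$, with $\partial_t f_t|_{t=0}=v$.

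\emph{Part A, verification.} Let $h_t$ be the conjugacy, $h_t\circ f_0=f_t\circ h_t$, $h_t(c)=c$. By the computation of Section~\ref{lya} and the remark following Proposition~\ref{flex},
$$\partial_t\log|Df_t^{M_j}(h_t(q_j))|\big|_{t=0}=\Psi_{\mathcal O(q_j),1}(v)\neq 0,$$
so each $t\mapsto|Df_t^{M_j}(h_t(q_j))|$ is $C^1$ with nonvanishing derivative near $0$, hence injective there; since there are finitely many $j$ I shrink $[0,1]$ so that all of them are injective on $[0,1]$. Because $q_j\in\mathrm{int}(S_j)$ and $S_j=\mathrm{supp}\,\mu_j$, the hypotheses of the Fubini's nightmare proposition hold, so the central lamination of $F(x,t)=(f_t(x),t)$ is a Fubini's nightmare; and the argument in that proof shows that an absolutely continuous holonomy $x\mapsto h_{t_1}\circ h_{t_0}^{-1}(x)$ with $t_0\neq t_1$ would be bi-Lipschitz on $h_{t_0}(S_j)$ and would therefore preserve the periodic multiplier there, contradicting the injectivity just obtained. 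Thus the holonomies are not absolutely continuous, proving Part A.

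\emph{Part B and the main difficulty.} Here I would run the same scheme inside the quasisymmetric class instead of all of $\mathcal T(f_0)$. When $f_0$ satisfies Assumption FOorMC, Theorem~\ref{lami} exhibits the quasisymmetric class $M_{f_0}$ as an embedded submanifold of $\mathcal T(f_0)$ whose tangent space at $f_0$ is cut out by finitely many linear conditions on $v$ involving only the (pre)periodic critical orbits; since the $q_j$ were chosen in orbits disjoint from those, Proposition~\ref{flex} again gives that $\{\Psi_{\mathcal O(q_j),1}\}$ maps $T_{f_0}M_{f_0}$ onto $\mathbb R^r$, so I can choose $v\in T_{f_0}M_{f_0}$ with $\Psi_{\mathcal O(q_j),1}(v)\neq0$ and integrate it to a deformation $f_t$ staying in $M_{f_0}$. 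For such $f_t$ the function $\alpha_t$ is uniformly Zygmund on compact $t$-intervals (G.R.\ and S.\ \cite[Theorem~\ref{sum-vvv}]{um}, exactly as in the proof of Theorem~\ref{quasi3}, $B\Rightarrow C$), so the holonomies $h_{t_1}\circ h_{t_0}^{-1}$ are quasisymmetric with distortion $1+O(|t_1-t_0|)$ by Reimann \cite{reimann}; the periodic multipliers at the $q_j$ still vary, so Part A applies verbatim. (If $f_0$ has no eventually periodic critical point one does not need Theorem~\ref{lami}: $\alpha_t$ is then automatically Zygmund and the Part A deformation already works.) The main obstacle is precisely this apparent conflict in Part B — quasisymmetry forces cancellations at the critical (pre)periodic orbits, a rigidity-type constraint, while the Fubini's nightmare property requires the periodic multipliers inside the absolutely continuous supports to move, a flexibility-type phenomenon — and the point that has to be checked carefully is that the two requirements constrain disjoint families of orbits, so that Proposition~\ref{flex} makes them compatible.
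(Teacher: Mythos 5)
Your proposal is essentially the argument that the paper has in mind: the paper offers no explicit proof, only the remark that the statement is an ``immediate consequence of the results on deformation of piecewise expanding maps,'' and your construction (pick periodic points $q_j$ in the supports of the a.c.i.p.'s, off the critical orbit, use Proposition~\ref{flex} to produce an infinitesimal deformation moving the multipliers, integrate via Theorem~\ref{top}/Theorem~\ref{char}, then feed the resulting family into the Fubini's nightmare proposition) is precisely how to assemble those results. Your identification of the main tension in Part~B --- that quasisymmetry is a rigidity constraint living on the critical (pre)periodic orbits while the nightmare is a flexibility phenomenon living on orbits inside the a.c.i.p.\ supports, and that Proposition~\ref{flex} reconciles them because the two families of orbits are disjoint --- is exactly the right observation and is what makes the proof work.

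Two caveats you should flag more explicitly, since they are implicit hypotheses hiding in ``for every $f_0$.'' First, everything needs $f_0$ to be locally Lasota--Yorke stable (you use Theorem~\ref{top} to integrate the infinitesimal deformation, and the Fubini's nightmare proposition is stated for deformations, which via Theorem~\ref{char} already requires Lasota--Yorke stability of the family); you mention this in passing but it belongs in the standing hypotheses. Second, in Part~B the use of Theorem~\ref{lami} and of Theorem~\ref{quasi3}~($B\Rightarrow C$) needs Assumption~FOorMC for $f_0$, and your parenthetical escape --- ``if $f_0$ has no eventually periodic critical point then $\alpha_t$ is automatically Zygmund'' --- is not quite right: absence of Type~I critical points removes the balance constraint~(\ref{no}) but does \emph{not} remove the remaining FOorMC requirements (the uniform distance of critical orbits from $\hat C$ and the Misiurewicz/continuity condition), and the Zygmund estimate in the proof of Theorem~\ref{quasi3} is obtained under the full Assumption~FOorMC, not merely from the Log-Lipschitz regularity and the representation of Theorem~\ref{tt}. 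So Part~B of the proposition, as you prove it, should be read with the additional hypothesis that $f_0$ satisfies FOorMC; this is a gap in the paper's own phrasing rather than a flaw in your reasoning, but the proof should say so.

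One small technical remark: you invoke the ``argument in that proof'' of the Fubini's nightmare proposition to conclude non--absolute-continuity of the holonomy. What that proof literally shows is that two $A$-points on the same center leaf with distinct $t$-coordinates lead to a bi-Lipschitz conjugacy on the a.c.i.p.\ supports, hence equal multipliers; the step from ``the full-measure set $A$ is atomic on center leaves'' to ``the holonomies $h_{t_1}\circ h_{t_0}^{-1}$ are not absolutely continuous'' needs the extra (standard, but worth stating) observation that an absolutely continuous holonomy would carry a positive-measure subset of $A\cap(I\times\{t_0\})$ into $A\cap(I\times\{t_1\})$, producing two $A$-points on a common center leaf. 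With that sentence added, your argument is complete.
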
 

\begin{remark} Pathological invariant foliations, as foliations with atomic decomposition,  seem to be  ubiquitous   in partially hyperbolic dynamics, and have been intensively studied by many authors. See  Shub and  Wilkinson \cite{sw}, Ruelle  and Wilkinson \cite{rw}, Hirayama and  Pesin \cite{hp}, Homburg \cite{h}, Gogolev and  Tahzibi \cite{gt} and Avila, Viana and Wilkinson \cite{avw}. 

Examples similar to Katok's example  as those in this section are  quite special cases. However statistical properties of the distribution of the central direction similar to Proposition \ref{clt} does not seem to appear in the previous literature. One may wonder if similar statistical properties hold  for  more general classes of partially hyperbolic maps, and if they can help to understand  their dynamics.
\end{remark}

\section{Pressure pseudo-metric on the topological class}

Once we know that the topological class $\mathcal{T}(f_0)$ of a piecewise expanding map $f_0$ is  a Banach manifold, one may ask if there is an interesting, {\it dynamically defined}  riemannian (pseudo-)metric on  $\mathcal{T}(f_0)$. The work of McMullen \cite{mct} on the characterisation via thermodynamical formalism of the Weil–Petersson metric on the Teichm\"uller  space (and its generalisations for Blaschke products) suggest that a "nice" dynamically-defined pseudo-metric would be the {\it pressure pseudo-metric} 
$$<v_1,v_2>_{E^h(f_0)}=\sigma(\frac{Dv_1+D^2f\cdot\alpha_1}{Df},\frac{Dv_2+D^2f\cdot\alpha_2}{Df}).$$
where $\sigma$ is the hermitian form
$$\sigma(\phi_1,\phi_2)=  \lim_{N \rightarrow \infty}  \int \Big( \frac{\sum_{i=0}^{N-1} \phi_1\circ f^i }{\sqrt{N}}   \Big)\Big( \frac{\sum_{i=0}^{N-1} \overline{\phi}_2 \circ f^i }{\sqrt{N}}   \Big)   \ dm,$$
that due G.R. and S. \cite{um} is well defined for every pair $(\phi_1,\phi_2)\in \mathcal{B}^\beta(C)$ such that 
$$\int \phi_i \Phi_1(\gamma) \ dm =0$$
for every $\gamma\in BV$, $i=1,2$, and $v_i=\alpha_i\circ f -Df\cdot \alpha_i$, where $\alpha_i$ are Log-Lipschitz. Note that $m$ does not need to be $f$-invariant.  One must compare this with 
Giulietti,  Kloeckner,  Lopes,  and Marcon \cite{giu}, a study  of thermodynamical formalism in a geometric framework.   See also Pollicott  and Sharp \cite{ps} and Bridgeman, Canary and  Sambarino \cite{brig} and the Weil-Petersson metric  in the infinite-dimensional Teichm\"uler space in  Takhtajan and  Teo \cite{tt}.

There are many interesting questions one can ask on this pseudo-metric. We give a result  that follows immediately from our results on Birkhoff sums as distributions and deformations.

\begin{proposition} Let $w\in E^h(f)$ and define
$$\Theta(v)= <v,w>_{E^h(f)}.$$
Let $$\phi= \frac{Dw+D^2f\cdot\alpha}{Df}.$$
The  following statements are equivalent
\begin{itemize}
\item[A.]  $\Theta$ is a signed measure.
\item[B.] $\phi=\psi\circ f -\psi$, where $\psi\in L^2(m)$ and $\psi\in L^\infty(S_\ell)$ for every $\ell\leq E$.
\item[C.]  $\Theta=0$.
\end{itemize}
Moreover $A.-C.$ implies
\begin{itemize}
\item[D.]  We have that 
\begin{equation}\label{inficoddd} \sum_{j=0}^{M-1} \phi(f^j(q))=0\end{equation} 
holds for every $M$ and $q\in \hat{S}_\ell$, with $\ell\leq E$,  such that $f^M(q)=q$.
\end{itemize} 
Furthermore if $f$ is markovian, $p(f)=1$ and it has an absolutely continuous ergodic invariant probability whose support is $I$ then $D.$ is equivalent to $A.-C.$
\end{proposition}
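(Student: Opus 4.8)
\emph{Plan and reductions.} The idea is to reduce all four items to a single asymptotic variance and then argue in the order $C\Leftrightarrow[\sigma(\phi,\phi)=0]$, then $B\Leftrightarrow C$, then $A\Leftrightarrow C$, and finally the relation with $D$. For $v\in E^h(f)$ let $\alpha_v$ be the continuous solution of $v=\alpha_v\circ f-Df\cdot\alpha_v$ (Theorem~\ref{infc}) and $\phi_v=(Dv+D^{2}f\cdot\alpha_v)/Df$ as in \eqref{phi}; by Theorem~\ref{tt} this $\phi_v$ satisfies $\int\phi_v\,\Phi_1(\gamma)\,dm=0$ for every $\gamma\in BV$, so $\sigma(\phi_v,\cdot)$ is defined and $\Theta(v)=\sigma(\phi_v,\phi)$. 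Taking $v=w$, and since $\phi=\phi_w$, we get $\Theta(w)=\sigma(\phi,\phi)=:s$. Because $\sigma$ is a positive semidefinite Hermitian form, Cauchy--Schwarz gives $|\Theta(v)|^{2}\le\sigma(\phi_v,\phi_v)\,s$ for all $v\in E^h(f)$; hence $s=0$ forces $\Theta\equiv0$, while $\Theta\equiv0$ forces $\Theta(w)=s=0$. Thus $C$ is equivalent to $s=0$, and $C\Rightarrow A$ is trivial.

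\emph{$B\Leftrightarrow C$.} If $\phi=\psi\circ f-\psi$ with $\psi\in L^{2}(m)$, then $\sum_{i<N}\phi\circ f^{i}=\psi\circ f^{N}-\psi$, and since $\|g\circ f^{N}\|_{L^{2}(m)}^{2}=\int|g|^{2}\,L^{N}1\,dm\le\big(\sup_{N}\|L^{N}1\|_{L^{\infty}}\big)\|g\|_{L^{2}(m)}^{2}$ with the supremum finite by the Lasota--Yorke inequality and the spectral decomposition of Section~\ref{layo}, the partial Birkhoff sums of $\phi$ stay bounded in $L^{2}(m)$; hence $s=\lim_{N}\frac1N\|\sum_{i<N}\phi\circ f^{i}\|_{L^{2}(m)}^{2}=0$, i.e.\ $B\Rightarrow C$. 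For $C\Rightarrow B$ one combines $s=0$ with the degenerate central limit dichotomy for piecewise expanding maps of G.R.\ and S.~\cite{um}: vanishing asymptotic variance of a mean-zero observable of class $\mathcal B^{\beta}(C)$ forces it to be an $L^{2}(m)$-coboundary, and the corrector produced by the spectral gap of the local transfer operator on the support $S_{\ell}$ of each ergodic absolutely continuous invariant probability is bounded there; this gives $\psi\in L^{2}(m)$ with $\psi\in L^{\infty}(S_{\ell})$ for every $\ell\le E$.

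\emph{$A\Leftrightarrow C$ --- the crux.} Only $A\Rightarrow C$ needs work. Through the representation of $\alpha$ in Theorem~\ref{tt} and the covariance (Green--Kubo) expansion of $\sigma$, the functional $\Theta$ on $E^h(f)$ encodes the distribution $\sum_{n}\sum_{i}\phi\circ f^{np+i}$, equivalently the Log-Lipschitz primitive $\int 1_{[a,x]}\sum_{n}\sum_{i}\phi\circ f^{np+i}\,dm$ (which is $\alpha$ up to a Lipschitz term); $\Theta$ is a signed measure exactly when this distribution is, and G.R.\ and S.~\cite{um} show this happens iff $s=0$. The mechanism behind the nontrivial implication is to test $\Theta$ against ``pull-back averaging'' fields: with $\beta^{(n)}=(\alpha_w\circ f^{n})/Df^{n}$, the identities $\beta^{(n)}\circ f-Df\cdot\beta^{(n)}=Df\,(\beta^{(n+1)}-\beta^{(n)})$ and $\beta^{(n+1)}-\beta^{(n)}=(w\circ f^{n})/Df^{n+1}$ show that the field whose infinitesimal deformation is $\tfrac1N\sum_{n<N}\beta^{(n)}$ equals $v_{N}=\tfrac{Df}{N}\big(\tfrac{\alpha_w\circ f^{N}}{Df^{N}}-\alpha_w\big)$, so $\sup_{I}|v_{N}|=O(1/N)$; on the other hand, using $D\alpha_w\circ f-D\alpha_w=\phi$ in the distributional sense and bounded distortion, one checks $[\phi_{v_{N}}]=\tfrac1N\sum_{n<N}[\phi\circ f^{n}]=[\phi]$ for the variance pairing, so morally $\Theta(v_{N})=s$ for every $N$, and comparison with $|\Theta(v_{N})|=O(1/N)$ (which a measure would force) yields $s=0$. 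The genuine obstacle is that $v_{N}$ as written is only piecewise Log-Lipschitz and carries discontinuities on the enlarged critical set $C_{N}\setminus C$, so it is not a tangent vector in $\mathcal B^{k}(C)$; one must pass to the iterate $f^{N}$ (where the construction is legitimate), smooth, and project back into $E^h(f)$ by subtracting a controlled combination of the reference fields $w_c$ of Proposition~\ref{flex}, verifying that both $\sup_{I}|v_{N}|\to0$ and $\Theta(v_{N})\to s$ persist. Doing this bookkeeping with the quantitative Birkhoff-sum estimates of \cite{um} (the Log-Lipschitz bound of Theorem~\ref{tt}, \cite[Theorem~\ref{sum-conv2}]{um}, \cite[Lemma~\ref{sum-vvv}]{um}) is, I expect, the hardest part of the argument.

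\emph{Relation with $D$.} If $B$ holds and $q\in\hat S_{\ell}$, $\ell\le E$, is periodic with $f^{M}(q)=q$, then on $\hat S_{\ell}$ the equation $\phi=\psi\circ f-\psi$ holds with $\psi$ bounded, whence $\sum_{j=0}^{M-1}\phi(f^{j}(q))=\psi(f^{M}(q))-\psi(q)=0$; this is the content of \cite[Theorem~\ref{sum-lipsc}]{um}, so $A$--$C$ imply $D$. Finally, assume $f$ is markovian, $p(f)=1$, and some ergodic absolutely continuous invariant probability has support $I$. Then $f$ is conjugate, through the Markov coding, to a mixing subshift of finite type on which the piecewise $\beta$-Hölder observable $\phi$ lifts to a Hölder function, with periodic orbits in correspondence; by Liv\v sic's theorem, $D$ (vanishing of every periodic Birkhoff sum of $\phi$) gives $\phi=\psi\circ f-\psi$ with $\psi$ Hölder, hence $\psi\in L^{2}(m)\cap L^{\infty}(I)$, which is $B$ (here $E=1$, $S_{1}=I$). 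Together with the preceding equivalences this closes the loop $A\Leftrightarrow B\Leftrightarrow C\Leftrightarrow D$ in this case.
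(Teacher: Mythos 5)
The paper's own proof is a one-line citation to a theorem in the companion work, so all four equivalences and the partial converse under the Markov hypothesis are established there and not re-derived in this paper. Your proposal instead attempts a self-contained reconstruction, and much of the outline is sound: reducing $C$ to the vanishing of $s=\sigma(\phi,\phi)$ via Cauchy--Schwarz is correct, $B\Rightarrow C$ from boundedness of the Birkhoff sums in $L^{2}(m)$ is clean, the Liv\v{s}ic argument for $D\Rightarrow B$ under the mixing Markov hypothesis is the standard route, and $C\Rightarrow B$ via the degenerate-CLT dichotomy from the companion paper is indeed the intended input.

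The genuine gap, which you flag yourself, is $A\Rightarrow C$. The assertion that ``$\Theta$ is a signed measure exactly when the distribution $\sum_{n}\sum_{i}\phi\circ f^{np+i}$ is a signed measure'' is not justified: $\Theta$ is a linear functional on the finite-codimension subspace $E^{h}(f)\subset\mathcal{B}^{k}(C)$, whereas the Birkhoff distribution acts on $BV$ or $C^{\infty}$ test functions on $I$, and the passage between the two pairings goes through the nonlocal, derivative-involving map $v\mapsto\phi_{v}=(Dv+D^{2}f\cdot\alpha_{v})/Df$; one cannot simply transfer the ``is a measure'' property from one to the other. Your pull-back-averaging construction is a plausible heuristic, but, as you note, the candidates $v_{N}=(Df/N)\bigl(\alpha_{w}\circ f^{N}/Df^{N}-\alpha_{w}\bigr)$ lie in $\mathcal{B}^{k}(C_{N})$, not in $\mathcal{B}^{k}(C)$, and no argument is given that a corrected version (after subtracting multiples of the reference fields $w_{c}$ and smoothing) lands back in $E^{h}(f)$ while retaining both $\sup_{I}|v_{N}|=O(1/N)$ and $\Theta(v_{N})\to s$; the second condition is especially delicate because $\phi_{v_{N}}$ depends on $\alpha_{v_{N}}$, which after the correction is no longer a plain average of iterates of $\alpha_{w}$. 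As written, $A\Rightarrow C$ is a sketch rather than a proof, and the proposition is cleanest to handle as the paper does, by invoking the companion theorem where precisely this equivalence is established.
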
 
\begin{proof}  This follows from G.R. and S. \cite[Theorem \ref{sum-xcxc}]{um}. 
\end{proof}

%\nocite{*}
\bibliographystyle{abbrv}
\bibliography{bibliografia}

\begin{thebibliography}{10}

\bibitem{alm}
A.~Avila, M.~Lyubich, and W.~de~Melo.
\newblock Regular or stochastic dynamics in real analytic families of unimodal
  maps.
\newblock {\em Invent. Math.}, 154(3):451--550, 2003.

\bibitem{avw}
A.~Avila, M.~Viana, and A.~Wilkinson.
\newblock Absolute continuity, {L}yapunov exponents and rigidity {I}: geodesic
  flows.
\newblock {\em J. Eur. Math. Soc. (JEMS)}, 17(6):1435--1462, 2015.

\bibitem{lr}
V.~Baladi.
\newblock Linear response, or else.
\newblock In {\em Proceedings of the {I}nternational {C}ongress of
  {M}athematicians---{S}eoul 2014. {V}ol. {III}}, pages 525--545. Kyung Moon
  Sa, Seoul, 2014.

\bibitem{bs0}
V.~Baladi and D.~Smania.
\newblock Linear response formula for piecewise expanding unimodal maps.
\newblock {\em Nonlinearity}, 21(4):677--711, 2008.

\bibitem{smooth}
V.~Baladi and D.~Smania.
\newblock Smooth deformations of piecewise expanding unimodal maps.
\newblock {\em Discrete Contin. Dyn. Syst.}, 23(3):685--703, 2009.

\bibitem{alternative}
V.~Baladi and D.~Smania.
\newblock Alternative proofs of linear response for piecewise expanding
  unimodal maps.
\newblock {\em Ergodic Theory Dynam. Systems}, 30(1):1--20, 2010.

\bibitem{bs3}
V.~Baladi and D.~Smania.
\newblock Linear response for smooth deformations of generic nonuniformly
  hyperbolic unimodal maps.
\newblock {\em Ann. Sci. \'{E}c. Norm. Sup\'{e}r. (4)}, 45(6):861--926 (2013),
  2012.

\bibitem{bg}
A.~Boyarsky and P.~G\'{o}ra.
\newblock {\em Laws of chaos}.
\newblock Probability and its Applications. Birkh\"{a}user Boston, Inc.,
  Boston, MA, 1997.
\newblock Invariant measures and dynamical systems in one dimension.

\bibitem{brig}
M.~Bridgeman, R.~Canary, and A.~Sambarino.
\newblock An introduction to pressure metrics for higher {T}eichm\"{u}ller
  spaces.
\newblock {\em Ergodic Theory Dynam. Systems}, 38(6):2001--2035, 2018.

\bibitem{broise}
A.~Broise.
\newblock Transformations dilatantes de l'intervalle {$[0,1]$} et
  th\'{e}or\`emes limites.
\newblock In {\em Fascicule de probabilit\'{e}s}, volume 1992 of {\em Publ.
  Inst. Rech. Math. Rennes}, page~42. Univ. Rennes I, Rennes, 1992.

\bibitem{buzzi}
J.~Buzzi.
\newblock Private comunication, 2007.

\bibitem{chemin}
J.-Y. Chemin.
\newblock Fluides parfaits incompressibles.
\newblock {\em Ast\'{e}risque}, (230):177, 1995.

\bibitem{cg}
T.~Clark and M.~Gouveia.
\newblock Hyperbolicity of renormalization for dissipative gap mappings.
  {A}rxiv 1907.07630, 2019.

\bibitem{cstt}
T.~Clark and S.~van Strien.
\newblock Conjugacy classes of one-dimensional maps, 2021.
\newblock In preparation.

\bibitem{cst}
T.~Clark, S.~van Strien, and S.~Trejo.
\newblock Complex bounds for real maps.
\newblock {\em Comm. Math. Phys.}, 355(3):1001--1119, 2017.

\bibitem{central}
A.~de~Lima and D.~Smania.
\newblock Central limit theorem for generalized {W}eierstrass functions.
\newblock {\em Stoch. Dyn.}, 19(1):1950002, 18, 2019.

\bibitem{ms}
W.~de~Melo and S.~van Strien.
\newblock {\em One-dimensional dynamics}, volume~25 of {\em Ergebnisse der
  Mathematik und ihrer Grenzgebiete (3) [Results in Mathematics and Related
  Areas (3)]}.
\newblock Springer-Verlag, Berlin, 1993.

\bibitem{fg1}
G.~Farkas and B.~M. Garay.
\newblock A non-differentiability result for the inversion operator between
  {S}obolev spaces.
\newblock {\em Z. Anal. Anwendungen}, 19(3):639--654, 2000.

\bibitem{fg2}
G.~Farkas and B.~M. Garay.
\newblock The operator of inversion as an everywhere continuous nowhere
  differentiable function.
\newblock {\em Results Math.}, 38(3-4):235--260, 2000.

\bibitem{fefferman}
C.~Fefferman.
\newblock {$C^m$} extension by linear operators.
\newblock {\em Ann. of Math. (2)}, 166(3):779--835, 2007.

\bibitem{gh}
F.~P. Gardiner and W.~J. Harvey.
\newblock Universal {T}eichm\"{u}ller space.
\newblock In {\em Handbook of complex analysis: geometric function theory,
  {V}ol. 1}, pages 457--492. North-Holland, Amsterdam, 2002.

\bibitem{gl}
F.~P. Gardiner and N.~Lakic.
\newblock {\em Quasiconformal {T}eichm\"{u}ller theory}, volume~76 of {\em
  Mathematical Surveys and Monographs}.
\newblock American Mathematical Society, Providence, RI, 2000.

\bibitem{giu}
P.~Giulietti, B.~Kloeckner, A.~O. Lopes, and D.~Marcon.
\newblock The calculus of thermodynamical formalism.
\newblock {\em J. Eur. Math. Soc. (JEMS)}, 20(10):2357--2412, 2018.

\bibitem{gt}
A.~Gogolev and A.~Tahzibi.
\newblock Center {L}yapunov exponents in partially hyperbolic dynamics.
\newblock {\em J. Mod. Dyn.}, 8(3-4):549--576, 2014.

\bibitem{nm}
N.~Goncharuk and M.~Yampolsky.
\newblock Analytic linearization of conformal maps of the annulus. {A}rxiv
  2004.05126, 2020.

\bibitem{gs}
J.~Graczyk and G.~\'{S}wiatek.
\newblock Generic hyperbolicity in the logistic family.
\newblock {\em Ann. of Math. (2)}, 146(1):1--52, 1997.

\bibitem{um}
C.~Grotta-Ragazzo and D.~Smania.
\newblock Birkhoff's sums as distributions {I}: {R}egularity. {A}rxiv, 2021.

\bibitem{hp}
M.~Hirayama and Y.~Pesin.
\newblock Non-absolutely continuous foliations.
\newblock {\em Israel J. Math.}, 160:173--187, 2007.

\bibitem{h}
A.~J. Homburg.
\newblock Atomic disintegrations for partially hyperbolic diffeomorphisms.
\newblock {\em Proc. Amer. Math. Soc.}, 145(7):2981--2996, 2017.

\bibitem{kelliher}
J.~P. Kelliher.
\newblock On the flow map for 2{D} {E}uler equations with unbounded vorticity.
\newblock {\em Nonlinearity}, 24(9):2599--2637, 2011.

\bibitem{kss}
O.~Kozlovski, W.~Shen, and S.~van Strien.
\newblock Rigidity for real polynomials.
\newblock {\em Ann. of Math. (2)}, 165(3):749--841, 2007.

\bibitem{ly}
A.~Lasota and J.~A. Yorke.
\newblock On the existence of invariant measures for piecewise monotonic
  transformations.
\newblock {\em Trans. Amer. Math. Soc.}, 186:481--488 (1974), 1973.

\bibitem{lyubich2}
M.~Lyubich.
\newblock Dynamics of quadratic polynomials. {I}, {II}.
\newblock {\em Acta Math.}, 178(2):185--247, 247--297, 1997.

\bibitem{lyubich}
M.~Lyubich.
\newblock Feigenbaum-{C}oullet-{T}resser universality and {M}ilnor's hairiness
  conjecture.
\newblock {\em Ann. of Math. (2)}, 149(2):319--420, 1999.

\bibitem{mss}
R.~Ma\~{n}\'{e}, P.~Sad, and D.~Sullivan.
\newblock On the dynamics of rational maps.
\newblock {\em Ann. Sci. \'{E}cole Norm. Sup. (4)}, 16(2):193--217, 1983.

\bibitem{mddg}
G.~D. Magno, J.~a.~L. Dias, P.~Duarte, and J.~P. Gaiv\~{a}o.
\newblock The attractor of piecewise expanding maps of the interval.
\newblock {\em Stoch. Dyn.}, 20(2):2050009, 19, 2020.

\bibitem{mmy}
S.~Marmi, P.~Moussa, and J.-C. Yoccoz.
\newblock Linearization of generalized interval exchange maps.
\newblock {\em Ann. of Math. (2)}, 176(3):1583--1646, 2012.

\bibitem{mm}
M.~Martens and W.~de~Melo.
\newblock The multipliers of periodic points in one-dimensional dynamics.
\newblock {\em Nonlinearity}, 12(2):217--227, 1999.

\bibitem{mp}
M.~Martens, L.~Palmisano, and B.~Winckler.
\newblock The rigidity conjecture.
\newblock {\em Indagationes Mathematicae}, 29(3):825--830, Jun 2018.

\bibitem{mcmullen}
C.~T. McMullen.
\newblock {\em Renormalization and 3-manifolds which fiber over the circle},
  volume 142 of {\em Annals of Mathematics Studies}.
\newblock Princeton University Press, Princeton, NJ, 1996.

\bibitem{mct}
C.~T. McMullen.
\newblock Thermodynamics, dimension and the {W}eil-{P}etersson metric.
\newblock {\em Invent. Math.}, 173(2):365--425, 2008.

\bibitem{msu}
C.~T. McMullen and D.~P. Sullivan.
\newblock Quasiconformal homeomorphisms and dynamics. {III}. {T}he
  {T}eichm\"{u}ller space of a holomorphic dynamical system.
\newblock {\em Adv. Math.}, 135(2):351--395, 1998.

\bibitem{merrien}
J.~Merrien.
\newblock Prolongateurs de fonctions diff\'{e}rentiables d'une variable
  r\'{e}elle.
\newblock {\em J. Math. Pures Appl. (9)}, 45:291--309, 1966.

\bibitem{milnorp}
J.~Milnor.
\newblock Fubini foiled: {K}atok's paradoxical example in measure theory.
\newblock {\em Math. Intelligencer}, 19(2):30--32, 1997.

\bibitem{osgood}
W.~F. Osgood.
\newblock Beweis der {E}xistenz einer {L}\"{o}sung der {D}ifferentialgleichung
  {$\frac{{dy}}{{dx}} = f\left( {x,y} \right)$} ohne {H}inzunahme der
  {C}auchy-{L}ipschitz'schen {B}edingung.
\newblock {\em Monatsh. Math. Phys.}, 9(1):331--345, 1898.

\bibitem{ps}
M.~Pollicott and R.~Sharp.
\newblock Weil-{P}etersson metrics, {M}anhattan curves and {H}ausdorff
  dimension.
\newblock {\em Math. Z.}, 282(3-4):1007--1016, 2016.

\bibitem{reimann}
H.~M. Reimann.
\newblock Ordinary differential equations and quasiconformal mappings.
\newblock {\em Invent. Math.}, 33(3):247--270, 1976.

\bibitem{parts}
D.~Revuz and M.~Yor.
\newblock {\em Continuous martingales and {B}rownian motion}, volume 293 of
  {\em Grundlehren der Mathematischen Wissenschaften [Fundamental Principles of
  Mathematical Sciences]}.
\newblock Springer-Verlag, Berlin, third edition, 1999.

\bibitem{risler}
E.~Risler.
\newblock Lin\'{e}arisation des perturbations holomorphes des rotations et
  applications.
\newblock {\em M\'{e}m. Soc. Math. Fr. (N.S.)}, (77):viii+102, 1999.

\bibitem{rw}
D.~Ruelle and A.~Wilkinson.
\newblock Absolutely singular dynamical foliations.
\newblock {\em Comm. Math. Phys.}, 219(3):481--487, 2001.

\bibitem{ss}
M.~Shub and D.~Sullivan.
\newblock Expanding endomorphisms of the circle revisited.
\newblock {\em Ergodic Theory Dynam. Systems}, 5(2):285--289, 1985.

\bibitem{sw}
M.~Shub and A.~Wilkinson.
\newblock Pathological foliations and removable zero exponents.
\newblock {\em Invent. Math.}, 139(3):495--508, 2000.

\bibitem{s2}
D.~Sullivan.
\newblock Quasiconformal homeomorphisms and dynamics. {I}. {S}olution of the
  {F}atou-{J}ulia problem on wandering domains.
\newblock {\em Ann. of Math. (2)}, 122(3):401--418, 1985.

\bibitem{sullivan}
D.~Sullivan.
\newblock Bounds, quadratic differentials, and renormalization conjectures.
\newblock In {\em American {M}athematical {S}ociety centennial publications,
  {V}ol. {II} ({P}rovidence, {RI}, 1988)}, pages 417--466. Amer. Math. Soc.,
  Providence, RI, 1992.

\bibitem{tt}
L.~A. Takhtajan and L.-P. Teo.
\newblock Weil-{P}etersson metric on the universal {T}eichm\"{u}ller space.
\newblock {\em Mem. Amer. Math. Soc.}, 183(861):viii+119, 2006.

\bibitem{w}
B.~Winckler.
\newblock The {L}orenz renormalization conjecture.
\newblock {\em Experimental Mathematics}, Online First., 2019.

\end{thebibliography}
%\printbibliography %para o biber

\end{document}